\documentclass[11pt,reqno, letterpaper]{amsart}
\usepackage[
    top=1in, bottom=1in, inner=1in, outer=1in,
    marginparwidth=2cm %necessary to avoid breaking fixme notes.
    ]{geometry}

\usepackage{amssymb,latexsym, amsmath, amsxtra, mathrsfs, bm}
\usepackage[dvips]{graphics}
\usepackage[T1]{fontenc}

\usepackage[all]{xy}
\usepackage{xcolor}
\usepackage{subcaption}
\usepackage{verbatim}
\usepackage[abs]{overpic}
\usepackage{hyperref}
\usepackage{mathtools}
\usepackage{enumitem}

\usepackage{thmtools}
\usepackage{thm-restate}

\DeclareMathAlphabet{\mathbbold}{U}{bbold}{m}{n}

\allowdisplaybreaks[3]

\makeatletter
\def\th@plain{%
	\thm@notefont{}% same as heading font
	\itshape % body font
}
\def\th@definition{%
	\thm@notefont{}% same as heading font
	\normalfont % body font
}
\makeatother
\theoremstyle{plain}
        \newtheorem{theorem}{Theorem}[section]
        \newtheorem*{theorem*}{Theorem}
                \newtheorem{thmx}{Theorem}

        \newtheorem{lemma}[theorem]{Lemma}
        \newtheorem{prop}[theorem]{Proposition}
        \newtheorem{cor}[theorem]{Corollary}

\theoremstyle{definition}
        \newtheorem{definition}[theorem]{Definition}
        \newtheorem{rem}[theorem]{Remark}

\theoremstyle{remark}

\numberwithin{equation}{section}
\numberwithin{theorem}{section}
\numberwithin{table}{section}
\numberwithin{figure}{section}

\renewcommand{\le}{\leqslant}
\renewcommand{\leq}{\leqslant}
\renewcommand{\ge}{\geqslant}
\renewcommand{\geq}{\geqslant}

\newcommand{\diam}  {\operatorname{diam}}

\newcommand{\id} {\operatorname{id}}

\newcommand{\card} {\operatorname{card}}
\newcommand{\supp}{\operatorname{supp}}
\newcommand{\Per}{\mathrm{Per}}
\newcommand{\ve}{\varepsilon}
\newcommand{\R}{\mathbb{R}}
    
\newcommand{\C}{\mathbb{C}}

\newcommand{\N}{\mathbb{N}}      
\newcommand{\Z}{\mathbb{Z}}      
\providecommand{\abs}[1]{\lvert#1\rvert}
\providecommand{\Absbig}[1]{\bigl\lvert#1\bigr\rvert}

\providecommand{\norm}[1]{\|#1\|}
\providecommand{\Normbig}[1]{\bigl\|#1\bigr\|}

\renewcommand{\:}{\colon}

\newcommand{\LIP}{\operatorname{LIP}}

\newcommand{\CCC}{C}

\newcommand{\MMM}{\mathcal{M}}

\newcommand{\Holder}[1] {\CCC^{0,#1}}

\newcommand{\Hseminorm}[2] {\abs{#2}_{#1}}

\newcommand{\Hseminormbig}[2] {\Absbig{#2}_{#1}}

\newcommand{\Hnorm}[2] {\norm{#2}_{#1}}

\newcommand{\Hnormbig}[2] {\Normbig{#2}_{#1}}

\renewcommand{\=}{\coloneqq}

\newcommand{\Mmax}{\MMM_{\operatorname{max}}}

\newcommand{\Lock}{\operatorname{Lock}}

\newcommand{\mpe}{Q}

\newcommand{\cA}{\mathcal{A}}
\newcommand{\cB}{\mathcal{B}}

\newcommand{\cE}{\mathcal{E}}
\newcommand{\cF}{\mathcal{F}}

\newcommand{\cH}{\mathcal{H}}

\newcommand{\cL}{\mathcal{L}}
\newcommand{\cM}{\mathcal{M}}

\newcommand{\cO}{\mathcal{O}}
\newcommand{\cP}{\mathcal{P}}
\newcommand{\cQ}{\mathcal{Q}}
\newcommand{\cR}{\mathcal{R}}

\newcommand{\cV}{\mathcal{V}}
\newcommand{\cW}{\mathcal{W}}

\newcommand{\tF}{\widetilde{F}}

\newcommand{\tpsi}{\widetilde{\psi}}

\begin{document}
	\title[Joint typical periodic optimization: systems with stable hyperbolicity]{
    Joint typical periodic optimization:\\ systems with stable hyperbolicity}
    
	\author{Zelai~Hao \and Yinying~Huang \and Oliver~Jenkinson \and Zhiqiang~Li}
	%\thanks{The authors were partially supported by NSFC Nos.~12471083, 12101017, 12090010, and 12090015.}
	\address{Zelai~Hao, School of Mathematical Sciences, Peking University, Beijing 100871, China}
	\email{2100010625@stu.pku.edu.cn}
	\address{Yinying~Huang, School of Mathematical Sciences, Peking University, Beijing 100871, China}
	\email{miaoyan@stu.pku.edu.cn}
	\address{Oliver~Jenkinson, School of Mathematical Sciences, Queen Mary, University of London, Mile End Road, London E1 4NS, United Kingdom}
	\email{o.jenkinson@qmul.ac.uk}
	\address{Zhiqiang~Li, School of Mathematical Sciences \& Beijing International Center for Mathematical Research, Peking University, Beijing 100871, China}
	\email{zli@math.pku.edu.cn}

	\subjclass[2020]{Primary: 37A99; Secondary: 37A05, 37D20, 37D35, 37E05, 37A44.}
	
	\keywords{Typical periodic optimization, ergodic optimization, maximizing measure, Axiom A diffeomorphism, hyperbolic rational map, real quadratic polynomial, logistic family.}

	\thanks{Zelai~Hao, Yinying~Huang, and Zhiqiang~Li were partially supported by Beijing Natural Science Foundation (JQ25001) and National Natural Science Foundation of China (12471083).
	}

	\begin{abstract}
		The framework of joint typical periodic optimization, in which both the
dynamical system and the potential function are allowed to vary
simultaneously, was introduced in~\cite{HHJL25},
in a direction motivated by the work of Yang, Hunt \& Ott~\cite{YHO00}.
For certain classes of hyperbolic systems, it was shown there that
optimizing periodic orbits persist under simultaneous perturbation,
yielding joint locking sets that contain open dense subsets of the
relevant product spaces.
In the present article we broaden the scope of this theory,
by developing an axiomatic joint perturbation framework that
accommodates a wider class of stably hyperbolic systems, and by
establishing new joint typical periodic optimization results for
several natural and important families: Axiom~A diffeomorphisms with
the no-cycle property, hyperbolic rational maps on the Riemann sphere,
real quadratic polynomials, and $C^r$ maps in one dimension.
	\end{abstract}
	
	\maketitle

    \setcounter{tocdepth}{1}
	\tableofcontents

\section{Introduction}\label{introduction}

\subsection{Background}

Ergodic optimization is the study of those probability measures, invariant under some dynamical system, that optimize the space average of a given potential function.
For $(X,d)$ a compact metric space, and $f \: X \to X$ a 
continuous self-map,  let $\cM(X,f)$ denote the space of \emph{$f$-invariant Borel probability measures}.
For  a continuous function $\phi \: X \to \R$,
the corresponding \emph{maximum ergodic average}  is defined as
\begin{equation}\label{eq:mea}
	\mpe(X,f,\phi) \= \sup \biggl\{ \int\! \phi \, \mathrm{d}\mu : \mu\in \cM(X,f) \biggr\}.
\end{equation}
A measure $\mu\in \cM(X,f)$ that 
attains the maximum ergodic average (\ref{eq:mea})
is called \emph{$(f,\phi)$-maximizing}, 
and  the set of $(f,\phi)$-maximizing measures is denoted by $\Mmax(X,f,\phi)$, in other words
\begin{equation*}
	\Mmax(X,f,\phi) \= \biggl\{ \mu\in \cM(X,f) : \int\! \phi \, \mathrm{d}\mu = \mpe(X,f,\phi) \biggr\}.
\end{equation*}
For simplicity we write $\mpe(f,\phi)$ and $\Mmax(f,\phi)$ whenever there is no possibility of confusion regarding the space $X$.

We say that $(f,\phi)$ has the \emph{periodic optimization property} (or \emph{PO property}) if there exists a unique $(f,\phi)$-maximizing measure, and this measure is supported on a single $f$-periodic orbit. 
If $\cP$ is a Banach space consisting of continuous real-valued functions defined on $X$, 
we say that $(f,\cP)$ has the \emph{typical periodic optimization property} (or \emph{TPO property}) if there exists an open dense subset $V(f)$ of $\cP$ such that for every $\phi\in V(f)$, the pair $(f,\phi)$ has 
the PO property. 
For $\phi\in \cP$, we say that $(f,\phi)$ has the \emph{locking property} if $(f,\phi)$ has the PO property and $\Mmax(f,\psi)=\Mmax(f,\phi)$ for all $\psi\in\cP$ sufficiently close to $\phi$. 
We write
\begin{equation*}
	\Lock(f,\cP) \= \{\phi\in \cP : (f,\phi) \text{ has the locking property}\},
\end{equation*} 
and call $\Lock(f,\cP)$ the \emph{locking set} of $(f,\cP)$.

If $\cF$ is a topological space consisting of 
self-maps of $X$, and $\cF \times \cP$ 
is equipped with the product topology, we say that $\cF\times \cP$ has the 
\emph{joint typical periodic optimization property}
(or \emph{Joint TPO property}) 
if there exists an open dense subset $U$ of $\cF\times \cP$ such that every pair $(f,\phi)\in U$ has 
the PO property. We write
\begin{equation*}
	\Lock(\cF, \cP) \= \{(f,\phi)\in \cF\times \cP: (f,\phi) \text{ has the locking property}\}
\end{equation*}
and refer to $\Lock(\cF,\cP)$ as the \emph{joint locking set} of $(\cF,\cP)$.

The field of ergodic optimization was significantly influenced by a 1999 conjecture of Yuan \& Hunt \cite{YH99}, that 
$(f,\cP)$ has the TPO property
whenever $f$
is uniformly hyperbolic (an expanding map or an Axiom A diffeomorphism), 
and $\cP$ is the space of Lipschitz functions.
After important work towards this conjecture
(see \cite{Bou01, Bou08, BQ07, CLT01, Mo08, QS12}), it was eventually 
resolved by Contreras \cite{Co16} in the case of expanding maps,
and by Huang, Lian, Ma, Xu \& Zhang \cite{HLMXZ25}
for Axiom A diffeomorphisms.
Going beyond uniform hyperbolicity, Li \& Zhang \cite{LZ25} established TPO in the complex dynamics setting of expanding Thurston maps. We refer the reader to surveys \cite{Boc18, Je06, Je19} for more detailed discussion on ergodic optimization.

In a direction suggested by the work of Yang, Hunt \& Ott~\cite{YHO00}, and by perspectives highlighted in the survey~\cite{Je19},
the notion of joint typical periodic optimization
was introduced in \cite{HHJL25}, 
where, for $\cP$ any Banach space of $\alpha$-H\"older functions,
Joint TPO was established for $\cF$ taken to be each of 
(i) the class of open
distance-expanding Lipschitz maps
on a locally connected space, (ii)
the class of Anosov diffeomorphisms on a compact smooth manifold equipped with a Riemannian metric, and (iii)
the class of beta-transformations on the unit interval.

The present article builds on the joint perturbation framework
of~\cite{HHJL25}, extending the scope of Joint TPO to various
families $\cF$ of maps enjoying some form of hyperbolicity.
A central theme is that Joint TPO can be established for maps
satisfying an appropriately abstract form of \emph{stable hyperbolicity},
via a broad axiomatic joint perturbation framework, allowing us
to deduce Joint TPO for several notable specific classes of hyperbolic
dynamical systems.

This axiomatic strategy consists of the following four steps:

\smallskip

(1) Formulate abstract hyperbolic assumptions ensuring some structural stability of invariant sets, uniform hyperbolicity, and a Ma\~n\'e cohomology lemma with uniform seminorm control.

\smallskip

(2) Prove a joint perturbation theorem showing that if a periodic orbit is maximizing for $(f, \phi)$, then nearby pairs 
$(g,\psi)$ admit nearby periodic maximizing orbits.

\smallskip

(3) Articulate a sufficient condition for a map-function pair to lie in the interior of the joint locking set.

\smallskip

(4) Verify that joint typical periodic optimization 
does indeed hold for the various specific classes of hyperbolic systems considered.

\smallskip

In particular, by proceeding along these lines, we succeed in extending the joint typical periodic optimization theory of \cite{HHJL25} to (i) the class of Axiom A diffeomorphisms with the no-cycle property, (ii) the class of hyperbolic rational maps  on the Riemann sphere,
(iii) the class of real
quadratic polynomials on the Riemann sphere, (iv)
certain classes of real one-dimensional smooth systems,
and (v)  the logistic family on $[0,1]$.

\subsection{Main results}\label{mainresultssubsection}

Throughout the article, we denote by 
$M$
a compact smooth manifold (without boundary), with Riemannian metric $\Hseminorm{}{\,\cdot\,}$ on $M$, and $d$ the induced distance function. 

For $\alpha\in(0,1]$, let $\Holder{\alpha}(M,\R)$ denote the Banach
space of real-valued $\alpha$-H\"{o}lder functions on $M$.
Let $C^1(M,\R)$ denote the Banach space of real-valued $C^1$ functions
on $M$, equipped with the norm
$\Hnorm{C^1,M}{\phi} \= \Hnorm{\infty,M}{\phi} + \Hnorm{\infty,M}{\mathrm{D}\phi}$,
where $\mathrm{D}\phi$ is the derivative of $\phi$.

Recall that a diffeomorphism $f\:M\to M$ is \emph{Axiom~A} if its
nonwandering set $\Omega(f)$ is uniformly hyperbolic (i.e., admitting a
$\mathrm{D}f$-invariant splitting of the tangent bundle over $\Omega(f)$ into
uniformly contracting stable and uniformly expanding unstable
subbundles) and has dense periodic points.
By Smale's spectral decomposition theorem \cite{Sm67},
$\Omega(f)$ decomposes uniquely into finitely many disjoint compact
transitive pieces, the \emph{basic sets}, and $f$ satisfies
the \emph{no-cycle condition} if there is no cyclic chain, via intersections of stable and unstable
manifolds, among the basic sets.
The set of Axiom~A diffeomorphisms with the no-cycle condition is precisely the 
set of diffeomorphisms satisfying $\Omega$-stability 
(see \cite{Pa70, Pa87,Sm70}).

For each $r\in \N$, let $\operatorname{Diff}^r(M)$  denote the space of $C^r$ diffeomorphisms on $M$, 
equipped
with the $C^r$ topology, and 
define $\cA^r(M)$ to be the subspace of $\operatorname{Diff}^r(M)$ consisting of
those Axiom~A diffeomorphisms with the no-cycle property.
We prove:

\begin{thmx}[\bf Joint TPO for Axiom A diffeomorphisms with no cycles]\label{JTPO AxiomA} 
	Let $r\in \N$, $\alpha\in (0,1]$, and $\cP$ denote either $C^1(M,\R)$ or $\Holder{\alpha}(M,\R)$.
    Then $\cA^r(M) \times \cP$ has the Joint TPO property. Moreover, the joint locking set $\Lock(\cA^r(M), \cP)$ is itself an open dense subset of $\cA^r(M)\times \cP$.
\end{thmx}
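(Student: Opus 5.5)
Since $\Lock(\cA^r(M),\cP)$ is contained in the set of pairs with the PO property, it suffices to prove that the joint locking set is open and dense in $\cA^r(M)\times\cP$. The first assertion of Theorem~\ref{JTPO AxiomA} then follows by taking $U=\Lock(\cA^r(M),\cP)$. The plan is to reduce everything to two ingredients: the single-map TPO/locking theory for Axiom~A diffeomorphisms (Huang--Lian--Ma--Xu--Zhang \cite{HLMXZ25}, together with the standard fact that $\Lock(f,\cP)$ is open and dense for fixed uniformly hyperbolic $f$ when $\cP$ is $C^1$ or H\"older), and a joint perturbation theorem. The latter will come from $\Omega$-stability. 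Since $\cA^r(M)$ is open in $\operatorname{Diff}^r(M)$ and every $f\in\cA^r(M)$ is $\Omega$-stable, for $g$ $C^r$-close to $f$ there is a homeomorphism $h_g\colon\Omega(f)\to\Omega(g)$ conjugating $f|_{\Omega(f)}$ to $g|_{\Omega(g)}$, with $h_g\to\id$ in $C^0$ as $g\to f$. Moreover, hyperbolicity constants, local product structure and the expansivity/shadowing constants on $\Omega(g)$ are uniform in $g$. Every invariant measure of $g$ is supported on $\Omega(g)$, so $\mpe(g,\psi)=\mpe(g|_{\Omega(g)},\psi|_{\Omega(g)})$.

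\emph{Density.} Let $(f,\phi)$ be given. By the single-map theory, $\Lock(f,\cP)$ is dense in $\cP$, so we may perturb $\phi$ slightly to some $\phi'$ with $(f,\phi')$ locking, maximized on a periodic orbit $\cO$. It then remains to prove openness at locking pairs, and density follows.

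\emph{Openness.} Fix $(f,\phi)\in\Lock(\cA^r(M),\cP)$ with maximizing orbit $\cO\subset\Omega(f)$. I would first upgrade locking to a quantitative gap. Using a Ma\~n\'e cohomology lemma (revealed sub-action) on $\Omega(f)$, we may write $\phi\le\mpe(f,\phi)+u\circ f-u$ on $\Omega(f)$, with equality only near $\cO$. After subtracting a small bump $\ve\,d(\cdot,\cO)^{\alpha}$ (or a $C^1$ analogue), which is still within the locking neighbourhood, we obtain a strict inequality
\[
\phi\le\mpe(f,\phi)+u\circ f-u-\ve\,d(\cdot,\cO)^{\alpha}\quad\text{on }\Omega(f).
\]
Transport this through $h_g$. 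For $(g,\psi)$ near $(f,\phi)$, the continuation $\cO_g=h_g(\cO)$ is a hyperbolic periodic orbit of $g$, and $\psi\circ h_g$ is close to $\phi$ on $\Omega(f)$ in sup norm. The difficulty is that $h_g$ is only H\"older, with a H\"older exponent degrading slightly, so $\psi\circ h_g$ is only H\"older-close in a weaker sense. This is where the argument needs the Ma\~n\'e lemma \emph{with uniform seminorm control}: for $g$ near $f$, and for $\psi$ in a bounded H\"older/$C^1$ set, one obtains sub-actions $u_{g,\psi}$ on $\Omega(g)$ with $\Hseminorm{\beta}{u_{g,\psi}}$ bounded uniformly, for a fixed $\beta\le\alpha$. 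This uniformity comes from the uniform hyperbolicity and local product structure constants.

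With that in hand, a Bressaud--Quas/Contreras-type closing argument shows that any $(g,\psi)$-maximizing measure must be supported on $\cO_g$. The argument runs as follows. Any point of $\operatorname{supp}\mu$ not on $\cO_g$ spends definite time away from $\cO_g$. A long segment of its orbit that returns close to itself can be shadowed, via uniform shadowing in $\Omega(g)$, by a periodic orbit. Using the uniform seminorm bound and the strict gap $\ve$, which survives transport through $h_g$ as a gap $\ve/2$ for small perturbations, the average of $\psi$ along that orbit would then be strictly less than its average along $\cO_g$. Uniqueness holds because $\cO_g$ is a single orbit. Since the same holds for all $(g',\psi')$ near $(g,\psi)$ within a fixed neighbourhood, the maximizing measures vary only by continuation. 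To get genuine locking at $(g,\psi)$, that is $\Mmax(g,\psi')=\Mmax(g,\psi)$ for $\psi'$ near $\psi$, note that with $g$ fixed the orbit $\cO_g$ is the same set. Hence every pair in a neighbourhood of $(f,\phi)$ is locking.

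I expect the main obstacle to be the uniform Ma\~n\'e lemma across the family $\{g\}$, when $\Omega(g)$ has several basic sets and non-trivial wandering dynamics in between. Each basic set is handled separately, with uniform constants by $\Omega$-stability. Maximizing measures live on a single basic set's ergodic components, and comparing across the finitely many basic sets only requires continuity of $g\mapsto\mpe(g|_{\Lambda_i(g)},\psi)$. For the $C^1$ case I would reuse the H\"older argument with $\alpha=1$, since $C^1\subset\Holder{1}$ continuously and the bump perturbation can be chosen smooth.
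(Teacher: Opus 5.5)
Your reduction and the density step are fine and match the paper: $(f,\phi)\in\Lock(\cA^r(M),\cP)$ precisely when $\phi\in\Lock(f,\cP)$, so density follows from Proposition~\ref{TPO AxiomA} alone, without needing openness.

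\textbf{The gap: openness near $\cO_g$.} You assert that the strict gap $\ve\,d(\cdot,\cO)^{\alpha}$ ``survives transport through $h_g$ as a gap $\ve/2$''. But that gap vanishes on $\cO$. The errors incurred in passing from $(f,\phi)$ to $(g,\psi)$ are of sup-norm type and do not vanish near $\cO_g$: a term $\Hnorm{\infty}{\psi-\phi}$, and a term of order $\Hseminorm{\alpha}{\phi}\,d_g^{\alpha}$ coming from $d_\infty(h_g,\id)$ or $d_\infty(i_g,\id)$. Both are paid at every iterate, so the gap dominates only at a definite distance from $\cO_g$.

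The dangerous competitors are $g$-orbits that follow $\cO_g$ for $N$ periods and then make one excursion. They lose only a bounded amount to the gap per excursion, while a naive comparison accumulates errors linearly in $N$. Your phrase ``spends definite time away from $\cO_g$'' is not uniform over competitors, and shadowing a recurrent segment by a periodic orbit and comparing averages does not address this.

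What is missing is a two-sided exponential estimate, uniform in $g$, for segments of $\Omega(g)$-orbits that stay near $\cO_g$. This is condition (RHE); for Axiom~A that uniformity is the real content of Lemma~\ref{A(b)}, not a formality. It is used to compare Birkhoff sums of $\phi-\mpe(f,\phi)+u-u\circ g$ along such a segment directly with those along $\cO_g$ itself:
\begin{itemize}
\item along $\cO_g$ the sum equals $m$ times the $\mu_{\cO_g}$-average, up to one period's error;
\item the two sums differ by at most a H\"older seminorm times $\rho^{\alpha}$, independently of the segment length;
\item the penalty at the exit point then beats this (Cases~A--C in the proof of Theorem~\ref{jointperturbation}).
\end{itemize}
With your fixed gap this can be made to work: fix the entry radius $\rho$ first, then shrink the neighbourhood of $(f,\phi)$. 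The paper instead uses a penalty of size $d_g^{\alpha/2}$ and absorbs it with the locking neighbourhood of $\phi$ in Theorem~\ref{openness}.

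\textbf{The obstacle you single out is not needed.} A Ma\~n\'e lemma with seminorm control uniform in $g$ is unnecessary. Your sketch also never explains how the contact set of $u_{g,\psi}$ would be tied to the gap, which lives on a sub-action of $f$. It suffices to extend the sub-action $u$ of $f$ to $M$ by McShane and use it for $g$. From $i_g\circ g=f\circ i_g$ and $d_\infty(i_g,\id)\le d_g$, one gets that $\phi-\mpe(f,\phi)+u-u\circ g$ is bounded above on $\Omega(g)$ by a constant times $\Hseminorm{\alpha}{\phi}\,d_g^{\alpha}$. Only $C^0$-closeness of the conjugacy enters, so the H\"older regularity of $h_g$ that worries you is irrelevant.

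\textbf{A sign slip.} To obtain your displayed inequality you must apply the Ma\~n\'e lemma to $\phi+\ve\,d(\cdot,\cO)^{\alpha}$, i.e.\ add the bump. This is legitimate because for small $\ve$ it lies in the locking neighbourhood and has the same maximum ergodic average. Subtracting the bump gives the inequality with the wrong sign.
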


Note that for every $r\in \N$, the space $\cA^r(M)$ is second countable (see e.g.~\cite[p.~35]{Hi76}), so  combining Theorem~\ref{JTPO AxiomA} and \cite[Lemma~8.42]{Ke95} gives:

\begin{cor}\label{cor}
	For any $r\in \N$ and $\alpha\in (0,1]$, there exists a residual subset $H$ of $\Holder{\alpha}(M,\R)$ (resp.\ $C^1(M,\R)$) such that for each $\phi \in H$, there is an open dense subset $V(\phi)$ of $\cA^r(M)$ such that if $f\in  V(\phi)$ then $(f,\phi)$ has
    the periodic optimization property. 
\end{cor}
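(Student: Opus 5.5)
The plan is to derive the corollary from Theorem~\ref{JTPO AxiomA} by a Kuratowski--Ulam type argument, in the form given by \cite[Lemma~8.42]{Ke95}. Write $\cP$ for $\Holder{\alpha}(M,\R)$ or $C^1(M,\R)$, and set $U \= \Lock(\cA^r(M),\cP)$. By Theorem~\ref{JTPO AxiomA}, $U$ is open and dense in $\cA^r(M)\times\cP$, and every $(f,\phi)\in U$ has the PO property. For $\phi\in\cP$, the natural candidate is the vertical section
\begin{equation*}
V(\phi)\=\{f\in\cA^r(M) : (f,\phi)\in U\}.
\end{equation*}
This set is automatically open in $\cA^r(M)$, because it is the preimage of the open set $U$ under the continuous map $f\mapsto(f,\phi)$. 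It therefore suffices to show that the set $H$ of those $\phi$ for which $V(\phi)$ is dense in $\cA^r(M)$ is residual in $\cP$. Once this is done, each $f\in V(\phi)$ gives $(f,\phi)\in U$, so $(f,\phi)$ has the PO property.

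To get residuality I would use the statement of \cite[Lemma~8.42]{Ke95}: if $Y$ is second countable and $A\subseteq X\times Y$ is open dense, then $\{x : A_x \text{ is dense in } Y\}$ is comeager in $X$. I apply it with $X=\cP$, $Y=\cA^r(M)$ (second countable, as noted before the corollary), and $A$ equal to $U$ with coordinates swapped. Swapping coordinates is a homeomorphism, so $A$ is still open and dense. The hypothesis of the lemma needs no Baire property of $X$, only second countability of $Y$.

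For completeness, the proof runs as follows. Let $\{W_n\}$ be a countable base of nonempty open sets of $\cA^r(M)$. Put
\begin{equation*}
G_n\=\{\phi\in\cP: \exists\, f\in W_n,\ (f,\phi)\in U\},
\end{equation*}
which is the projection of the open set $U\cap(W_n\times\cP)$ onto $\cP$.
\begin{itemize}
\item Projections are open maps, so $G_n$ is open.
\item $G_n$ is dense: for any nonempty open $O\subseteq\cP$, the set $W_n\times O$ meets the dense set $U$.
\end{itemize}
Hence $H=\bigcap_n G_n$ is a countable intersection of open dense sets, so it is residual. For $\phi\in H$, the section $V(\phi)$ meets every $W_n$, so it is dense.

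The main point requiring care is not technical. It is only to make sure the roles of the factors are set up correctly: second countability is needed for the factor in which the sections are required to be dense, namely $\cA^r(M)$. This is exactly the fact supplied by \cite{Hi76}. Openness of $V(\phi)$ uses only that $U$ is open, which is the "moreover" part of Theorem~\ref{JTPO AxiomA}; mere Joint TPO with some open dense set of PO pairs would serve equally well.
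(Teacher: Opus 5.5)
Your proposal is correct and takes the same route as the paper, which deduces the corollary in one line from Theorem~\ref{JTPO AxiomA}, the second countability of $\cA^r(M)$ \cite{Hi76}, and \cite[Lemma~8.42]{Ke95}. You have simply written out the proof of that lemma, correctly placing the second-countable factor $\cA^r(M)$ as the space in which the sections must be dense.
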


Beyond the class of Axiom~A diffeomorphisms with the no-cycle property, our methods can be applied to various other classes of hyperbolic dynamical systems. 
One such class consists of hyperbolic rational maps on the Riemann sphere $\widehat{\C}$. For $m\in \N$, let $\cR^m$ denote the collection of rational maps $f \: \widehat{\C} \to \widehat{\C}$ of degree $m$, equipped with the standard topology on parameter space
(which is embedded homeomorphically as the complement of an algebraic hypersurface in $(2m+1)$-dimensional complex projective space $\C \mathbb{P}^{2m+1}$,
cf.~e.g.~\cite[p.~47]{Be91}, \cite[Appendix B]{Mi93}, \cite{Se79}). For every $f\in \cR^m$, let $J(f)$ denote the \emph{Julia set} of $f$ (cf.~\cite[Definition~4.2]{Mi06}). If $m\ge 2$, a map $f\in \cR^m$ is said to be \emph{hyperbolic} if it is expanding on its Julia set, i.e., there exists a conformal metric
with induced norm $\Hseminorm{}{\,\cdot\,}$, and $\lambda>1$, such that
\begin{equation}\label{expanding}
	\Hseminorm{}{\mathrm{D} f (v)} \ge \lambda \Hseminorm{}{v}
\end{equation} 
for all $z\in J(f)$ and $v\in T_z \widehat{\C}$ (cf.~\cite[p.~205]{Mi06}). For $m\ge 2$, defining $\cH \cR^m$ to be the collection of all hyperbolic rational maps of degree $m$, we prove:

\begin{thmx}[\bf Joint TPO for hyperbolic rational maps]\label{hrational}
	Let $m\ge 2$, $\alpha\in (0,1]$, and $\cP$ denote either $\Holder{\alpha}\bigl(\widehat{\C},\R\bigr)$ or $C^1 \bigl(\widehat{\C}, \R\bigr)$.
    Then $\cH \cR^m \times \cP$ has the Joint TPO property. Moreover, the joint locking set $\Lock ( \cH\cR^m , \cP )$ is itself an open dense subset of $\cH\cR^m \times \cP$.
\end{thmx}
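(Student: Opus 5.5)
The proof runs through the four-step axiomatic scheme outlined in the introduction, specialized to $\cF=\cH\cR^m$ acting on $\widehat{\C}$. Since the maximizing measures of $(f,\phi)$ for $f\in\cH\cR^m$ and $\phi\in\cP$ are supported on $J(f)$ (the Fatou set dynamics of a hyperbolic map is attracted to finitely many attracting cycles, and any invariant measure on the Fatou set is carried by these cycles), I will first reduce to an invariant hyperbolic set. I will treat $\Lambda_f:=J(f)\cup\{\text{attracting cycles of } f\}$ as the relevant ``nonwandering set''. By Mañé–Sad–Sullivan and the standard hyperbolic theory, for $g$ near $f$ in $\cR^m$, $g$ is still hyperbolic, so $\cH\cR^m$ is open. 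Moreover, there is a holomorphic motion $h_g\colon J(f)\to J(g)$ conjugating $f$ to $g$ which is bi-Hölder with exponents and constants tending to $1$ and $C^0$-close to the identity as $g\to f$. Attracting cycles also move continuously. This gives the structural stability axiom. Uniform expansion on a neighborhood of $J(g)$ with constants uniform in $g$ near $f$ follows from continuity of $\mathrm{D}g$ in the conformal metric of \eqref{expanding}. So I first verify the abstract axioms of step (1): structural stability of the invariant set, uniform hyperbolicity, and a Mañé cohomology lemma with seminorm control uniform over a neighborhood of $f$.

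For the Mañé lemma, I will use the standard expanding-map construction: for $\phi\in\Holder{\alpha}$, the revelation/sub-action $u$ with $\phi+u-u\circ g\le Q(g,\phi)$ on $J(g)$ is obtained as a fixed point of the max-plus operator over inverse branches. Its $\alpha$-Hölder seminorm is bounded by $\Hseminorm{\alpha}{\phi}\cdot\lambda^{-\alpha}/(1-\lambda^{-\alpha})$-type constants that depend only on the uniform expansion constant and injectivity radius of inverse branches, hence uniformly in $g$ near $f$. For $\cP=C^1$ I will simply use $C^1\subset\Holder{1}$ with a continuous inclusion; the locking notions transfer as in \cite{HHJL25}. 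The attracting cycles are isolated and handled separately: their averages depend continuously on $(g,\psi)$, and they are finitely many.

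Next I will invoke the joint perturbation theorem of step (2) and the interior-of-locking criterion of step (3), as established in the abstract part of the paper, to conclude two things. First, if $(f,\phi)$ has a unique maximizing periodic orbit $\cO$ that is ``strictly'' maximizing in the sense required (e.g.\ $\phi+u-u\circ f<Q$ off $\cO$ after Mañé normalization), then $(f,\phi)$ lies in the interior of $\Lock$. Second, the continuation $h_g(\cO)$ is the unique maximizing orbit for all $(g,\psi)$ nearby. This gives openness of $\Lock(\cH\cR^m,\cP)$. For density, I fix $(f,\phi)$ and use the single-map TPO result for expanding maps on $J(f)$ (Contreras, as extended in \cite{HHJL25} to open distance-expanding maps, which applies since $f|_{J(f)}$ is open and distance-expanding). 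This yields $\psi$ arbitrarily close to $\phi$ with $\psi|_{J(f)}$ locking. I then compare with the finitely many attracting cycles, adding a small bump function to break any tie and make a single orbit the strict unique maximizer. Hence $(f,\psi)\in\Lock$, which is open, so $\Lock$ is open dense and Joint TPO follows.

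The main obstacle is the uniform Mañé lemma together with transporting Hölder regularity through the holomorphic motion. The conjugacy $h_g$ is only Hölder with exponent $\to 1$, so $\psi\circ h_g$ loses regularity. Hölder regularity is not preserved, and the sub-action on $J(g)$ must be built directly from $g$'s inverse branches rather than pulled back. I will try first to build the sub-action of $(g,\psi)$ directly with the uniform constant, and use $h_g$ only to compare ergodic averages over nearby periodic orbits. This matches how the Anosov case of \cite{HHJL25} was handled.
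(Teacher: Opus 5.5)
\textbf{The H\"older case is essentially sound.} Your outline for $\cP=\Holder{\alpha}$ is close to the paper's route: (IS) from holomorphic motions, (RHE) from uniform expansion near $J(f)$, and (ML) on $J(f)$ together with the attracting cycles. Then Theorem~\ref{openness}~(i) gives openness and individual TPO gives density. Your density step differs from the paper's but is legitimate: you take TPO for the open distance-expanding system $f|_{J(f)}$, then add a smooth bump near an attracting cycle to break ties. The paper's Lemma~\ref{indTPOhr} instead verifies (ACP), (EI), (NLP) for the whole system $(\Omega(f),f)$ and applies \cite{HLMXZ25} once. Make sure the individual TPO result you cite for $f|_{J(f)}$ does not assume local connectedness, as the framework of \cite{HHJL25} does, since $J(f)$ may be a Cantor set.

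\textbf{The genuine gap is $\cP=C^1\bigl(\widehat{\C},\R\bigr)$.} The continuous inclusion $C^1\subseteq\Holder{1}$ does not make the locking notions transfer, in either direction you need.

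\emph{Openness.} A pair in $\Lock(\cH\cR^m,C^1)$ is only known to lock under $C^1$-small perturbations. That is a priori weaker than locking under Lipschitz-small perturbations, so Theorem~\ref{openness}~(i) does not apply to it without further argument. Conversely, the correction term $-2C\Hseminorm{1}{\phi}d_g^{1/2}d(\cdot,\cO_g)$ produced by Theorem~\ref{jointperturbation} is not $C^1$, so it cannot be absorbed into a $C^1$ locking ball. This is exactly why the paper proves Theorem~\ref{jointperturbation'}: it replaces $-d(\cdot,\cO_g)$ by a $C^1$ function $w$ with $\Hnorm{\infty}{\mathrm{D}w}<3/2$ that is uniformly close to it (\cite[Theorem~2.7]{HLMXZ25}), and then proves Theorem~\ref{openness}~(ii). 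Since you invoke the paper's abstract results anyway, you can repair openness by citing Theorem~\ref{openness}~(ii) instead of the inclusion.

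\emph{Density.} This is not repaired that way. TPO results for expanding maps on $J(f)$ produce Lipschitz or H\"older perturbations on $J(f)$, extended by McShane, and these are not $C^1$. Density of $\Lock\bigl(f,\Holder{1}\bigr)$ in $\Holder{1}$ says nothing about density in $C^1$ for the $C^1$ topology. You need a genuine $C^1$ TPO theorem for $(\Omega(f),f)$ inside the manifold $\widehat{\C}$. The paper gets it from \cite[Theorem~1.2]{HLMXZ25}, after checking (EI), (NLP) and (ACP). For (ACP), small-scale pseudo-orbits stay either in $J(f)$ or in a single attracting cycle, and \cite[Proposition~4.2.3]{PU10} handles $J(f)$.

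\textbf{Further corrections.}

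Your opening claim that maximizing measures are supported on $J(f)$ is false: a potential with a high peak at an attracting fixed point is maximized by its Dirac mass. You later include the attracting cycles in $\Lambda_f$ anyway, so drop that sentence.

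For (ML) you must produce a single H\"older function $u$ on $\Omega(f)=J(f)\cup AP(f)$ with $\Hseminorm{\alpha}{u}\le L\Hseminorm{\alpha}{\phi}$. On each attracting cycle use the orbit sums $w\bigl(f^j(x)\bigr)=S^f_j\overline{\phi}(x)$. Then control the seminorm of the patched function using the positive separation $\min\{d(J(f),AP(f)),\Delta(AP(f))\}$ and sup-norm bounds, as in Lemma~\ref{A(c)hr}. Continuity of cycle averages in $(g,\psi)$ is not what is needed here.

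The ``main obstacle'' you identify does not arise in this framework. (ML) is required only for $f$ itself. Theorem~\ref{jointperturbation} transports the sub-action of $f$ to $g$ through $i_g$, paying only $\Hseminorm{\alpha}{u}\,d_g^\alpha$ pointwise (see (\ref{psig<tau})). So you need neither a Ma\~n\'e lemma uniform in $g$ nor any regularity of $u\circ h_g$.
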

Note that $\cH \cR^m$ is open in $\cR^m$ (see~\cite[p.~205]{Mi06}). Conditional on the Density of Hyperbolicity Conjecture (see e.g.~\cite[p.~87]{Ly99}, \cite[p.~4]{Mc94}),  Theorem~\ref{hrational} ensures
that the Joint TPO property holds on all of $\cR^m \times \Holder{\alpha}\bigl( \widehat{\C}, \R \bigr)$, $\alpha\in (0,1]$, and on all of $\cR^m \times C^1\bigl( \widehat{\C}, \R \bigr)$. 

The question of extending Joint TPO within the settings of Theorems~\ref{JTPO AxiomA} and~\ref{hrational} 
is not pursued in the present paper, but we are able to develop a complementary direction, establishing Joint TPO for certain
natural full families of maps (with no explicit hyperbolicity hypothesis), in settings where
density of hyperbolicity holds unconditionally.

Graczyk \& \'Swiatek \cite{GS97} and
Lyubich \cite[p.~4]{Ly97} proved that hyperbolicity 
(in the sense of (\ref{expanding}))
is a dense property among real quadratic polynomials. If $\cQ$ denotes the set of real quadratic polynomials 
(i.e.,~those maps $f$ of the form $f(z)=z^2+c$ for $c\in[-2,1/4]$)
on $\widehat{\C}$, equipped with the standard topology on the parameter space $[-2,1/4]$, we prove:
\begin{thmx}[\bf Joint TPO for real quadratic polynomials]\label{realqua}
	Let $\alpha\in (0,1]$, and let $\cP$ denote either $\Holder{\alpha}\bigl( \widehat{\C}, \R \bigr)$ or $C^1 \bigl( \widehat{\C}, \R \bigr)$. Then $\cQ\times \cP$ has the Joint TPO property. 
\end{thmx}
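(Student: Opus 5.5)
The proof will reduce Theorem~\ref{realqua} to the hyperbolic case, Theorem~\ref{hrational} with $m=2$, using density of hyperbolicity among real quadratic polynomials due to Graczyk--\'Swiatek and Lyubich. Let $\cQ_h \subseteq \cQ$ be the set of real quadratic polynomials $f_c(z)=z^2+c$, $c\in[-2,1/4]$, that are hyperbolic in the sense of (\ref{expanding}). These maps are degree-$2$ rational maps, and the parameter map $c\mapsto f_c$ is a homeomorphism from $[-2,1/4]$ onto its image in $\cR^2$. So $\cQ_h = \cQ\cap \cH\cR^2$, and since $\cH\cR^2$ is open in $\cR^2$, the set $\cQ_h$ is relatively open in $\cQ$. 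By \cite{GS97,Ly97} it is also dense in $\cQ$. Two small points need checking here. First, the endpoint $c=1/4$ is parabolic, so it is not in $\cQ_h$; this is harmless because density only concerns the interior. Second, the topology on $\cQ$ is the subspace topology coming from $\cR^2$.

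Next apply Theorem~\ref{hrational}, which makes $\Lock(\cH\cR^2,\cP)$ open and dense in $\cH\cR^2\times\cP$, and set
\[
U \= \bigl(\cQ_h\times\cP\bigr)\cap \Lock(\cH\cR^2,\cP).
\]
Every pair in $U$ has the PO property.

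Openness of $U$ in $\cQ\times\cP$ is immediate. The set $\Lock(\cH\cR^2,\cP)$ is open in $\cH\cR^2\times\cP$, so intersecting with the relatively open subspace $\cQ_h\times\cP$ gives a set open in $\cQ_h\times\cP$, and hence open in $\cQ\times\cP$.

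Density is the main obstacle, because $\cQ$ is a one-real-parameter slice of the two-complex-dimensional space $\cR^2$. An open dense subset of $\cH\cR^2\times\cP$ need not meet that slice densely, so Theorem~\ref{hrational} cannot be restricted as stated. I would therefore go back to the machinery behind Theorem~\ref{hrational}. Steps (1)--(3) of the axiomatic framework give two things: (a) the joint perturbation theorem, and (b) a sufficient condition for a pair to lie in the interior of the joint locking set. I expect (b) to be of the form: $(f,\phi)$ has a maximizing periodic orbit $\cO$ and $\phi$ satisfies a strict inequality (a Ma\~n\'e-type revealed condition) away from $\cO$. Crucially, this condition concerns only the fixed map $f$ and a perturbation of $\phi$, not perturbations of $f$ inside $\cR^2$.

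The density argument then runs as follows. Fix $(f,\phi)\in\cQ\times\cP$ and a neighbourhood of it.
\begin{enumerate}
\item Use density of $\cQ_h$ to pick a nearby hyperbolic $g\in\cQ_h$.
\item Use the single-map TPO result for the expanding map $g|_{J(g)}$, i.e.\ the Ma\~n\'e lemma with uniform seminorm control from step (1), to find $\psi$ close to $\phi$ satisfying the sufficient condition (b) for $g$.
\item Conclude that $(g,\psi)$ lies in the interior of $\Lock(\cH\cR^2,\cP)$, so $(g,\psi)\in U$.
\end{enumerate}
The key point to verify is that the sufficient condition (b) really is stated in terms of the single map, so that this works inside the slice $\cQ$. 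If instead the paper proves Theorem~\ref{hrational} only via abstract axioms on the class $\cF$, I would verify those axioms directly for $\cF=\cQ_h$, since it is an open subfamily of $\cH\cR^2$. Structural stability on $J$, uniform expansion, and the Ma\~n\'e lemma all hold locally in $\cQ_h$, inherited from $\cH\cR^2$. The axiomatic theorem would then give Joint TPO on $\cQ_h\times\cP$, and density of $\cQ_h$ extends it to $\cQ\times\cP$.
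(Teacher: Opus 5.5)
Your argument is correct and is essentially the paper's. The paper restricts $\cH\cR^2$-stable hyperbolicity to $\cH\cQ$ (your fallback), applies the Interior Condition Theorem~\ref{openness}, and gets density from individual TPO (Lemma~\ref{indTPOhr}) exactly as in your steps 1--3; these work because membership in a joint locking set is a single-map property, so given the openness in Theorem~\ref{hrational}, your step 3 only needs $\psi\in\Lock(g,\cP)$. The one imprecision is that individual TPO must hold for $g$ on all of $\Omega(g)=J(g)\cup AP(g)$, not just for $g|_{J(g)}$, since $\Omega(g)$ always contains the superattracting point $\infty$; this is supplied by Lemma~\ref{indTPOhr}, not by the Ma\~n\'e lemma alone.
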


Our methods can also be applied to various real
one-dimensional dynamical systems. If $M$ is
either a compact interval or the circle, and $r\in \N$, 
we let
$C^r(M,M)$ denote the space of $C^r$ maps on $M$ equipped with the $C^r$ topology, and prove:

\begin{thmx}[\bf Joint TPO for $C^r$ one-dimensional maps]\label{1dim}
	Let $M$ be a compact interval or the circle, and $r\in \N$. If $\alpha\in (0,1]$, and $\cP$ denotes either $\Holder{\alpha}(M,\R)$ or $C^1(M,\R)$, then $C^r(M,M) \times \cP$ has the Joint TPO property. 
\end{thmx}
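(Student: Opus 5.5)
We will derive Theorem~\ref{1dim} from two ingredients: density of Axiom~A (hyperbolic) maps in $C^r(M,M)$ for one-dimensional $M$, and the joint perturbation machinery that underlies Theorem~\ref{JTPO AxiomA}. The relevant density statement is due to Kozlovski--Shen--van Strien, building on Graczyk--\'Swiatek and Lyubich in the real quadratic case. It says that for $M$ a compact interval or the circle and every $r\in\N$, the set $\cH^r$ of $C^r$ maps $f\:M\to M$ satisfying Axiom~A is open and dense in $C^r(M,M)$. Here Axiom~A means that every critical point is attracted to a hyperbolic attracting periodic orbit, all periodic orbits are hyperbolic, and the nonwandering set splits into hyperbolic repelling basic sets and finitely many attracting periodic orbits. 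Given this, it suffices to show that the joint locking set $\Lock(\cH^r,\cP)$ contains an open dense subset of $\cH^r\times\cP$. Openness and density of $\cH^r$ then transfer this to $C^r(M,M)\times\cP$, because a set that is open and dense in an open dense subspace is open and dense in the whole space.

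For $f\in\cH^r$, the maximizing measures of any continuous potential are supported in $\Omega(f)$. This set is a finite union of uniformly expanding hyperbolic basic sets $\Lambda_1,\dots,\Lambda_k$ together with finitely many attracting periodic orbits. Each basic set is $\Omega$-stable: for $g$ that is $C^r$-close to $f$, it has a continuation $\Lambda_i(g)$ conjugated to $\Lambda_i$ by a homeomorphism $h_g$ close to the identity. The plan is to check that each basic set, with its nearby continuations, satisfies the abstract hypotheses of the axiomatic joint perturbation framework:
\begin{itemize}[leftmargin=*]
\item structural stability of the invariant set;
\item uniform expansion constants along the family;
\item a Ma\~n\'e cohomology lemma with uniform control of the H\"older seminorm.
\end{itemize}
These hypotheses were formulated precisely to cover such pieces, and the joint perturbation theorem was already applied to basic sets in proving Theorem~\ref{JTPO AxiomA}. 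Here the pieces are expanding rather than saddle-type, which is the simpler, distance-expanding case treated in \cite{HHJL25}.

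With this in place, the remaining argument follows the scheme of Theorem~\ref{JTPO AxiomA}.
\begin{enumerate}[leftmargin=*]
\item Fix $(f,\phi)\in\cH^r\times\cP$, where $\cP$ is $\Holder{\alpha}(M,\R)$ or $C^1(M,\R)$.
\item Compute $\mpe(f,\phi)$ as the maximum of the values $\mpe(\Lambda_i,f,\phi)$ over the basic sets and of the orbit averages over the attracting cycles.
\item Apply single-system TPO on each expanding piece (Contreras-type results, or those of \cite{HHJL25}) to perturb $\phi$ slightly. After the perturbation, the maximum is attained on a unique periodic orbit, and that orbit satisfies the paper's sufficient criterion for lying in the interior of the joint locking set.
\item If the maximum is attained on an attracting cycle, a bump perturbation of $\phi$ makes it the strict maximizer. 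Since attracting cycles persist and move continuously in $g$, strictness persists in a neighbourhood in $\cH^r\times\cP$.
\item If the maximum is attained inside some $\Lambda_i$, the joint perturbation theorem shows that nearby pairs $(g,\psi)$ keep the continued orbit as the unique maximizer within $\Lambda_i(g)$. The strict gap over the other pieces persists by continuity of $\mpe$ under the conjugacies.
\end{enumerate}

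We expect the main obstacle to be the uniform Ma\~n\'e lemma along the family of continuations $\Lambda_i(g)$. This lemma has to produce sub-actions whose H\"older norms are bounded uniformly in $g$, even though the conjugacy $h_g$ is only H\"older, with an exponent that may fall below $\alpha$. Our first approach will be to build the sub-action directly on $\Lambda_i(g)$ using $g$'s own expansion constants, which are uniform near $f$. We will avoid transporting objects through $h_g$, and will use $h_g$ only to match periodic orbits and to compare maximum ergodic averages via $\int\psi\circ h_g\,\mathrm{d}\mu$, for which uniform closeness in $C^0$ suffices.

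A second, more technical point concerns the case where $M$ is an interval and $f$ maps the boundary into itself. Boundary fixed points may then belong to $\Omega(f)$. We will treat them as hyperbolic periodic orbits, either attracting or repelling, which is permitted because all periodic orbits of a map in $\cH^r$ are hyperbolic.
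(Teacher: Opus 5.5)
There is a genuine gap in the interval case, and it sits exactly at the point you raise in your last paragraph.

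\textbf{The gap.} Your steps~4--5, and your appeal in step~3 to the interior criterion (Theorem~\ref{openness}), all require $\Omega$-stability of $f$ inside $C^r(M,M)$. In the paper's language this is condition (IS) of Definition~\ref{Fstablyhyperbolic} with $\cF=C^r(M,M)$ or $\cF=\cH^r$. That condition fails whenever the hyperbolic map $f$ has a repelling periodic orbit contained in $\partial M$, and hyperbolicity does not exclude this. Perturbations in $C^r(M,M)$ need not fix the endpoints, so such an orbit can be pushed out of $M$.

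Here is a concrete example. Let $M=[0,1]$, and let $f$ be increasing with $f'>0$, $f(1)<1$, and exactly four fixed points $0<p<s<q$, where $0,s$ are repelling and $p,q$ are attracting. Then $f$ is hyperbolic with $K(f)=\{0,s\}$.
\begin{enumerate}
\item Set $g_\ve(x)\=f(x)+\ve(1-x)$. Then $g_\ve\to f$ in $C^r$, $g_\ve(0)=\ve>0$, and $\Omega(g_\ve)=\{p_\ve,s_\ve,q_\ve\}$. So the fixed point $0$ has no continuation, and no $h_g$ as in (IS) exists.
\item Take a smooth $\phi$ with $\phi(0)=1$, $\phi(p)=\phi(q)=0$ and $\phi(s)=-1$. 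Then $(f,\phi)$ is locked, with maximizing measure $\delta_0$.
\item Add to $\phi$ a bump near $p$ of height $\phi(q_\ve)-\phi(p_\ve)\to 0$. This gives pairs $(g_\ve,\psi_\ve)\to(f,\phi)$ for which both $\delta_{p_\ve}$ and $\delta_{q_\ve}$ are maximizing.
\end{enumerate}
Hence $(f,\phi)\in\Lock(C^r(M,M),\cP)$ but it is not an interior point. The criterion you want to invoke is unavailable at such $f$, and its conclusion actually fails there. An argument that keeps $f$ fixed and perturbs only $\phi$ therefore has no justification at these maps. Treating boundary fixed points as ``hyperbolic periodic orbits'' is harmless for attracting ones, which persist inside $M$. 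It is precisely what breaks for repelling ones.

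\textbf{The missing idea.} You must also perturb the map, and then control the endpoints. The paper proceeds as follows.
\begin{enumerate}
\item Using \cite{KSS07}, together with the open density of $\{l: l(\partial M)\subseteq(a,b),\ l'(a)\neq0\neq l'(b)\}$, approximate an arbitrary map by a hyperbolic $f$ with $f(\partial M)\subseteq(a,b)$ and $f'\neq0$ on $\partial M$.
\item Extend $f$ to an endpoint-preserving hyperbolic map $F$ on a larger interval $M_0\supset M$ (Lemmas~\ref{extension} and~\ref{extensionprop}). Extend $\phi$ to $\Phi$ with $\Phi(\partial M_0)<\mpe(f,\phi)-1$.
\item Work in $C^r_0(M_0,M_0)$, where Proposition~\ref{endpointsp} gives stable hyperbolicity, with $K(F)\cup AP(F)$ in place of $\Omega(F)$ via Remark~\ref{nonwander}~(i).
\item Pull the locking neighbourhood back to $C^r(M,M)\times\cP$. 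This uses the norm-controlled extensions of Lemma~\ref{exnormbound} and the fact (Lemma~\ref{extensionprop}~(iv)) that the relevant maximizing measures are supported on $M$.
\end{enumerate}
Alternatively, you could try to prove $\Omega$-stability within $C^r(M,M)$ directly for hyperbolic maps with $f(\partial M)\subseteq\operatorname{int}M$, but your proposal contains no such argument. For the circle there is no boundary issue, and your route is essentially the paper's.

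\textbf{Two smaller points.}
\begin{itemize}
\item The obstacle you anticipate is not real. Condition (ML) is needed only for $f$ itself. The proof of Theorem~\ref{jointperturbation} transports the sub-action of $f$ to $\Omega(g)$ through $i_g$ using only $d_\infty(i_g,\id)$. The uniformity in $g$ that is actually required is the one in (RHE).
\item The basic sets here are typically Cantor sets, so \cite{HHJL25}, which needs local connectedness, does not give individual TPO on them. Also, Lipschitz-type results do not give density in $C^1$. The paper instead verifies (ACP), (EI) and (NLP) of \cite{HLMXZ25} on the whole set $K(f)\cup AP(f)$ (Lemma~\ref{inTPO1dim}).
\end{itemize}
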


Theorem~\ref{realqua} concerns real quadratic polynomials viewed as
holomorphic self-maps of the Riemann sphere $\widehat{\C}$, with potential
functions defined on all of $\widehat{\C}$.
The same family of maps can equivalently be parametrised as the
\emph{logistic family} $g_a : [0,1]\to [0,1]$,
$g_a(x) \= a\,x(1-x)$, $a \in [0,4]$, which are
topologically conjugate to the real quadratic polynomials
as one-dimensional dynamical systems.
However, the two settings lead to distinct ergodic optimization
problems: in Theorem~\ref{realqua} the potential functions are
H\"{o}lder (or $C^1$) on the Riemann sphere $\widehat{\C}$,
whereas in the Joint TPO theorem below they are defined only on the interval $[0,1]$.

\begin{thmx}[\bf Joint TPO for the logistic family]\label{realqua01}
Let $\alpha \in (0,1]$, $\cP$ denote either
$\Holder{\alpha}([0,1],\R)$ or $C^1([0,1],\R)$,
and $\cF \= \{g_a : a \in [0,4]\}$,
where $g_a \: [0,1] \to [0,1]$ is defined by
$g_a(x) \= a\,x(1-x)$.
We equip $\cF$ with the topology induced by the parameter space $[0,4]$.
Then $\cF \times \cP$ has the Joint TPO property.
\end{thmx}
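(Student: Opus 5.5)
The plan is to reduce Theorem~\ref{realqua01} to a joint perturbation statement on the open dense set of hyperbolic parameters, in the same way as for Theorem~\ref{realqua}. Let $H\subset[0,4]$ be the set of parameters $a$ such that $g_a$ is hyperbolic on $[0,1]$: every critical orbit is attracted to an attracting or parabolic-free periodic orbit, and $g_a$ is uniformly expanding on the complement of the basin of attraction. By Graczyk--\'Swiatek and Lyubich, transported through the affine conjugacy between $g_a$ and $z\mapsto z^2+c$, the set $H$ is open and dense in $[0,4]$. The endpoints and the trivial range $a\in[0,1]$, where $0$ or the fixed point attracts everything, are included in $H$, or are handled directly. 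Open density of a Joint TPO set $U\subset H\times\cP$ in $H\times\cP$ then gives open density in $\cF\times\cP$. So it suffices to show that $\Lock(\cF,\cP)\cap(H\times\cP)$ contains an open dense subset of $H\times\cP$.

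For $a\in H$, every $g_a$-invariant measure is supported on the nonwandering set. This set is the disjoint union of finitely many attracting periodic orbits and a compact hyperbolic repeller $\Lambda_a$. By structural stability, $\Lambda_a$ moves by a conjugacy $h_b\colon\Lambda_a\to\Lambda_b$ that depends continuously on $b$ near $a$ and is H\"older close to the identity. The attracting orbits also persist and move continuously. This is exactly the structure required by the axiomatic framework (steps (1)--(3) of the introduction): uniform expansion on $\Lambda_b$ with constants uniform in $b$ near $a$, and a Ma\~n\'e cohomology lemma on $\Lambda_b$ with seminorm control uniform in $b$. Note that potentials are now only defined on $[0,1]$, but H\"older (resp.\ $C^1$) functions on $[0,1]$ restrict to H\"older functions on $\Lambda_b$, which is all the framework uses. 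So I will verify these axioms for the pair $(\Lambda_b,\,g_b|_{\Lambda_b})$ together with the finite attracting cycles, and then invoke the joint perturbation theorem and the interior-of-locking criterion.

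Density goes as follows. Given $(g_a,\phi)$ with $a\in H$, first perturb $\phi$ using the known TPO results for hyperbolic repellers (Contreras-type results, valid on expanding invariant sets with H\"older or $C^1$ potentials) to get $\phi'$ in the locking set of $g_a$. Restricted to finitely many candidates, namely the attracting cycles plus $\Lambda_a$, the maximum is attained on a single periodic orbit. A further small perturbation of $\phi'$ near that orbit makes it a strict maximizer with a gap over all other invariant measures. The joint perturbation theorem then shows that for $(g_b,\psi)$ near $(g_a,\phi')$ the continuation of that orbit is still the unique maximizer, which gives openness.

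The main obstacle is the uniformity of the Ma\~n\'e lemma and of the locking gap as $b$ varies, because the conjugacy $h_b$ is only H\"older. I would handle this by pulling $\psi$ back via $h_b$ to $\Lambda_a$. This costs a loss in the H\"older exponent, but that loss stays uniform in a neighbourhood, and one then applies the strict-gap estimate for the fixed system $g_a$. The attracting cycles need no such argument, since the measures they carry vary continuously with $b$.
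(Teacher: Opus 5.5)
Your skeleton is the paper's: restrict to the open dense set of hyperbolic parameters (Graczyk--\'Swi\k{a}tek, Lyubich), show each hyperbolic $g_a$ is stably hyperbolic, obtain individual TPO, and conclude with Theorem~\ref{openness}. The paper gets the middle steps more cheaply. It notes that $g_a(\{0,1\})\subseteq\{0,1\}$ and that $a\mapsto g_a$ is affine, so $\cF\subseteq C^1_0([0,1],[0,1])$ and $\cF$ carries the $C^1$ subspace topology. Proposition~\ref{endpointsp} (with $\Omega(g)$ replaced by $K(g)$ together with the attracting periodic points, as in Remark~\ref{nonwander}~(i)) then gives stable hyperbolicity, which restricts to the hyperbolic subfamily, and Lemma~\ref{inTPO1dim} gives individual TPO. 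However, the step you single out as the main obstacle rests on a misreading of the framework, and the remedy you propose would not work as written.

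\emph{What the framework actually needs.} Condition (ML) in Definition~\ref{Fstablyhyperbolic} is required only for the unperturbed map $g_a$; uniformity in $b$ is needed only in (RHE). In the proof of Theorem~\ref{jointperturbation}, the sub-action of $g_a$ is transported to $g_b$ through $i_g$. The cost is controlled purely by the $C^0$ displacement $d_g$ (see (\ref{psig<tau})). That cost is absorbed by the penalty $-2C\Hseminorm{\alpha}{\phi}\,d_g^{\alpha/2}d(\cdot,\cO_g)^\alpha$, whose $\alpha$-seminorm tends to $0$, together with the uniform shadowing estimate (RHE). No H\"older regularity of $h_b$ is ever used.

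\emph{Why the pull-back fails as written.} It has two concrete defects.
\begin{enumerate}
\item There is no uniform ``gap over all other invariant measures''. Whenever $\cO$ is not isolated in $\Lambda_a$, periodic measures on $\Lambda_a$ converge weakly to $\mu_{\cO}$, so $\inf_{\mu\neq\mu_{\cO}}\bigl(\int\phi'\,\mathrm{d}\mu_{\cO}-\int\phi'\,\mathrm{d}\mu\bigr)=0$. An error like $\phi'\circ h_b-\phi'$ is $C^0$-small, but its $\alpha$-seminorm does not tend to $0$ (it need not even be finite, since $h_b$ is only H\"older). Such an error cannot be absorbed.
\item After pulling back, $\psi\circ h_b$ is close to $\phi'$ only in some $\Holder{\gamma}$-norm with $\gamma<\alpha$, and it is not $C^1$ when $\cP=C^1$. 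So applying the estimate for $g_a$ requires $\phi'$ to be locked in $\Holder{\gamma}(\Lambda_a,\R)$. Your density step does not provide this, and you cannot force it by subtracting $\ve\,d(\cdot,\cO)^\gamma$, since that function leaves $\cP$.
\end{enumerate}
The second point can be repaired for repelling orbits with an extra lemma you have not supplied. Take a sub-action $u$ witnessing $\phi'-\mpe(g_a,\phi')+u-u\circ g_a\le -t\,d(\cdot,\cO)^\alpha$, and add to it a small multiple of $d(\cdot,\cO)^\gamma$; by expansion, the coboundary of the added term is $\le -c\,d(\cdot,\cO)^\gamma$ near $\cO$.

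\emph{The fix.} Drop the uniform-(ML) requirement and the pull-back altogether. Verify (ML) for $g_a$ alone by patching sub-actions on $\Lambda_a$ and on the attracting cycles, as in Lemma~\ref{A(c)hr}. Then apply Theorems~\ref{jointperturbation} and~\ref{openness} as they stand.
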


\begin{rem}
	In this article, as in \cite{HHJL25}, the Joint TPO property is established
    for spaces of maps equipped with some topology stronger than the $C^0$ one.
    It turns out that results for the $C^0$ topology have a rather different flavour, and will be presented elsewhere.
\end{rem}

\subsection{Overview of the proof strategy}

The main results of Subsection~\ref{mainresultssubsection} are facilitated
by the abstract hyperbolic framework developed in Section~\ref{joint perturbation},
and the joint perturbation theorems established in that setting.
For a family $\cF$ of
Lipschitz self-maps of a compact metric space $(X,d)$, we define a notion of 
\emph{$\cF$-stable hyperbolicity} for a map $f\in\cF$, given as Definition~\ref{Fstablyhyperbolic}
(comprising the three conditions of \emph{Intertwining Stability} (IS), \emph{Robust Hyperbolic Estimates} (RHE), and \emph{Ma\~n\'e Lemma} (ML)); the proof
of Theorems~\ref{JTPO AxiomA}, \ref{hrational}, \ref{realqua}, \ref{1dim}, and~\ref{realqua01} will then involve verifying that the appropriate $\cF$-stable hyperbolicity holds.
The key result of Section~\ref{joint perturbation} is the joint perturbation theorem
(Theorem~\ref{jointperturbation}): if $f$ is $\cF$-stably hyperbolic
and $\cO$ is an $f$-periodic orbit with
$\Mmax(f,\phi)=\{\mu_{\cO}\}$, then for all $g$ near $f$ the orbit
$\cO_g\=h_g(\cO)$ is the unique $(g,\phi')$-maximizing orbit for every function
$\phi'$ in a certain explicit neighbourhood of $\phi$ (which shrinks at a
controlled rate as $g$ approaches $f$).
This joint perturbation theorem underpins our whole approach, and in particular leads to
the Interior Condition Theorem
(Theorem~\ref{openness}): if $f$ is
$\cF$-stably hyperbolic and $(f,\phi)\in\Lock(\cF,\cP)$ with $\phi$
nonconstant, then $(f,\phi)$ lies in the interior of
the joint locking set
$\Lock(\cF,\cP)$.

To prove Theorem~\ref{JTPO AxiomA}, 
the  $\cA^r(M)$-stable hyperbolicity
of every $f\in\cA^r(M)$ 
is established via
Lemmas~\ref{A(a)},~\ref{A(b)}, and~\ref{Mane lemma}. Condition (IS) in 
Definition~\ref{Fstablyhyperbolic} follows from $\Omega$-stability: since Axiom~A with
the no-cycle condition is equivalent to $\Omega$-stability
\cite{Pa70,Sm70}, the required intertwining maps $h_g$, $i_g$ exist
and converge to the identity.
Definition~\ref{Fstablyhyperbolic}~(RHE) requires a strengthening of the well-known
hyperbolicity of $\Omega(f)$ for a single map (cf.~\cite[Proposition~6.4.16]{KH95}):
the relevant constants must be chosen uniformly over
a neighbourhood of $f$, necessitating a careful
analysis of the adapted Riemannian metric. 
Condition (ML) in Definition~\ref{Fstablyhyperbolic} is established by adapting the 
analysis of \cite{STY24} (cf.~also the closely related \cite{Bou11}).
Individual TPO for each fixed Axiom~A diffeomorphism
(Proposition~\ref{TPO AxiomA}) follows from~\cite{HLMXZ25},
and Theorem~\ref{openness} then allows joint locking to be deduced, as well
as the fact that $\Lock(\cA^r(M),\cP)$ is itself open.

To prove Theorem~\ref{hrational}, the verification of Definition~\ref{Fstablyhyperbolic} for each hyperbolic rational map
proceeds via tools from complex dynamics, and a key
difference arises regarding condition (ML).
For condition (IS) in Definition~\ref{Fstablyhyperbolic}, the structural stability conjugacies under perturbation are 
provided by the theory of holomorphic motions \cite{Ly83, MSS83}, and
for condition (RHE) in Definition~\ref{Fstablyhyperbolic}, Lemma~\ref{disexpanding} shows that the expanding
constants of $f|_{J(f)}$ can be chosen uniformly over a neighbourhood
of $f$.
For condition (ML) in Definition~\ref{Fstablyhyperbolic}, unlike in the Axiom~A case where
a single Ma\~n\'e lemma argument applies
throughout, the nonwandering set of a hyperbolic rational map splits
into the expanding Julia set and finitely many attracting periodic
orbits: Lemma~\ref{A(c)hr} constructs the sub-action $u$
separately on each piece (via distance-expanding Ma\~n\'e lemma techniques on $J(f)$,
and an explicit orbit-average formula on the attracting orbits),
and then patches these together.

To prove Theorem~\ref{realqua}, the stable hyperbolicity properties
of Theorem~\ref{hrational} apply, and can be combined with the 
result of Graczyk--\'{S}wi\k{a}tek \cite{GS97} and Lyubich \cite{Ly97},
that hyperbolicity is an open dense property
in the real quadratic polynomial family.

To prove Theorem~\ref{1dim},
the most delicate and technically demanding result in this article,
we must show that the interior of the joint locking set is dense in $C^r(M,M)\times\cP$. For density,
we start by noting that the theorem of Kozlovski,
Shen \& van Strien~\cite{KSS07}
means that any $C^r$ map on $M$ can be approximated
by a hyperbolic map $f$ whose
boundary values lie in the interior of $M$.
We then construct (via Lemma~\ref{extension})
an extension $F$ of $f$ to a strictly larger interval $M_0$
that preserves the new endpoints $\partial M_0$, and that is itself
hyperbolic (Lemma~\ref{extensionprop}~(iii));
in particular, $F$ belongs to the space $C^r_0(M_0,M_0)$
of endpoint-preserving maps, to which the abstract framework applies.
Proposition~\ref{endpointsp} then verifies that $F$ is stably
hyperbolic within $C^r_0(M_0,M_0)$, by showing that a certain restriction
is distance-expanding with respect to an adapted metric.
Given any $\phi\in\cP$, we extend it to some $\Phi$ belonging to an appropriate space of potentials $\cP_0$
on $M_0$, whose values at $\partial M_0$ are prescribed to lie strictly
below $\mpe(f,\phi)$; Lemma~\ref{inTPO1dim} (Individual TPO for $F$)
then produces a nearby $\Psi\in\Lock(F,\cP_0)$, and
Theorem~\ref{openness} yields a locking neighbourhood of
$(F,\Psi)$ in $C^r_0(M_0,M_0)\times\cP_0$.
In Lemma~\ref{extensionprop} we establish that this locking phenomenon descends
to the original space: every $F$-invariant measure is supported on
$M\cup\partial M_0$, and since the values of $\Psi$ at $\partial M_0$
remain strictly below the maximum ergodic average, every
$(G,\Xi)$-maximizing measure, for $(G,\Xi)$ in the locking
neighbourhood, is automatically supported on $M$, and hence
coincides with the $(g,\xi)$-maximizing measure, where $g\=G|_M$
and $\xi\=\Xi|_M$.
This extension procedure, and particularly Lemma~\ref{exnormbound},
which provides norm bounds ensuring that the locking neighbourhood in
$C^r_0(M_0,M_0)\times\cP_0$ pulls back to one in
$C^r(M,M)\times\cP$, is a key novelty in this $C^r$ one-dimensional
case. Because the initial $C^r$ map and potential were arbitrary, showing that their approximations always admit such a locking neighbourhood establishes that the interior of the joint locking set is dense.

To prove Theorem~\ref{realqua01}, we note that every logistic map
satisfies $g_a(0) = g_a(1) = 0$, so $\cF \subseteq C^1_0([0,1],[0,1])$,
and the abstract framework of Proposition~\ref{endpointsp} applies directly.
Density of hyperbolicity in $\cF$ again follows from
Graczyk--\'Swi\k{a}tek~\cite{GS97} and Lyubich~\cite[p.~4]{Ly97},
and Individual TPO for each hyperbolic $g_a \in \cF$ follows from
Lemma~\ref{inTPO1dim}.
The proof then proceeds by the same argument as for Theorem~\ref{realqua}:
$\cF$-stable hyperbolicity is inherited from
Proposition~\ref{endpointsp} by restriction, and
Theorem~\ref{openness} yields the Joint TPO property.

\begin{rem}[\bf Relation to \cite{HHJL25}]
\label{rem:HHJL25}
The present article is a companion to~\cite{HHJL25},
which introduced the notion of Joint TPO
and established it in three settings:
open Lipschitz distance-expanding maps,
Anosov diffeomorphisms, and beta-transformations.
Here we clarify the relationship between
the two works, and their somewhat different routes through related material.

\vspace{2pt}

\noindent\emph{Open distance-expanding maps.}
Let $\cE(X)$ denote the space of open Lipschitz distance-expanding maps
on a compact locally connected metric space $X$,
as in~\cite{HHJL25}.
Since $\Omega(T)=X$ for every $T\in\cE(X)$, one can verify
that conditions (RHE) and (ML) of Definition~\ref{Fstablyhyperbolic}
hold for every $T\in\cE(X)$:
condition (RHE) holds with $K=1$, using the
distance-expanding property and the 
machinery of \cite{HHJL25} establishing
uniformity of expanding constants
over an appropriate neighbourhood, 
while condition (ML) is precisely the
Ma\~n\'e lemma of~\cite{HHJL25}, whose specific form is contained in \cite{LS26}.
Condition (IS), however, requires structural stability of $\cE(X)$,
and although 
in principle
provable in this generality (as noted in~\cite{HHJL25}),
it is not yet available in the literature for open distance-expanding maps
on arbitrary compact locally connected metric spaces,
indeed the method of~\cite{HHJL25} deliberately circumvents it,
establishing Joint TPO instead via a more elementary
Locally Connected Shadowing Lemma.
Note that, once structural stability is established,
the abstract framework of this article would yield only a \emph{qualitative}
Joint TPO statement,
whereas~\cite{HHJL25} establishes a stronger
\emph{effective} Joint TPO theorem,
with explicit quantitative bounds on the size of the
joint locking neighbourhood in terms of the expanding constants
and the H\"older seminorm of the potential function.
The two approaches are thus complementary:
the present article provides a broader axiomatic framework,
while~\cite{HHJL25} achieves sharper quantitative conclusions
via a more direct route.

\vspace{2pt}

\noindent\emph{Anosov diffeomorphisms.}
The approach of the present paper applies to
$C^r$ Axiom~A diffeomorphisms with the no-cycle property
(Theorem~\ref{JTPO AxiomA}), and in particular to Anosov
diffeomorphisms (which are structurally stable, hence have the no-cycle condition).
The paper~\cite{HHJL25} includes a self-contained proof of
Joint TPO for Anosov diffeomorphisms as well, 
presented there as a deliberately less detailed preview of the present work.

\vspace{2pt}

\noindent\emph{Beta-transformations.}
The beta-transformation results of~\cite{HHJL25} lie entirely outside
the scope of the present article, the two approaches having no
overlap in this case.
Beta-transformations $T_\beta\:[0,1]\to[0,1]$, defined as $T_\beta(x)=\beta x \pmod 1$,
are not continuous,
so do not belong to any space of Lipschitz self-maps of a compact
metric space in the sense required by Definition~\ref{Fstablyhyperbolic}.
Moreover, the sub-action for a beta-transformation in the Ma\~n\'e lemma of \cite{HHJL25}
need not be continuous, so condition (ML)
fails in the sense required here.
The analysis in~\cite{HHJL25} of Joint TPO for beta-transformations is therefore 
independent of the methods presented here, and highlights that the Joint TPO phenomenon is not confined to the class of systems handled by the present article.
\end{rem}

\subsection{Organisation of the article}

In Subsection~\ref{Notation} below, we collect some basic notation that is used throughout the article. In Section~\ref{joint perturbation}, we 
introduce an abstract framework accommodating a broad notion of hyperbolicity. Under these assumptions, we first establish a joint perturbation theorem (cf.~Theorem~\ref{jointperturbation}) for H\"older potential functions $\phi$, which generalises 
results of
\cite{HHJL25}. We then use Theorem~\ref{jointperturbation} to establish a $C^1$ analogue (cf.~Theorem~\ref{jointperturbation'}), which is used as an ingredient for establishing Joint TPO for $\cA^r(M) \times C^1(M,\R)$, and also give an important sufficient condition for a pair $(f,\phi)$ to belong to the interior of the joint locking set (cf.~Theorem~\ref{openness}). In Section~\ref{sec_Axiom_A} we prove Theorem~\ref{JTPO AxiomA},
in Section~\ref{sec.hr} we prove
Theorems~\ref{hrational}
and~\ref{realqua}, 
and in Section~\ref{sec.r1d}
we prove Theorems~\ref{1dim} and~\ref{realqua01};
each of these proofs, for a different family $\cF$ of hyperbolic maps, proceeds by showing that the abstract $\cF$-stable hyperbolicity of Section~\ref{joint perturbation} is satisfied by the relevant family $\cF$.
Some of the more technical proofs 
are deferred to Appendix~\ref{A}.

\subsection{Notation}\label{Notation}

Here we collect notation used throughout the article.

The set of positive integers is denoted by $\N$, and the set of nonnegative integers by $\N_0 \= \N \cup \{0\}$.

Let $(X,d)$ be a 
metric space.
For a subset $Y\subseteq X$, and $\ve>0$, the $\ve$-neighbourhood of $Y$ will be denoted by
$B_X(Y,\ve ) \= \{x\in X: d(x,Y)<\ve \}$,
and if $Y=\{y\}$ is a singleton, we write 
$B_X(y,\ve ) \= B_X(\{y\},\ve )$.

In the definitions below, suppose moreover that $X$ is compact.

For Lipschitz maps $f,\,g \: X \to X$, we write 
\begin{equation*}
	\LIP_d(f) \= \sup  \{  d(f(x), f(y)) / d(x,y)  : x,\, y \in X, \, x\neq y \}.
\end{equation*}
If there is no confusion regarding the metric, we shall omit the subscript $d$ and simply write $\LIP(f)$.
We denote the uniform distance between $f$ and $g$ by 
\begin{equation*}
	d_\infty(f,g) \= \sup \{ d(f(x), g(x)) : x\in X \}.
\end{equation*}

Given a map $f\: X\to X$, for each $x\in X$ we denote the $f$-forward orbit of $x$ by $\cO^f(x) \= \{ f^n(x) \}_{n=0}^{+\infty}$. 
If $\phi \: X \to \R$ is a function, for every $n\in \N$, we denote
\begin{equation*}
	S_n^f \phi \= \sum_{i=0}^{n-1} \phi \circ f^i.
\end{equation*}
If $K \subseteq X$ is a finite subset, we denote the cardinality of $K$ by $\card K$, and when $K$ is nonempty we define the \emph{gap} of $K$, denoted by $\Delta(K)$, to be $\Delta(K) \= +\infty$ if $\card K =1$ and 
$
	\Delta(K) \= \min \{ d(x,y) : x,\, y \in X, \, x\neq y \}
$
if $\card K \ge 2$.  
The collection of all $f$-periodic orbits is denoted by $\Per(f)$. For every $x\in X$, we denote the Dirac measure at $x$ by $\delta_x$. If $\cO\in \Per(f)$, we denote
\begin{equation*}
	\mu_{\cO} \= \frac{1}{\card\cO} \sum_{x\in \cO} \delta_x.
\end{equation*}

For $\alpha\in (0,1]$, let $\Holder{\alpha}(X,\R)$ denote the Banach space of real-valued $\alpha$-H\"older functions defined on $X$, equipped with the norm
\begin{equation*}
	\Hnorm{\alpha,X}{\phi} \= \Hnorm{\infty,X}{\phi} + \Hseminorm{\alpha,X}{\phi}, 
\end{equation*}
where $\Hnorm{\infty,X}{\phi} \= \sup_{x\in X} \abs{\phi(x)}$ and $\Hseminorm{\alpha,X}{\phi} \= \sup \bigl\{ \frac{\abs{\phi(x)-\phi(y)}}{d(x,y)^\alpha} : x,\, y \in X, \, x\neq y \bigr\}$.
Let $C^1(M,\R)$ denote the Banach space of real-valued $C^1$ functions defined on a compact Riemannian manifold $M$, equipped with the norm
\begin{equation*}
	\Hnorm{C^1,M}{\phi} \= \Hnorm{\infty,M}{\phi} + \Hnorm{\infty,M}{\mathrm{D}\phi}, 
\end{equation*}
where $\mathrm{D}\phi$ is the derivative of $\phi$. We often omit the subscripts $X$ or $M$ if there is no risk of confusion.

\section{Axiomatic hyperbolic framework and joint perturbation theorems}\label{joint perturbation}

In this section, we first prove two \emph{joint perturbation} theorems (Theorems~\ref{jointperturbation} and~\ref{jointperturbation'}) 
within an abstract 
hyperbolic framework, generalising that of
\cite{HHJL25}.
We then use these joint perturbation theorems 
to give
a sufficient condition for a pair $(f,\phi)$ to lie in the interior of the joint locking set (see Theorem~\ref{openness}). 

The abstract \emph{stable hyperbolicity} condition 
underlying our theory is defined as follows.

\begin{definition}[\bf $\cF$-stably hyperbolic maps]\label{Fstablyhyperbolic}
    Let $\cF$ be a topological space of Lipschitz self-maps on a compact metric space $(X,d)$. We say that $f\in \cF$ is \emph{$\cF$-stably hyperbolic} if the following conditions hold:
	\begin{enumerate}
		\smallskip
		\item[(\textbf{IS})] (\textbf{Intertwining stability.}) 
		For every $\varepsilon>0$, there exists a neighbourhood $U$ of $f$ in $\cF$ such that if $g\in U$ then $\LIP(g) < 2\LIP(f)$, and there exist maps
	\begin{equation*}
		h_g \: \Omega(f) \to \Omega(g)
		\quad\text{ and }\quad
		i_g \: \Omega(g) \to \Omega(f)
	\end{equation*}
		satisfying
		$h_g \circ f = g \circ h_g$, $i_g \circ g = f \circ i_g$, $d_\infty (h_g , \id) < \ve$, and $d_\infty (i_g , \id) < \ve$.

		\smallskip
		\item[(\textbf{RHE})] (\textbf{Robust hyperbolic estimates.})  
		There exists a neighbourhood $U$ of $f$, and constants $K=K_U>0$, $\delta=\delta_U>0$, $\lambda=\lambda_U>1$, such that for all $g\in U$,  $x,\,y \in \Omega(g)$, and  $n\in\N$, if
		\begin{equation*}
				\max_{0\le m\le n} d(g^m(x),g^m(y)) < \delta,
		\end{equation*}
	then for all $0\le m \le n$,
		\begin{equation}\label{assumptionb}
			d(g^m(x),g^m(y))
			\le
			K\,\lambda^{-\min\{m,\,n-m\}}
			(d(x,y)+d(g^n(x),g^n(y))).
		\end{equation}

		\item[(\textbf{ML})] (\textbf{Ma\~n\'e lemma.})  
		For every $\alpha\in(0,1]$, there exists a constant $L>0$ such that for all $\phi\in\Holder{\alpha}(X,\R)$ there exists a function $u\in\Holder{\alpha}(\Omega(f),\R)$ satisfying
		\begin{equation*}
			 \phi+u-u\circ f  \le \mpe(f,\phi) \ \ \text{on }\Omega(f), \quad \text{ and } \quad
			\Hseminorm{\alpha}{u}  \le L\,\Hseminorm{\alpha}{\phi} .
		\end{equation*}
	\end{enumerate}
\end{definition}    

\begin{rem}\label{rm_assumption_(b)_sum}
\smallskip
    (i)
	Suppose that $\cF$ is a topological space of Lipschitz self-maps on a compact metric space $(X,d)$, that $f\in \cF$ satisfies condition (RHE) in Definition~\ref{Fstablyhyperbolic}, and that $\alpha\in (0,1]$. By direct calculation, we see that if $x$ and $y$ satisfy $\max\limits_{0\leq m\leq n} \{d ( g^m(x), g^m(y)  ) \}<\rho\leq \delta$, then
	\begin{equation*}
		\sum_{m=0}^{n} d(g^m(x), g^m(y))^\alpha 
        \leq K^\alpha \sum_{m=0}^{n} (\lambda^{-m\alpha}+\lambda^{-(n-m)\alpha}) 
		(d(x,y)+d(g^n(x),g^n(y)))^\alpha
		 \leq \frac{ 4 K^\alpha \rho^\alpha\lambda^\alpha}{\lambda^\alpha-1}.
	\end{equation*}

    (ii) Note that by combining condition (ML) and the McShane extension theorem (see e.g.~\cite[Theorem~1.33]{Wea18}), the function $u$, which is defined on $\Omega(f)$ and $\alpha$-H\"older, can be extended to a function in $\Holder{\alpha}(X,\R)$, with the same H\"older seminorm $\Hseminorm{\alpha,\Omega(f)}{u}$. 
\end{rem}

We begin by establishing a joint perturbation theorem for H\"older potentials, generalising that of \cite{HHJL25}.

\begin{theorem}[{\bf Joint perturbation for H\"older potentials}]
	\label{jointperturbation}
	Let $\alpha\in (0,1]$, and let $\cF$ be a topological space of Lipschitz self-maps on a compact metric space $(X,d)$.  
	If $f\in \cF$ is $\cF$-stably hyperbolic, and $\cO$ is an $f$-periodic orbit,
	then there exist a neighbourhood $U$ of $f$, and $C>0$, such that for all $g\in U$, the following hold:
	\begin{enumerate}[label=\rm{(\roman*)}]
		\smallskip
		\item The maps $h_g$ and $i_g$ in condition (IS) in Definition~\ref{Fstablyhyperbolic} exist. 
		
		\smallskip
		\item If $\cO_g \= h_g(\cO)$, $d_g\=\max\{d_\infty(h_g,\id),\, d_\infty(i_g,\id)\}$, and a nonconstant function $\phi\in \Holder{\alpha}(X,\R)$ satisfies $\Mmax(f,\phi)=\{\mu_{\cO}\}$,
		then 
		\begin{equation*}
			\{\mu_{\cO_g} \} = \Mmax \bigl( g, \phi - 2C \, \Hseminorm{\alpha}{\phi}\, d_g^{\alpha/2} \, d(\cdot,\cO_g)^\alpha+\xi \bigr)
		\end{equation*}
		for all $\xi\in \Holder{\alpha}(X,\R)$ satisfying
		$\Hseminorm{\alpha}{\xi} \le 5C \Hseminorm{\alpha}{\phi} d_g^{\alpha/2}$ and $\Hnorm{\infty}{\xi} \le \Hseminorm{\alpha}{\phi}  d_g^\alpha$.
	\end{enumerate}

\end{theorem}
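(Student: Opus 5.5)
The plan is to transport a Mañé sub-action from $f$ to $g$ through the intertwining map $i_g$, and then run a Yuan--Hunt/Bousch-type comparison. The penalty $2C\Hseminorm{\alpha}{\phi}\,d_g^{\alpha/2}\,d(\cdot,\cO_g)^\alpha$ has to beat two kinds of error: the $O(d_g^\alpha)$ errors coming from the conjugacies, and the perturbation $\xi$.

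Part (i) is immediate: take $U$ inside the neighbourhoods given by (IS) and (RHE). Shrinking $U$ further, we may assume $d_g$ is small compared with $\delta$ and with the gap $\Delta(\cO)$.

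\textbf{Step 1: the transported sub-action.} Apply condition (ML) to $\phi$ to obtain $u\in\Holder{\alpha}(\Omega(f),\R)$ with $\phi+u-u\circ f\le \mpe(f,\phi)$ on $\Omega(f)$ and $\Hseminorm{\alpha}{u}\le L\Hseminorm{\alpha}{\phi}$. On $\Omega(g)$ set $u_g\=u\circ i_g$. Since $i_g\circ g=f\circ i_g$, we get for $x\in\Omega(g)$
\begin{equation*}
\phi(x)+u_g(x)-u_g(g(x)) = \bigl(\phi+u-u\circ f\bigr)(i_g(x)) + \phi(x)-\phi(i_g(x)) \le \mpe(f,\phi)+\Hseminorm{\alpha}{\phi}\,d_g^\alpha .
\end{equation*}
Every $g$-invariant measure is supported on $\Omega(g)$. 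Hence, for every $\mu\in\cM(X,g)$, we have $\int\phi\,\mathrm{d}\mu\le \mpe(f,\phi)+\Hseminorm{\alpha}{\phi}d_g^\alpha$, and in fact $\int(\mpe(f,\phi)-\phi)\,\mathrm{d}\mu\ge \int \tpsi\,\mathrm{d}\mu-\Hseminorm{\alpha}{\phi}d_g^\alpha$, where $\tpsi\=\mpe(f,\phi)-(\phi+u-u\circ f)\circ i_g\ge 0$. In the other direction, since $\cO_g=h_g(\cO)$ and $\int\phi\,\mathrm{d}\mu_{\cO}=\mpe(f,\phi)$, we get $\int\phi\,\mathrm{d}\mu_{\cO_g}\ge\mpe(f,\phi)-\Hseminorm{\alpha}{\phi}d_g^\alpha$. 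The penalty term vanishes on $\cO_g$, and $\Hnorm{\infty}{\xi}\le\Hseminorm{\alpha}{\phi}d_g^\alpha$. So the target value $\int(\phi-\text{penalty}+\xi)\,\mathrm{d}\mu_{\cO_g}$ lies within $O(\Hseminorm{\alpha}{\phi}d_g^\alpha)$ of $\mpe(f,\phi)$.

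\textbf{Step 2: reduction to ergodic $\mu\ne\mu_{\cO_g}$ and splitting of the orbit.} It suffices to show $\int\Phi\,\mathrm{d}\mu<\int\Phi\,\mathrm{d}\mu_{\cO_g}$ for every ergodic $g$-invariant $\mu\ne\mu_{\cO_g}$, where $\Phi$ denotes the perturbed potential. Fix a $\mu$-generic point and split its orbit into two kinds of times.

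Times at which the orbit is farther than $r\=d_g^{1/2}$ from $\cO_g$ are handled by the penalty alone. There it exceeds $2C\Hseminorm{\alpha}{\phi}d_g^{\alpha}$, which beats the $\xi$- and conjugacy-errors provided $C\ge 2$ or so.

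Maximal segments during which the orbit stays within a small $\rho\le\delta$ of $\cO_g$ are compared with the corresponding segment of $\cO_g$. Here condition (RHE) and Remark~\ref{rm_assumption_(b)_sum}(i) apply to the pair $(g^m(x),g^m(y))$ with $y\in\cO_g$. They give
\begin{equation*}
\sum_{m}d(g^mx,g^my)^\alpha\le C_1\bigl(d(x,\cO_g)^\alpha+d(g^nx,\cO_g)^\alpha\bigr).
\end{equation*}
Hence the Birkhoff sum of $\xi$ along the segment differs from that along $\cO_g$ by at most $5C\Hseminorm{\alpha}{\phi}d_g^{\alpha/2}\,C_1(\text{endpoint terms})$, and the same argument controls the $\phi$-sums. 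The endpoint terms are charged to the penalty at the endpoints (the entry and exit times, where the distance is comparable to $\rho$). This is where $C$ must be chosen as a fixed multiple of $K^\alpha\lambda^\alpha/(\lambda^\alpha-1)$ and of $L$.

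\textbf{Step 3: the middle regime (main obstacle).} The main obstacle is the regime of orbit pieces that stay at distance between $r=d_g^{1/2}$ and a fixed $\rho$ from $\cO_g$, or that leave a neighbourhood of $\cO_g$ altogether. There the penalty alone is not obviously larger than the gain in $\phi$. I would use the strict uniqueness $\Mmax(f,\phi)=\{\mu_{\cO}\}$, via a compactness argument, to obtain a fixed $\eta>0$ such that $\tpsi\ge\eta$-type gains accumulate: the sub-action deficit $\tpsi$ is bounded below by a positive constant on average over any orbit piece that leaves the $\rho$-neighbourhood of $\cO_g$. This uniform-in-$g$ lower bound comes from passing to limits through $i_g$. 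It dominates the $O(d_g^{\alpha/2})$ errors once $U$ is small.

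Combining the three regimes with $\int\tpsi\,\mathrm{d}\mu\ge0$ then yields the strict inequality, with the constant $C$ depending only on $K,\lambda,\delta,L$ and $\cO$.
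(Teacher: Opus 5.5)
There is a genuine gap. It lies in your Step~2, not in the ``middle regime'' of Step~3.

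\textbf{The middle regime is not an obstacle.} Once the sub-action is in place, your own pointwise bound already handles \emph{every} time at which the orbit is at distance $\gtrsim d_g^{1/2}$ from $\cO_g$, however far out. That bound is $\phi+u_g-u_g\circ g\le\mpe(f,\phi)+\Hseminorm{\alpha}{\phi}d_g^\alpha$ on $\Omega(g)$, combined with the penalty. This is exactly the paper's Case~A, with threshold $\rho\le(L_1/C)^{1/\alpha}d_g^{1/2}$.

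\textbf{The tracking segments are the real difficulty, and your charging scheme does not close there.} Suppose a segment ends when the orbit leaves a ball of radius $\rho$.
\begin{itemize}
\item The (RHE) comparison error for the $\phi$- and sub-action parts is of order $\Hseminorm{\alpha}{\phi}\rho^\alpha$. Its coefficient depends on $K,\lambda,L,\LIP(f)$ and does not shrink with $d_g$.
\item The penalty at the exit point is only about $2C\Hseminorm{\alpha}{\phi}d_g^{\alpha/2}\LIP(g)^{-\alpha}\rho^\alpha$.
\item So for small $d_g$ the penalty loses, whatever $\rho$ is.
\item Even the $\xi$-part cannot be charged this way. Its error $5C\Hseminorm{\alpha}{\phi}d_g^{\alpha/2}C_1\rho^\alpha$ scales with $C$ exactly as the penalty does, with a worse constant ($5C_1$ against $2\LIP(g)^{-\alpha}$). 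Enlarging $C$ therefore does not help.
\end{itemize}

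\textbf{The missing idea is a separation of scales} (Case~C in the paper).
\begin{itemize}
\item The tracking segment starts and ends inside the tiny ball $B(\cO_g,\rho)$, with $\rho\sim d_g^{1/2}$. It is only required to stay in the \emph{fixed} ball $B(\cO_g,r)$, where $r=\min\{\Delta(\cO)/(8\LIP(f)),\delta\}$.
\item (RHE) then bounds the comparison error by $O(\rho^\alpha)$, regardless of excursions in between.
\item After the last visit to $B(\cO_g,\rho)$, the orbit is handled pointwise until it leaves $B(\cO_g,r)$.
\item The point just before leaving lies at distance at least $r_0=r/(2\LIP(f))$. This gives a gain $C\Hseminorm{\alpha}{\phi}d_g^{\alpha/2}r_0^\alpha$ at a fixed scale.
\item Since $\rho^\alpha\le(L_1/C)d_g^{\alpha/2}$ and $\rho\le(10L_2)^{-1/\alpha}r_0$, this gain beats both the $\phi$/sub-action error and the $\xi$ error once $C$ is large.
\item Orbits that never leave $B(\cO_g,r)$ are handled by the summability in Remark~\ref{rm_assumption_(b)_sum}(i).
\end{itemize}

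\textbf{Your Step~3 remedy cannot replace this.}
\begin{itemize}
\item (ML) gives no lower bound on the deficit $\tpsi$ along excursions away from $\cO$. For calibrated sub-actions of expanding maps, the deficit vanishes along backward chains that start arbitrarily close to $\cO$ and end far from it. So your claimed uniform positive average gain is false in general.
\item A compactness argument based on strict uniqueness for the given $\phi$ would make $U$ depend on $\phi$. The theorem asserts that $U$ and $C$ depend only on $f$ and $\cO$. This uniformity is used essentially in Theorems~\ref{jointperturbation'} and~\ref{openness}.
\end{itemize}

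\textbf{Uniqueness also needs a device.} The paper proves maximality of $\mu_{\cO_g}$ using only half of the penalty. It then adds the other half, $-C\Hseminorm{\alpha}{\phi}d_g^{\alpha/2}d(\cdot,\cO_g)^\alpha$, which is uniquely maximized by $\mu_{\cO_g}$. Your direct strict inequality for ergodic $\mu\neq\mu_{\cO_g}$ would need something similar.
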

\begin{proof}
    Fix an $\cF$-stably hyperbolic map $f$ and a periodic orbit $\cO$ of $f$. By condition (IS) in Definition~\ref{Fstablyhyperbolic}, there is a neighbourhood $U_a$ of $f$ such that for all $g\in U_a$, the maps $h_g$ and $i_g$ exist, 
	\begin{align}
		\LIP(g) &\le 2 \LIP(f), \quad \text{ and } \label{lipbound}\\
		d_g & < \min \{  \Delta(\cO) /4, \, 1\}.\label{hgbound}
	\end{align}
	By condition (RHE) in Definition~\ref{Fstablyhyperbolic}, there is a neighbourhood $U_b$ of $f$ and constants $K>0$, $\delta>0$, and $\lambda>1$ such that (\ref{assumptionb}) holds for $g\in U_b$. Let $L>0$ be the constant obtained by condition (ML) in Definition~\ref{Fstablyhyperbolic}. Then (i) holds in the neighbourhood 
	\begin{equation}\label{U}
		U \= U_a \cap U_b.
	\end{equation}

	Define constants 
	\begin{align}
		p_0&\=\card \cO, \label{p_0}\\
		r &\= \min  \{  \Delta(\cO) / (8\LIP(f)), \, \delta \}, \label{r}\\
		L_1 &\= 5+2L, \label{L1}\\
        L_2 &\= 2(2K)^\alpha\lambda^\alpha/(\lambda^\alpha-1) ,  \label{L2} \\
		L_3 &\= 1+L+L(2\LIP(f))^\alpha,\quad \text{ and} \label{L3} \\
        C \= \max \{ 1, \, 
          10 L_2 L_1 & r^{-\alpha}(2\LIP(f))^\alpha     , \, 
        2(p_0+L_2L_3)L_1 r^{-\alpha} (2\LIP(f))^\alpha \}.   \label{C}
    \end{align}
	Fix a nonconstant $\phi\in \Holder{\alpha}(X,\R)$ with $\Mmax(f,\phi)=\{ \mu_{\cO} \}$, $g\in U$, and $\xi \in \Holder{\alpha}(X,\R)$, with 
	\begin{equation}\label{xibound}
		\Hseminorm{\alpha}{\xi} \le 5C \Hseminorm{\alpha}{\phi} d_g^{\alpha/2} \quad\text{ and } \quad \Hnorm{\infty}{\xi} \le \Hseminorm{\alpha}{\phi}  d_g^\alpha.
	\end{equation}
	In particular, since $\phi$ is nonconstant then $\Hseminorm{\alpha}{\phi}>0$.

Since $h_g \circ f = g \circ h_g$, the image $\cO_g \= h_g(\cO)$ is a 
    $g$-periodic orbit. For any two distinct $x,\, x'\in\cO$, the triangle 
    inequality and $d_\infty(h_g,\id)\le d_g < \Delta(\cO)/4$ give
    \begin{equation*}
        d(h_g(x),h_g(x')) \ge d(x,x') - 2d_g \ge \Delta(\cO) - \Delta(\cO)/2 = \Delta(\cO)/2.
    \end{equation*}
    In particular $h_g$ is injective on $\cO$, so $p \= \card\cO_g = \card\cO = p_0$
    and $\Delta(\cO_g) \ge \Delta(\cO)/2$.
    Hence $C$ as defined in~(\ref{C}) satisfies 
    \begin{equation}\label{C_p}
        C \ge 2(p + L_2 L_3)L_1 r^{-\alpha}(2\LIP(f))^\alpha.
    \end{equation}

	If $\LIP(g)$ were strictly smaller than $1$ then $g$ would be a strict contraction, with $\Omega(g)$ consisting 
of a single fixed point $q$, and $\mu_{\cO_g} = \delta_q$ the unique 
$g$-invariant probability measure; hence $\Mmax(g, \psi) = \{\delta_q\} 
= \{\mu_{\cO_g}\}$ for every continuous $\psi$, and part~(ii) would hold 
trivially. We therefore assume that $\LIP(g) \ge 1$ for the remainder 
of the proof.
    
    From (\ref{r}) and (\ref{lipbound}),
	\begin{equation}\label{rbound}
		r 
        \le  \Delta(\cO) / (8\LIP(f))
        \le  \Delta(\cO_g) / (2\LIP(g)).
	\end{equation}
	By condition (ML) in Definition~\ref{Fstablyhyperbolic} applied to the map $f$, and Remark~\ref{rm_assumption_(b)_sum}~(ii), there exists $u\in \Holder{\alpha}(X,\R)$ satisfying
	\begin{equation}\label{mane_psi}
		\psi\= \overline{\phi}+u-u\circ f\leq 0 \text{ on } \Omega(f)  \quad \text{ and } \quad
        		\Hseminorm{\alpha}{u}\leq L\Hseminorm{\alpha}{\phi}.
	\end{equation}
	where $\overline{\phi}$ denotes $ \phi- \mpe(f,\phi)$.
	 Define potentials
	\begin{align}
		\Phi &\= \overline{\phi} - C \, \Hseminorm{\alpha}{\phi}\, d_g^{\alpha/2} \, d(\cdot,\cO_g)^\alpha+\xi, \label{Phi}\\
		\psi_g&\=  \overline{\phi}+ u - u \circ g +\xi,\label{psig}\\
		\Psi_g&\=\psi_g- C \, \Hseminorm{\alpha}{\phi}\, d_g^{\alpha/2} d(\cdot,\cO_g)^\alpha=\Phi+u-u\circ g \label{Psig}.
	\end{align}
	Define constants
	\begin{align}
		\tau &\= (3+2L) \Hseminorm{\alpha}{\phi} d_g^\alpha, \label{tau} \\
		\eta &\= \int \! (  \overline{\phi} + \xi   )\,\mathrm{d}\mu_{\cO_g} 
        = \int  \!  \psi_g  \,\mathrm{d}\mu_{\cO_g} 
        = \int  \! \Psi_g \,\mathrm{d}\mu_{\cO_g}
        =\frac{1}{p_0}\sum\limits_{x\in \cO}(\overline{\phi}+\xi)(h_g(x)), \label{eta}
	\end{align}
	where the second equality in (\ref{eta}) follows from (\ref{psig}) and the third from (\ref{Psig}). 

For every $x\in \Omega(g)$, we have $i_g(x) \in \Omega(f)$. So by (\ref{mane_psi}),
$\overline{\phi}(i_g(x)) \le u(f(i_g(x))) - u(i_g(x))$.
By the H\"{o}lder continuity of $\overline{\phi}$, and the fact that $d_\infty(i_g, \id) \le d_g$,
\begin{equation*}
    \overline{\phi}(x) \le \overline{\phi}(i_g(x)) + \Hseminorm{\alpha}{\phi}\, d(x,i_g(x))^\alpha
    \le u(f(i_g(x))) - u(i_g(x)) + \Hseminorm{\alpha}{\phi}\, d_g^\alpha.
\end{equation*}
Using this, the fact that $i_g\circ g=f\circ i_g$ (see~(i) and condition (IS) in Definition~\ref{Fstablyhyperbolic}), and
(\ref{psig}), (\ref{xibound}), (\ref{mane_psi}), (\ref{tau}), we obtain
\begin{align}\label{psig<tau}
    \psi_g(x) 
    &= \overline{\phi}(x) + u(x) - u(g(x)) + \xi(x)
    \leq u(x) - u(i_g(x)) + u(i_g(g(x))) - u(g(x))
      + 2\Hseminorm{\alpha}{\phi}\, d_g^\alpha  \notag \\
    &\leq \Hseminorm{\alpha}{u}\, d(x,i_g(x))^\alpha
      + \Hseminorm{\alpha}{u}\, d(i_g(g(x)),g(x))^\alpha
      + 2\Hseminorm{\alpha}{\phi}\, d_g^\alpha
    \leq (2L+2)\Hseminorm{\alpha}{\phi}\, d_g^\alpha 
    < \tau.
\end{align}
	 By (\ref{eta}), (\ref{xibound}), and (\ref{hgbound}), we have
	\begin{equation}\label{etabound}
		\eta 
        = \int  \!   \overline{\phi} + \xi \,\mathrm{d}\mu_{\cO_g} 
        \ge \frac{1}{p_0} \sum_{x\in \cO} (\overline{\phi}\circ  h_g)(x) - \Hseminorm{\alpha}{\phi} d_g^\alpha 
        \ge \int \! \overline{\phi}\,\mathrm{d}\mu_{\cO} - 2\Hseminorm{\alpha}{\phi} d_g^\alpha  
        \ge - 2\Hseminorm{\alpha}{\phi} d_g^\alpha.
	\end{equation}
	Using (\ref{tau}), (\ref{etabound}), and (\ref{L1}), we estimate
	\begin{equation}\label{tau-eta}
		\tau-\eta\leq (5+2L)\Hseminorm{\alpha}{\phi} d_g^{\alpha} \le L_1 \Hseminorm{\alpha}{\phi} d_g^{\alpha}. 
	\end{equation}
	By (\ref{eta}) and (\ref{psig<tau}), we have
	$\eta=\int  \!  \psi_g  \,\mathrm{d}\mu_{\cO_g} <\tau$.
	So we can define 
	\begin{equation}\label{rho}
		\rho \= \bigl( C \Hseminorm{\alpha}{\phi} d_g^{\alpha/2} \big/ (\tau-\eta) \bigr)^{-1/\alpha}>0.
	\end{equation}
	By (\ref{Psig}), (\ref{psig<tau}), and (\ref{rho}), we have
	\begin{equation}\label{psihC<eta}
		\Psi_{g}(x) 
        < \tau-C \, \Hseminorm{\alpha}{\phi}\, d_g^{\alpha/2}\rho^\alpha 
        =\eta 
        \quad \text{ if } \quad x\notin B(\cO_g, \rho).
	\end{equation}
	Moreover, using (\ref{tau-eta}) and (\ref{rho}), we estimate
	\begin{equation}\label{rhoe}
		\rho \le (L_1/C)^{1/\alpha} \cdot d_g^{1/2} .
	\end{equation}
	
	We wish to prove that $ \mu_{\cO_g}  \in \Mmax (g, \Phi)$. Since every $g$-invariant probability measure is supported on $\Omega(g)$ (cf.~\cite[Theorem~6.15~(i) and Theorem~5.6~(i)]{Wa82}), this is equivalent to proving that $\mu_{\cO_g} \in \Mmax \bigl(\Omega(g), g|_{\Omega(g)}, \Phi|_{\Omega(g)} \bigr)$. By (\ref{Psig}), \cite[Proposition~2.2]{Je19}, and (\ref{eta}), it suffices to establish that 
	\begin{equation}\label{aim}
		\liminf_{n\to +\infty} \frac{1}{n} S_n^g \Psi_g (x)=\liminf_{n\to +\infty} \frac{1}{n} S_n^g \Phi (x) \le \eta,\quad\text{for every }	x\in \Omega(g).
	\end{equation}

Fix $x\in \Omega(g)$. We recursively construct a sequence $\{x_t\}_{t=1}^{s+1}$ 
of points in $\cO^g(x)$ and a sequence $\{n_t\}_{t=1}^{s}$ of positive integers, 
for some $s\in \N \cup \{+\infty\}$, such that
	\begin{equation}\label{contruction}
		x_{t+1}  = g^{n_t}(x_t) \quad \text{ and } \quad S_{n_t}^g \Psi_{g}(x_t) \le n_t \eta \quad \text{ for every } t\in \N \cap [0,s].
	\end{equation}

	\smallskip
	\emph{Base step.} Define $x_1 \= x$.
	
	\smallskip
	\emph{Recursive step.} Assume that for some $t\in \N$, the finite sequences $\{x_i\}_{i=1}^{t}$ and $\{n_i\}_{i=1}^{t-1}$ are defined. 
	Consider the following three cases.
	
	\smallskip
	\emph{Case~A.} Assume $x_t\notin B(\cO_g, \rho)$. In this case, we define $n_t \= 1$ and $x_{t+1} \= g(x_t)$. By (\ref{psihC<eta}),
	\begin{equation}\label{case1}
		S_{n_t}^g \Psi_{g}(x_t) = \Psi_{g}(x_t)< \eta = n_t \eta.
	\end{equation}
	
	\smallskip
	\emph{Case~B.} Assume $\cO^g(x_t) \subseteq B(\cO_g, r)$. Let $y\in \cO_g$ be such that $d(x_t,y) = d(x_t, \cO_g)$. By (\ref{rbound}), we have $d(g(x_t), g(y)) \leq\LIP(g)d(x_t,y)< r\LIP(g)\leq  2^{-1} \Delta(\cO_g)$, which implies $d(g(x_t), g(y)) = d(g(x_t), \cO_g)\le r$. Using an inductive argument, we conclude that $d\bigl(g^l(x_t), g^l(y)\bigr)< 2^{-1} \Delta(\cO_g)$ for all $l\in \N$ and 
	\begin{equation}\label{dequal}
		d\bigl(g^l(x_t), g^l(y)\bigr) = d\bigl(g^l(x_t), \cO_g\bigr)< r \quad \text{ for all } \quad l\in \N.
	\end{equation}
	By (\ref{Psig}) and (\ref{dequal}), we have
	\begin{equation*}\label{c2e1}
			\liminf_{n\to +\infty} \frac{1}{n} S_n^g \Psi_{g} (x) 
            = \liminf_{n\to +\infty} \frac{1}{n} S_n^g \Psi_{g} (x_t) 
			\le \liminf_{n\to +\infty} \frac{1}{n} \biggl( S_n^g \Psi_{g} (y) + \Hseminorm{\alpha}{\Psi_{g}} \sum_{l=0}^{n-1} d\bigl(g^l(x_t), g^l(y)\bigr)^\alpha \biggr).
	\end{equation*}
	As $r\le \delta$ (see (\ref{r})), by Remark~\ref{rm_assumption_(b)_sum} we have
	\begin{equation*}\label{c2e2}
		\sum_{l=0}^{n-1} d\bigl(g^l(x_t), g^l(y)\bigr)^\alpha 
        \le \frac{4 K^\alpha r^\alpha\lambda^\alpha}{\lambda^\alpha-1}.
	\end{equation*}
	Combining the above two inequalities and (\ref{eta}), we obtain 
	$\liminf\limits_{n\to +\infty} \frac{1}{n} S_n^g \Psi_{g} (x) \leq  \eta$,
	which is precisely the required (\ref{aim}). This completes the recursive step.
	
	\smallskip
	\emph{Case~C.} Assume $\cO^g(x_t) \nsubseteq B(\cO_g, r)$ but $x_t \in B(\cO_g, \rho)$. Let $y\in \cO_g$ satisfy $d(x_t,y)= d(x_t, \cO_g)$. Noting that $d_g\le 1$ by (\ref{hgbound}), $C\ge 10 L_2 L_1 r^{-\alpha}(2\LIP(f))^\alpha$ by (\ref{C}), and our assumption that $\LIP(g)\ge 1$, using (\ref{rhoe}) and (\ref{lipbound}), we obtain that
	\begin{equation}\label{rhobound}
		\rho \le   (10L_2)^{-1/\alpha}   \cdot 2^{-1} r / \LIP(f)  \le  r / \LIP(g) \le r.
	\end{equation}
	Now define integers 
	\begin{align}
		N &\= \min \bigl\{ i\in\Z : i \ge -1 ,\, d\bigl(g^{i+1}(x_t), g^{i+1}(y)\bigr) \ge r \bigr\}, \label{N}\\
		m &\= \max \bigl\{ i\in\Z : 1\le i \le N ,\, d\bigl(g^{i-1}(x_t), g^{i-1}(y)\bigr) < \rho \bigr\}, \label{m}
	\end{align}
	where the existence of $N$ follows from the assumptions of this case, and (\ref{rhobound}) implies that $1 \le N$, so $m$ is well defined. Using (\ref{N}), and an argument analogous to the one used to prove (\ref{dequal}) in Case~B, yields
	\begin{equation}\label{deq}
		d\bigl(g^i(x_t), g^i(y)\bigr) = d\bigl(g^i(x_t), \cO_g\bigr) <r \quad \text{ for all } 0\le i\le N. 
	\end{equation}
By definition of $N$ (cf.~(\ref{N})), we have 
    $d\bigl(g^N(x_t), g^N(y)\bigr) < r$ but 
    $d\bigl(g^{N+1}(x_t), g^{N+1}(y)\bigr) \ge r$.
    Since $d\bigl(g^{N+1}(x_t),g^{N+1}(y)\bigr) \le \LIP(g)\, 
    d\bigl(g^N(x_t),g^N(y)\bigr)$, we obtain
    \begin{equation}\label{Case_C_g_N}
        d\bigl(g^N(x_t), g^N(y)\bigr) 
        \ge r/\LIP(g)
        \ge r/(2\LIP(f))
        \= r_0,
    \end{equation}
    where the last inequality uses~(\ref{lipbound}).
    
	In this case we define $n_t \= N+1$ and $x_{t+1} \= g^{N+1}(x_t)$. 
	
	Next, we estimate $S_{n_t}^g \Psi_{g}(x_t)$ so as to deduce (\ref{contruction}). Since $\Psi_g\leq \psi_g$, direct calculation gives
	\begin{equation}\label{c3e0}
		\begin{aligned}
			S_{n_t}^g \Psi_{g} (x_t) 
            &\le S_m^g \psi_{g}(x_t) + S_{N-m}^g \Psi_{g} (g^m(x_t)) + \Psi_{g} \bigl(g^N(x_t)\bigr) \\
			&\le S_m^g \psi_g(y) + \abs{ S_m^g \psi_g(x_t) - S_m^g \psi_g(y) } +  S_{N-m}^g \Psi_{g} (g^m(x_t)) + \Psi_{g} \bigl(g^N(x_t)\bigr).
		\end{aligned}
	\end{equation}
	Next we estimate the four terms on the righthand side of (\ref{c3e0}) separately. For the first term, write $m=pq+l$ for some $q\in \N_0$ and $0\le l \le p-1$, then by (\ref{eta}) and (\ref{psig<tau}), we have
	\begin{equation}\label{c3e1}
		S_m^g \psi_g(y) = q S_p^g \psi_g(y) + S_l^g \psi_g (y) \le pq \eta + l \tau \le m \eta + (p-1)(\tau-\eta).
	\end{equation}
For the second term, since $d(g^i(x_t),g^i(y)) < r \le \delta$ for all $0\le i\le m-1$ (by \eqref{deq}), and the endpoints of this orbit segment satisfy $d(x_t,y)<\rho$ and $d(g^{m-1}(x_t),g^{m-1}(y))<\rho$ (by the definitions of $x_t$ and $m$), applying condition (RHE) in Definition~\ref{Fstablyhyperbolic} to this segment yields
\begin{equation*}
        d(g^i(x_t),g^i(y)) < 2K\lambda^{-\min\{i, \, m-1-i\}}\rho \quad \text{ for each } 0\le i\le m-1.
    \end{equation*}
    Therefore, using (\ref{psig}), condition (ML) in Definition~\ref{Fstablyhyperbolic}, (\ref{lipbound}),
(\ref{xibound}), (\ref{L2}), and (\ref{L3}),
\begin{align}
    \abs{ S_m^g \psi_g(x_t) - S_m^g \psi_g(y) } 
    &\le \Hseminorm{\alpha}{\psi_g}
       \sum_{i=0}^{m-1} d \bigl( g^i(x_t), g^i(y) \bigr)^\alpha
    \label{c3e2}\\
    &\le \Hseminorm{\alpha}{\psi_g}
       (2K\rho)^\alpha \frac{2\lambda^\alpha}{\lambda^\alpha-1}
    \le L_2\rho^\alpha\,\Hseminorm{\alpha}{\phi}
       \bigl(L_3+5Cd_g^{\alpha/2}\bigr). \notag
\end{align}
	For the third term, by (\ref{m}), (\ref{deq}), and (\ref{psihC<eta}), we have
	\begin{equation}\label{c3e3}
		S_{N-m}^g \Psi_{g} (g^m(x_t)) \le (N-m)\eta.
	\end{equation}
	For the fourth term, by (\ref{Psig}), (\ref{psig<tau}), (\ref{N}), (\ref{deq}), and (\ref{Case_C_g_N}), we have
	\begin{equation}\label{c3e4}
		\Psi_{g} \bigl(g^N(x_t)\bigr) \le \tau - C\Hseminorm{\alpha}{\phi} d_g^{\alpha/2} d\bigl(g^N(x_t), \cO_g\bigr)^\alpha \le \tau - C\Hseminorm{\alpha}{\phi} d_g^{\alpha/2} r_0^\alpha.
	\end{equation}
	Finally, combining (\ref{c3e0})--(\ref{c3e4}), and using (\ref{tau-eta}) and (\ref{rhoe}), we obtain
	\begin{equation}\label{case3}
		\begin{aligned}
			S_{n_t}^g \Psi_{g} (x_t) - n_t \eta 
			& \le p(\tau-\eta) + \bigl(L_3+5Cd_g^{\alpha/2} \bigr)L_2 \rho^\alpha \Hseminorm{\alpha}{\phi} - C\Hseminorm{\alpha}{\phi} d_g^{\alpha/2} r_0^\alpha \\
			&  \le p L_1 \Hseminorm{\alpha}{\phi} d_g^\alpha + L_1 L_2 L_3 \Hseminorm{\alpha}{\phi} d_g^{\alpha/2} + 5CL_2 \rho^\alpha d_g^{\alpha/2}\Hseminorm{\alpha}{\phi} - C r_0^\alpha \Hseminorm{\alpha}{\phi} d_g^{\alpha/2}.
		\end{aligned}
	\end{equation}
	Since $d_g\le 1$ (cf.\ (\ref{hgbound})) and $C\ge 2(p+L_2L_3)L_1 r_0^{-\alpha}$ (cf.\ (\ref{C_p})), we obtain
	\begin{equation}\label{final1}
		p L_1 \Hseminorm{\alpha}{\phi} d_g^\alpha + L_1 L_2 L_3 \Hseminorm{\alpha}{\phi} d_g^{\alpha/2} \le 2^{-1} C  r_0^\alpha \Hseminorm{\alpha}{\phi} d_g^{\alpha/2}.
	\end{equation}
	Using (\ref{rhobound}) and (\ref{Case_C_g_N}), we obtain
	$5CL_2 \rho^\alpha d_g^{\alpha/2}\Hseminorm{\alpha}{\phi} 
    \le 2^{-1} C r_0^\alpha \Hseminorm{\alpha}{\phi} d_g^{\alpha/2}$.
	This, combined with (\ref{case3}) and (\ref{final1}), gives
	\begin{equation}\label{case3final}
		S_{n_t}^g \Psi_{g} (x_t) \le n_t \eta.
	\end{equation}
So the required inequality (\ref{contruction}) holds, and therefore the recursive step is complete.

    If the recursion never terminates (i.e.,~$s=+\infty$, meaning Case~B never 
occurs),
    then setting $N_t \= \sum_{i=1}^{t} n_i$ for every $t\in \N$, by (\ref{case1}) and (\ref{case3final}), we have
	\begin{equation*}
		\liminf_{n\to +\infty} \frac{1}{n}S_n^g \Psi_g (x) 
        \le \liminf_{t\to +\infty} \frac{1}{N_t} \sum_{i=1}^{t} S_{n_i}^g \Psi_g(x_i)
        \le \lim_{t\to +\infty} \frac{1}{N_t} \sum_{i=1}^{t} n_i \eta 
        \= \eta.
	\end{equation*}
Thus (\ref{aim}) holds, so $\mu_{\cO_g}\in \Mmax(g,\Phi)$.
Now define $h \= -C \, \Hseminorm{\alpha}{\phi}\, d_g^{\alpha/2} \, d(\cdot,\cO_g)^\alpha$.
Since $h \le 0$ with $h = 0$ exactly on $\cO_g$, the maximum ergodic average 
$\mpe(g, h) = 0$ is achieved precisely by those $g$-invariant measures supported 
on $\cO_g$. Since $\cO_g$ is a $g$-periodic orbit, the unique $g$-invariant measure 
supported on $\cO_g$ is $\mu_{\cO_g}$, so $\Mmax(g, h) = \{\mu_{\cO_g}\}$.
This, combined with the fact that $\mu_{\cO_g}\in\Mmax(g,\Phi)$, gives
$\Mmax(g, \Phi + h) = \{\mu_{\cO_g}\}$.
As $\Phi + h = \phi - \mpe(f,\phi) - 2C\,\Hseminorm{\alpha}{\phi}\,d_g^{\alpha/2}
d(\cdot,\cO_g)^\alpha + \xi$, we conclude that $\{\mu_{\cO_g} \} = 
\Mmax \bigl(g,\phi - 2C\,\Hseminorm{\alpha}{\phi}\,d_g^{\alpha/2}\,d(\cdot,\cO_g)^\alpha+\xi \bigr)$,
as required.
\end{proof}

Now we prove a $C^1$ analogue of Theorem~\ref{jointperturbation},
i.e.,~the joint perturbation theorem for $C^1$ potentials on a compact smooth manifold $M$.

\begin{theorem}[\bf  Joint perturbation for $C^1$ potentials]\label{jointperturbation'}
	Let $M$ be a compact smooth manifold equipped with a Riemannian metric, and let $\cF$ be a topological space consisting of Lipschitz self-maps of $M$. 
    If $f\in \cF$ is $\cF$-stably hyperbolic, and $\cO$ is an $f$-periodic orbit,
	then there exist a neighbourhood $U$ of $f$, and $C>0$, such that for all $g\in U$, the following hold:
	\begin{enumerate}[label=\rm{(\roman*)}]
		\smallskip
		\item The maps $h_g$ and $i_g$ in condition (IS) in Definition~\ref{Fstablyhyperbolic} exist.
		
		\smallskip
		\item Denote $\cO_g \= h_g(\cO)$ and $d_g \= \max\{ d_\infty(h_g,\id), \, d_\infty(i_g,\id)\}$. There exists $v_g\in C^1(M,\R)$ such that
		\begin{equation}\label{e_property_v_g}
			\Hnorm{\infty}{\mathrm{D} v_g} \le 3C  d_g^{1/2}\quad \text{ and } \quad \mu_{\cO_g} \in\Mmax(g,v_g),
        \end{equation}
		and for all $\phi\in C^1(M,\R)$ with $\Mmax(f,\phi) = \{\mu_{\cO}\}$,  $$\Mmax(g,\phi+\Hnorm{\infty}{\mathrm{D}\phi}v_g) = \{\mu_{\cO_g}\}.$$
	\end{enumerate}

\end{theorem}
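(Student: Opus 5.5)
The plan is to deduce the $C^1$ statement from Theorem~\ref{jointperturbation} with $\alpha=1$. A $C^1$ function vanishing on $\cO_g$ and maximal there has zero derivative on $\cO_g$, so it cannot reproduce the cone-shaped penalty $-d(\cdot,\cO_g)$. The idea is to use the error term $\xi$ of Theorem~\ref{jointperturbation} to absorb the gap between this penalty and a $C^1$ smoothing of it.

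Take $U$ and $C$ as given by Theorem~\ref{jointperturbation} for $\alpha=1$; this gives (i) immediately. We use two elementary facts.
\begin{enumerate}[label=\rm{(\alph*)}]
\item For $\phi\in C^1(M,\R)$ and the Riemannian distance $d$, we have $\Hseminorm{1}{\phi}\le\Hnorm{\infty}{\mathrm{D}\phi}$.
\item If $\Mmax(g,\Phi)=\{\mu_{\cO_g}\}$ and $w\le 0$ on $M$ with $w=0$ on $\cO_g$, then $\Mmax(g,\Phi+w)=\{\mu_{\cO_g}\}$.
\end{enumerate}

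\textbf{Construction of $v_g$.} Fix $g\in U$. If $d_g=0$, set $v_g\=0$; the conclusion is then Theorem~\ref{jointperturbation} with $\xi=0$. If $d_g>0$, put $\varepsilon\=d_g^{1/2}/(2C)$. I will build a $C^1$ function $\varrho\:M\to\R$ with
\[
0\le\varrho\le d(\cdot,\cO_g),\qquad \varrho=0 \text{ on } \cO_g,\qquad d(\cdot,\cO_g)-\varrho\le\varepsilon,\qquad \Hnorm{\infty}{\mathrm{D}\varrho}\le 3/2 .
\]
Since $d(\cdot,\cO_g)$ is $1$-Lipschitz but not smooth (cut locus), first mollify it on $M$ (e.g.\ Greene--Wu smoothing, or a partition of unity with local convolutions). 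This gives a $C^1$ function $e$ with $\abs{e-d(\cdot,\cO_g)}\le\varepsilon/4$ and $\Hnorm{\infty}{\mathrm{D}e}\le 1+1/8$. Then set $\varrho\=\theta\circ e$, where $\theta\:\R\to[0,\infty)$ is $C^1$ with $\theta=0$ on $(-\infty,\varepsilon/4]$, $0\le\theta'\le 4/3$, and $t-\varepsilon\le\theta(t)\le t-\varepsilon/4$ for $t\ge\varepsilon/4$. A short check gives the four properties, with constants adjustable if needed. Define
\[
v_g\=-2C\,d_g^{1/2}\,\varrho .
\]
Then $\Hnorm{\infty}{\mathrm{D}v_g}\le 3C d_g^{1/2}$. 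Also $v_g\le 0$ with $v_g=0$ on $\cO_g$, so $\mu_{\cO_g}\in\Mmax(g,v_g)$, which proves~\eqref{e_property_v_g}.

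\textbf{Main step.} Let $\phi\in C^1(M,\R)$ be nonconstant with $\Mmax(f,\phi)=\{\mu_\cO\}$. Decompose
\[
\phi+\Hnorm{\infty}{\mathrm{D}\phi}v_g=\Bigl(\phi-2C\Hseminorm{1}{\phi}d_g^{1/2}d(\cdot,\cO_g)+\xi\Bigr)+w,
\]
where $\xi\=2C\Hseminorm{1}{\phi}d_g^{1/2}\bigl(d(\cdot,\cO_g)-\varrho\bigr)$ and $w\=-2C\bigl(\Hnorm{\infty}{\mathrm{D}\phi}-\Hseminorm{1}{\phi}\bigr)d_g^{1/2}\varrho$.

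By~(a) and $\varrho\ge0$, we have $w\le0$, and $w=0$ on $\cO_g$. For $\xi$, the bound $d(\cdot,\cO_g)-\varrho\le\varepsilon$ gives
\[
\Hnorm{\infty}{\xi}\le 2C\Hseminorm{1}{\phi}d_g^{1/2}\varepsilon=\Hseminorm{1}{\phi}d_g .
\]
Since the Lipschitz constant of $d(\cdot,\cO_g)-\varrho$ is at most $1+3/2$, we get $\Hseminorm{1}{\xi}\le 5C\Hseminorm{1}{\phi}d_g^{1/2}$. These are exactly the hypotheses of Theorem~\ref{jointperturbation}(ii) with $\alpha=1$. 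Hence the bracket has $\{\mu_{\cO_g}\}$ as its unique maximizing measure, and~(b) transfers this to $\phi+\Hnorm{\infty}{\mathrm{D}\phi}v_g$. The constants $3$ and $5$ in the statement match this bookkeeping, which suggests the intended route.

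\textbf{Main obstacle.} The delicate part is the smoothing: producing a global $C^1$ approximation of $d(\cdot,\cO_g)$ that has gradient at most $3/2$, vanishes exactly on $\cO_g$, lies below the distance function, and has uniform error below $d_g^{1/2}/(2C)$.

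The remaining loose end is constant $\phi$. Then $\Mmax(f,\phi)=\{\mu_\cO\}$ means $\mu_\cO$ is the only $f$-invariant measure, so $\Omega(f)=\cO$, and the claim becomes unique ergodicity of $g$ on $\Omega(g)$. I would handle this separately. Condition (IS) with $i_g$ gives $\Omega(g)\subseteq B(\cO,d_g)$, and the Case~B argument together with (RHE) should force every point of $\Omega(g)$ to converge to $\cO_g$, so that every $g$-invariant measure is $\mu_{\cO_g}$. This might require shrinking $U$.
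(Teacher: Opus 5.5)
For nonconstant $\phi$ your argument is correct and is essentially the paper's. You take $U,C$ from Theorem~\ref{jointperturbation} with $\alpha=1$, replace the cone $d(\cdot,\cO_g)$ by a $C^1$ approximation with gradient below $3/2$ and uniform error $d_g^{1/2}/(2C)$, and let $\xi$ absorb the difference. The smoothing you call the main obstacle is exactly what the paper imports from \cite[Theorem~2.7]{HLMXZ25}.

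The differences are in the bookkeeping.
\begin{itemize}
\item The paper's approximant $w$ need not vanish on $\cO_g$. It therefore gets $\mu_{\cO_g}\in\Mmax(g,v_g)$ by applying Theorem~\ref{jointperturbation} to $\phi_n=\max\{-1/n,-d(\cdot,\cO)\}$ and letting $n\to\infty$. Your post-composition with $\theta$ gives $v_g\le 0$ with $v_g=0$ on $\cO_g$, so this property is immediate.
\item Your auxiliary nonpositive term needs only $\Hseminorm{1}{\phi}\le\Hnorm{\infty}{\mathrm{D}\phi}$. The paper tacitly uses equality, which is valid on a connected manifold.
\item Your separate treatment of $d_g=0$ covers a case where the paper's strict bound $\Hnorm{\infty}{w+d(\cdot,\cO_g)}<d_g^{1/2}/(2C)$ cannot hold.
\end{itemize}
One small fix: with $\theta(t)\ge t-\varepsilon$ you only get $d(\cdot,\cO_g)-\varrho\le 5\varepsilon/4$. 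Require $\theta(t)\ge t-3\varepsilon/4$ instead. For example, let $\theta'$ rise linearly from $0$ at $\varepsilon/4$ to $1$ at $5\varepsilon/4$ and stay equal to $1$ afterwards.

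The constant-$\phi$ case is not covered by the paper's proof either. Theorem~\ref{jointperturbation}(ii) assumes $\phi$ nonconstant, and Theorem~\ref{openness} only uses nonconstant potentials. However, the route you sketch for this case has a step that does not hold. From $\cM(M,f)=\{\mu_\cO\}$ you cannot conclude $\Omega(f)=\cO$, because nonwandering points need not lie in the support of any invariant measure. For example, in the subshift of $0$--$1$ sequences with at most two $1$'s, the only invariant measure is $\delta_{0^\infty}$, yet every sequence with a single $1$ is nonwandering. So (IS) gives you no inclusion $\Omega(g)\subseteq B(\cO,d_g)$.

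Here is a repair that bypasses $\Omega$ entirely.
\begin{itemize}
\item Since $f$ is uniquely ergodic, the nonconstant potential $d(\cdot,\cO_g)$ satisfies $\Mmax(f,d(\cdot,\cO_g))=\{\mu_\cO\}$.
\item Theorem~\ref{jointperturbation}(ii) with $\xi=0$ then shows that $\mu_{\cO_g}$ maximizes $c\,d(\cdot,\cO_g)$, where $c=1-2C\Hseminorm{1}{d(\cdot,\cO_g)}d_g^{1/2}\ge 1-2Cd_g^{1/2}$.
\item Using (IS), shrink $U$ so that $d_g<(2C)^{-2}$; this gives $c>0$.
\item Hence $\int d(\cdot,\cO_g)\,\mathrm{d}\nu\le 0$ for every $\nu\in\cM(M,g)$, so $\nu(\cO_g)=1$ and $\cM(M,g)=\{\mu_{\cO_g}\}$. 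This is exactly the claim for constant $\phi$.
\end{itemize}
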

\begin{proof}
	Without loss of generality, assume that $\diam M=1$. 
	Let the neighbourhood $U$ and constant $C>0$ be as given by Theorem~\ref{jointperturbation} for the case $\alpha\=1$.
    Then (i) follows from Theorem~\ref{jointperturbation}~(i).

 	Fix $g\in U$ and $\phi\in C^1(M,\R)$ with $\Mmax(f,\phi)=\{\mu_{\cO}\}$.
	 By \cite[Theorem~2.7]{HLMXZ25}, there exists $w\in C^1(M,\R)$ satisfying
	 \begin{equation}\label{e.def.w}
	 	\Hnorm{\infty}{\mathrm{D}w} < 3/2 \quad \text{ and } \quad
        \Hnorm{\infty}{w+d(\cdot, \cO_g)} < d_g^{1/2} \big/(2C).
	 \end{equation}
	Define $\xi \= 2C \Hnorm{\infty}{\mathrm{D}\phi} d_g^{1/2} (d(\cdot,\cO_g) + w) $. By (\ref{e.def.w}), we obtain
	\begin{equation}\label{e.xi}
		\Hseminorm{1}{\xi} \le 5C\Hnorm{\infty}{\mathrm{D}\phi} d_g^{1/2} \quad \text{ and } \quad 
        \Hnorm{\infty}{\xi} \le \Hnorm{\infty}{\mathrm{D}\phi} d_g.
	\end{equation}
	Hence denoting $v_g \= 2Cd_g^{1/2}w$, by Theorem~\ref{jointperturbation}~(ii) and (\ref{e.def.w}), we obtain $\Mmax(g, \phi+\Hnorm{\infty}{\mathrm{D}\phi} v_g) = \{\mu_{\cO_g}\}$ and $\Hnorm{\infty}{\mathrm{D}v_g} \le 3Cd_g^{1/2}$. 
	
	So it suffices to show that $\mu_{\cO_g}\in \Mmax(g,v_g)$. 
    
	For each $n\in \N$, define $\phi_n \= \max \{-1/n, \, -d(\cdot, \cO) \} \in\Holder{1}(M,\R)$. Then clearly for each $n\in \N$,
	\begin{equation}\label{phin}
		\Hseminorm{1}{\phi_n} = 1, \qquad -1/n \le \phi_n \le 0, \qquad\text{ and } \cO=\phi_n^{-1}(0).
	\end{equation}
	Obviously, $\Mmax(f,\phi_n) = \{\mu_{\cO}\}$ for each $n\in \N$. 
	Fix arbitrary $\mu\in \MMM(M,g)$ and $n\in\N$.
	By Theorem~\ref{jointperturbation}~(ii) and (\ref{e.def.w}), we have $\Mmax(g, \phi_n + v_g) = \{ \mu_{\cO_g}\}$. So $\int \! ( \phi_n + v_g)  \,\mathrm{d}\mu_{\cO_g} > \int \!  (\phi_n + v_g)  \,\mathrm{d}\mu$. 
	By (\ref{phin}), $\int \! \phi_n  \,\mathrm{d}\mu \geq - \frac1n \ge \int \! \phi_n  \,\mathrm{d}\mu_{\cO_g} -\frac1n $. So we have $\int \!  v_g \,\mathrm{d}\mu_{\cO_g} > \int \! v_g  \,\mathrm{d}\mu - \frac1n$ for all $n\in\N$, 
	and hence $\int \!  v_g \,\mathrm{d}\mu_{\cO_g} \ge \int \! v_g  \,\mathrm{d}\mu $. 
This means that    $\mu_{\cO_g} \in \Mmax(g,v_g)$, as required.	
\end{proof}

To conclude this section, we now deduce a sufficient condition for a pair $(f,\phi)$ to belong to the interior of the joint locking set.

\begin{theorem}[\bf Interior Condition]\label{openness}
	Let $\alpha \in (0,1]$, $(X,d)$ be a compact metric space, and $(M, \rho)$ be a compact smooth manifold equipped with a Riemannian metric. 
	\begin{enumerate}[label=\rm{(\roman*)}]
		\smallskip
		\item If $\cF$ is a topological space consisting of Lipschitz self-maps of $X$, and $f\in\cF$ is $\cF$-stably hyperbolic, and $(f,\phi) \in \Lock \bigl(\cF, \Holder{\alpha}(X,\R)\bigr)$ where $\phi$ is nonconstant,  then $(f,\phi)$ belongs to the interior of the joint locking set $ \Lock\bigl(\cF, \Holder{\alpha}(X,\R)\bigr)$.
		
		\smallskip
		\item If $\cF$ is a topological space consisting of Lipschitz self-maps of $M$, and $f\in \cF$ is $\cF$-stably hyperbolic, and $(f,\phi) \in \Lock\bigl(\cF, C^1(M,\R)\bigr)$ where $\phi$ is nonconstant, then $(f,\phi)$ belongs to the interior of the joint locking set $ \Lock\bigl(\cF, C^1(M,\R)\bigr)$.
	\end{enumerate}
\end{theorem}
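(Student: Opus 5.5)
The plan is to combine the locking neighbourhood of $\phi$ for the single map $f$ with the joint perturbation theorems, which will produce a product neighbourhood of $(f,\phi)$ on which the perturbed orbit $\cO_g$ is the unique maximizing orbit. Concretely, since $(f,\phi)$ has the locking property, there is a periodic orbit $\cO$ of $f$ and some $\ve>0$ such that $\Mmax(f,\tilde\phi)=\{\mu_\cO\}$ for every $\tilde\phi$ in the ball $B(\phi,\ve)$ of the relevant Banach space. Shrinking $\ve$, we may assume every element of $B(\phi,\ve)$ is nonconstant, since $\phi$ is nonconstant. Let $U$ and $C$ be given by Theorem~\ref{jointperturbation} (resp.\ Theorem~\ref{jointperturbation'}) for $f$ and $\cO$. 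By (IS), we may shrink $U$ so that $d_g$ is as small as we like for $g\in U$.

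\emph{Main claim.} We will show that $\Mmax(g,\psi)=\{\mu_{\cO_g}\}$ for all $g\in U'$ and all $\psi\in B(\phi,\ve/2)$, for a suitable smaller neighbourhood $U'\subseteq U$. Given this, take the neighbourhood $U'\times B(\phi,\ve/4)$ of $(f,\phi)$. For $(g,\psi)$ in it and any $\psi'\in B(\psi,\ve/4)$, we have $\psi'\in B(\phi,\ve/2)$, so $\Mmax(g,\psi')=\{\mu_{\cO_g}\}=\Mmax(g,\psi)$. The orbit $\cO_g=h_g(\cO)$ depends only on $g$, so $(g,\psi)$ has the locking property and the neighbourhood lies in $\Lock$.

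\emph{Hölder case.} Given $g$ and $\psi$, write $D_g\=d(\cdot,\cO_g)^\alpha$, so $\Hseminorm{\alpha}{D_g}\le 1$ and $\Hnorm{\infty}{D_g}\le(\diam X)^\alpha$. Put $t\=2Cd_g^{\alpha/2}$. We seek $\hat\phi\=\psi+t\,a\,D_g$ with $a=\Hseminorm{\alpha}{\hat\phi}$, so that $\psi=\hat\phi-2C\Hseminorm{\alpha}{\hat\phi}\,d_g^{\alpha/2}D_g$; this is exactly the form in Theorem~\ref{jointperturbation}~(ii) with $\xi=0$. The map $a\mapsto\Hseminorm{\alpha}{\psi+taD_g}$ on $[0,+\infty)$ is $t$-Lipschitz, and $t<1$ once $U'$ is small. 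By the contraction principle it has a fixed point $a$, with $a\le\Hseminorm{\alpha}{\psi}/(1-t)$, which is bounded uniformly for $\psi\in B(\phi,\ve/2)$. Then
\begin{equation*}
\Hnorm{\alpha}{\hat\phi-\psi}\le t\,a\,\bigl(1+(\diam X)^\alpha\bigr),
\end{equation*}
which tends to $0$ uniformly as $d_g\to0$. Shrinking $U'$, this gives $\hat\phi\in B(\phi,\ve)$. Hence $\hat\phi$ is nonconstant with $\Mmax(f,\hat\phi)=\{\mu_\cO\}$, and Theorem~\ref{jointperturbation}~(ii) yields $\Mmax(g,\psi)=\{\mu_{\cO_g}\}$.

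\emph{$C^1$ case.} This is identical, using Theorem~\ref{jointperturbation'}~(ii) with the $C^1$ function $v_g$. We solve $\hat\phi\=\psi-a\,v_g$ with $a=\Hnorm{\infty}{\mathrm{D}\hat\phi}$, so that $\psi=\hat\phi+\Hnorm{\infty}{\mathrm{D}\hat\phi}\,v_g$. The map $a\mapsto\Hnorm{\infty}{\mathrm{D}\psi-a\,\mathrm{D}v_g}$ is $3Cd_g^{1/2}$-Lipschitz, hence a contraction for $g$ near $f$, and again has a fixed point. We also need $\Hnorm{C^1}{a\,v_g}\to0$ as $d_g\to0$. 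The derivative part is at most $3Ca\,d_g^{1/2}$. For the sup part, I would note that adding a constant to $v_g$ changes neither $\Mmax(g,v_g)$ nor the conclusion, so we may normalise $v_g$ to vanish at a point of $\cO_g$. Then $\Hnorm{\infty}{v_g}\le\diam M\cdot\Hnorm{\infty}{\mathrm{D}v_g}$, assuming $M$ connected or working componentwise. Alternatively, we could read off $v_g=2Cd_g^{1/2}w$ and the bound on $w$ from the proof of Theorem~\ref{jointperturbation'}.

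\emph{Main obstacle.} The key difficulty is the self-referential dependence of the correction term on the seminorm of the unperturbed potential, since Theorem~\ref{jointperturbation} is phrased in terms of $\Hseminorm{\alpha}{\phi}$ rather than $\Hseminorm{\alpha}{\psi}$. The fixed-point step resolves it, and the smallness of $d_g$ from (IS) is precisely what makes that step a contraction. Everything else is bookkeeping to keep all the radii uniform over the neighbourhood.
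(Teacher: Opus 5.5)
Your proof is correct, but it handles the key step differently from the paper.

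\textbf{Your route.} You deal with the self-referential dependence on $\Hseminorm{\alpha}{\phi}$ (resp.\ $\Hnorm{\infty}{\mathrm{D}\phi}$) by a one-dimensional contraction argument. For each $(g,\psi)$ this produces a potential $\hat\phi\in B(\phi,\ve)$ such that $\psi$ is \emph{exactly} of the form in Theorem~\ref{jointperturbation}~(ii) with $\xi=0$, or in Theorem~\ref{jointperturbation'}~(ii).

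\textbf{The paper's route.} The paper never solves for $\hat\phi$. It uses the extra property in (\ref{e_property_v_g}) that $\mu_{\cO_g}\in\Mmax(g,v_g)$. In the H\"older case this is trivial, since $v_g=-2C\Hseminorm{\alpha}{\phi}d_g^{\alpha/2}d(\cdot,\cO_g)^\alpha$ is nonpositive and vanishes on $\cO_g$. The argument then runs as follows.
\begin{itemize}
\item It subtracts a fixed overcorrection, $\psi'=\psi-(3/2)\Hnorm{\infty}{\mathrm{D}\phi}v_g$, which still lies in the locking ball $B(\phi,\theta)$.
\item It writes $\psi=\psi'+\Hnorm{\infty}{\mathrm{D}\psi'}v_g+c\,v_g$, where $c=(3/2)\Hnorm{\infty}{\mathrm{D}\phi}-\Hnorm{\infty}{\mathrm{D}\psi'}\ge 0$. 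Nonnegativity of $c$ comes from (\ref{seminormcontrol}), which holds because $\theta\le\Hnorm{\infty}{\mathrm{D}\phi}/2$.
\item $\mu_{\cO_g}$ uniquely maximizes the first two summands and maximizes $c\,v_g$, so it uniquely maximizes $\psi$.
\end{itemize}

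\textbf{What each buys.} The paper's argument is explicit: it gives the neighbourhood $V\times B(\phi,\theta/2)$ directly, with no fixed point. However, it depends on this monotonicity property of the correction term. In the $C^1$ case that costs the extra approximation argument with the functions $\phi_n$ at the end of the proof of Theorem~\ref{jointperturbation'}.

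Your argument uses only the main conclusions of the two joint perturbation theorems, so that second half of Theorem~\ref{jointperturbation'} is not needed for this result. The price is the elementary contraction step, which needs $2Cd_g^{\alpha/2}<1$, resp.\ $3Cd_g^{1/2}<1$, and this is guaranteed by (IS).

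Your control of $\Hnorm{\infty}{v_g}$, by normalising $v_g$ to vanish at a point, is the same device the paper uses.
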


\begin{proof}
	Since the proofs of (i) and (ii) share a common  core, here we prove the more complicated (ii) in detail and give a more abbreviated proof of (i).

	(ii) Fix $(f,\phi) \in \Lock\bigl(\cF, C^1(M,\R)\bigr)$ such that $f$ is $\cF$-stably hyperbolic. Since $\phi$ is not a constant function, $\Hnorm{\infty}{\mathrm{D}\phi}>0$.
 Let us write $B(\phi, r)\= \bigl\{\psi \in C^1(M,\R) : \Hnorm{C^1}{\psi-\phi}<r \bigr\}$.
 
	Let $\cO$ be the $f$-periodic orbit satisfying $ \Mmax(f,\phi)=\{\mu_{\cO}\}$. Then there exists $\theta>0$ such that if $\psi \in B(\phi,\theta)$ then
	$\Mmax(f,\psi)=\{\mu_{\cO}\}$.
	We can assume without loss of generality that $\theta$ is small enough (i.e.,~$\theta \leq 2^{-1}\Hnorm{\infty}{\mathrm{D}\phi}$) such that
	\begin{equation}\label{seminormcontrol}
		\frac{1}{2}\Hnorm{\infty}{\mathrm{D}\phi}\leq \Hnorm{\infty}{\mathrm{D}\psi}\leq \frac{3}{2}\Hnorm{\infty}{\mathrm{D}\phi} \quad \text{ for all } \psi \in B(\phi, \theta).
	\end{equation}
	Applying Theorem~\ref{jointperturbation} (to $f$ and $\cO$), let the neighbourhood $U$ of $f$, and the constant $C>0$, be as in that theorem. 
	By condition (IS) in Definition~\ref{Fstablyhyperbolic}, there exists a neighbourhood $V\subseteq U$ of $f$ such that
	\begin{equation}\label{dg_bound_V}
		d_g<(9C(1+\diam(M))\Hnorm{\infty}{\mathrm{D}\phi}/\theta)^{-2} \quad \text{ for all }g\in V.
	\end{equation}
	For every $g\in V$, let $v_g$ be the function obtained from Theorem~\ref{jointperturbation'}~(ii), and assume without loss of generality that $v_g(x)=0$ for some $x\in M$. By Theorem~\ref{jointperturbation'}~(ii) and (\ref{dg_bound_V}), we have
	\begin{equation}\label{normbound_vg}
    \begin{aligned}
       		 \Hnorm{C^1}{v_g}
             = \Hnorm{\infty}{v_g}+\Hnorm{\infty}{\mathrm{D}v_g}
             &\leq (1+\diam (M))\Hnorm{\infty}{\mathrm{D}v_g} \\
             &\leq 3Cd_g^{1/2}(1+\diam (M))
             < \theta / (3\Hnorm{\infty}{\mathrm{D}\phi}). 
    \end{aligned}
	\end{equation} 
	By Theorem~\ref{jointperturbation'}, for all $g\in V$ and $\psi\in B(\phi,\theta)$, we have
	\begin{equation}\label{vgproperty}
		\Mmax(g, \psi+\Hnorm{\infty}{\mathrm{D}\psi}v_g) = \{\mu_{\cO_g}\} \subseteq \Mmax(g, v_g) .
	\end{equation} 
	Now fix $(g,\psi)\in V\times B(\phi,\theta/2)$, and set
	\begin{equation}\label{psi'}
		\psi'\=\psi-(3/2)\Hnorm{\infty}{\mathrm{D}\phi}v_g.
	\end{equation}  
	By (\ref{psi'}) and (\ref{normbound_vg}), we obtain
	$\Hnorm{C^1}{\psi-\psi'}\leq (3/2)\Hnorm{\infty}{\mathrm{D}\phi}\Hnorm{C^1}{v_g}\leq  \theta / 2$.
	Using this, and the fact that $\psi \in B(\phi,\theta/2)$, we deduce that $\psi'\in B(\phi,\theta)$. Thus applying (\ref{seminormcontrol}) to $\psi$ and $\psi'$, we obtain
	\begin{equation}\label{psi'phiseminorm}
		\Hnorm{\infty}{\mathrm{D}\psi}
        \geq  \Hnorm{\infty}{\mathrm{D}\phi} /2 
        \geq  \Hnorm{\infty}{\mathrm{D}\psi'} /3.
	\end{equation}
	From (\ref{vgproperty}) we obtain that $\{\mu_{\cO_g}\} = \Mmax(g, \psi'+\Hnorm{\infty}{\mathrm{D}\psi'}v_g )$.
    By (\ref{psi'}),
	\begin{equation*}
		\psi=\psi'+ (3/2) \Hnorm{\infty}{\mathrm{D}\phi}v_g=\psi'+\Hnorm{\infty}{\mathrm{D}\psi'}v_g+((3/2)\Hnorm{\infty}{\mathrm{D}\phi}-\Hnorm{\infty}{\mathrm{D}\psi'})v_g.
	\end{equation*}
	By (\ref{psi'phiseminorm}), $\mu_{\cO_g}\in \Mmax(g, ((3/2)\Hnorm{\infty}{\mathrm{D}\phi}-\Hnorm{\infty}{\mathrm{D}\psi'})v_g)$. Hence $\Mmax(g,\psi) = \{\mu_{\cO_g}\}.$
	
	Therefore, we have established that the neighbourhood $V \times B(\phi, \theta/2)$ of $(f,\phi)$ is contained in $\Lock(\cF , C^1(M,\R))$, so~(ii) is proved.
	\smallskip
	
	(i) Setting $v_g \= -2 C \Hseminorm{\alpha}{\phi} d_g^{\alpha/2} d(\cdot, \cO_g)^\alpha$, and using Theorem~\ref{jointperturbation}, we obtain properties of $v_g$ analogous to Theorem~\ref{jointperturbation'}~(ii), then~(i) follows by using the same argument as in the proof of~(ii).
\end{proof}

\begin{rem}\label{nonwander}
	\smallskip
	(i) 
    The role of the nonwandering set $\Omega(f)$ in Definition~\ref{Fstablyhyperbolic} can be generalised as follows.
    Let $\varsigma \: \cF \to 2^X$ be a map such that for every $f\in \cF$, $f(\varsigma(f)) \subseteq \varsigma(f)$, and every $f$-invariant measure is supported on $\varsigma(f)$. If $\Omega(f)$ is replaced by $\varsigma(f)$ throughout Definition~\ref{Fstablyhyperbolic}, then Theorem~\ref{openness} remains valid (a fact we shall use in the proof of Theorem~\ref{1dim}).
    This is because the only property of 
$\Omega(f)$ used in the proofs of Theorems~\ref{jointperturbation},~\ref{jointperturbation'}, and~\ref{openness} was that every $f$-invariant measure on $M$ is an invariant measure for the subsystem $(\Omega(f),f)$.
	
	\smallskip
	(ii) Using \cite[Proposition~2.2]{Je19}, we can see that if $f\in\cF$ satisfies conditions (IS) and (ML) in Definition~\ref{Fstablyhyperbolic} 
(or their modifications from~(i) above), and $(\phi, f)\in\Lock\bigl(\cF,
\Holder{\alpha}(X,\R)\bigr)$ for some $\alpha\in(0,1]$, 
then $\lim_{g\to f}\mpe(g,\phi)=\mpe(f,\phi)$. 
     Indeed from (\ref{psig<tau}) we  see that $\limsup_{g\to f} \mpe(g,\phi) \le \mpe(f,\phi)$, while from Theorem~\ref{jointperturbation}~(ii) we see that
    \begin{equation*}
        \liminf\limits_{g\to f} \mpe(g,\phi) \ge \liminf\limits_{g\to f} \mpe\bigl(g, \phi - 2C d_g^{\alpha/2} d(\cdot, \cO_g)^\alpha \bigr) = \liminf\limits_{g\to f} \int \! \phi \,\mathrm{d}\mu_{\cO_g},
    \end{equation*}
    and $\lim_{g\to f} d(h_g, \id|_{\Omega(f)}) = 0$, so $\lim_{g\to f} \int \! \phi \,\mathrm{d}\mu_{\cO_g} = \int \! \phi \,\mathrm{d}\mu_{\cO}$, and thus $\liminf_{g\to f} \mpe(g,\phi) \ge \mpe(f,\phi)$.

    \smallskip
    (iii) If $\phi$ is a constant potential, 
     then $(\phi,f)\in \Lock\bigl(\cF,
\Holder{\alpha}(X,\R)\bigr)$ if and only if the system is uniquely ergodic with unique invariant probability measure a periodic measure.
The assumption in Theorem~\ref{openness} that the potential $\phi$ is nonconstant
is included only to exclude the degenerate situation, which may occur in the abstract setting,
of the system being uniquely ergodic with unique invariant probability measure a periodic measure
(in which case the notion of ``locking'' becomes somewhat vacuous).
 In particular, for our applications in this paper, 
the nonconstancy of $\phi$ is an immediate consequence of
membership of $(\phi,f)$ in $ \Lock\bigl(\cF,
\Holder{\alpha}(X,\R)\bigr)$, rather than 
a genuine restriction.
Moreover, when $X$ is a manifold in our applications, and $\cP=\Holder{\alpha}(X)$ or $C^1(X)$,
the subspace of constant functions is a proper closed linear subspace of $\cP$,
and in particular nowhere dense in $\cP$.
\end{rem}

\section{Axiom A diffeomorphisms: proof of Theorem~\ref{JTPO AxiomA}}\label{sec_Axiom_A}

If $(X,d)$ is a compact metric space, and $f \: X \to X$ a continuous map, recall that a point $x\in X$ is said to be \emph{wandering} if there is a neighbourhood $U$ of $x$ such that $f^n(U) \cap U = \emptyset$ for each $n\in \N$, and the \emph{nonwandering set} $\Omega(f)$ of $f$ is defined as $$\Omega(f) \= X \smallsetminus \{ x\in X : x \text{ is wandering} \}.$$ It is well known that 
the nonwandering set is nonempty and compact, and every $f$-invariant probability measure has its support contained in $\Omega(f)$.
Moreover, if $f$ is a homeomorphism then $\Omega(f)$ is $f$-invariant, in the sense that $f(\Omega(f)) = \Omega(f)$ (cf.\ \cite[Theorem~1.4.9~(e)]{URM22}). 
A compact $f$-invariant subset $\Lambda\subseteq X$ is called \emph{(topologically) transitive} if 
 $\cO^f(x)$ is dense in $\Lambda$
for some $x\in \Lambda$.

Now let $M$ be
a compact smooth manifold (without boundary), with Riemannian metric $\Hseminorm{}{\,\cdot\,}$ on $M$, and  the induced distance function $d$. 

\begin{definition}[Axiom~A diffeomorphisms]\label{d.Axiom.A.diff}
	Let $f \: M \to M$ be a diffeomorphism, and $\mathrm{D}f$ its derivative. An $f$-invariant set $\Lambda$ is called \emph{hyperbolic} if for each $x\in \Lambda$, the tangent space $T_x M$ can be split into a direct sum $T_x M = E^s(x) \oplus E^u(x)$, where the subspaces $E^s(x)$ and $E^u(x)$ are $\mathrm{D}f$-invariant, i.e., $\mathrm{D}f(x) E^s(x) = E^s(f(x))$ and $\mathrm{D}f(x) E^u(x) = E^u(f(x))$, and there exist constants $C\ge1$ and $0<\xi<1$ such that 
	\begin{align*}
		\abs{\mathrm{D}f^n(x)(u)} &\le  C \xi^n \abs{u} \quad \text{ for all } x\in \Lambda, \, u\in E^s(x), n\in \N_0,\\
		\abs{\mathrm{D}f^{-n}(x)(u)} &\le  C \xi^n \abs{u} \quad \text{ for all } x\in \Lambda, \, u\in E^u(x), n\in \N_0.
	\end{align*}
	If the nonwandering set $\Omega(f)$ is hyperbolic, and the set of $f$-periodic points is dense in $\Omega(f)$, then $f$ is called an \emph{Axiom~A diffeomorphism}. 
\end{definition}

Let $f\: M \to M$ be an Axiom~A diffeomorphism. The spectral decomposition theorem (see \cite{Sm67} and cf.~e.g.~\cite[Theorem~5.2]{Wen16}) states that $\Omega(f)$ is uniquely decomposed into finitely many disjoint compact transitive subsets, i.e., $\Omega(f) = \bigcup_{i=1}^k \Omega_i$, where $\Omega_i$ is compact and transitive for each $1\le i \le k$, and $\Omega_i \cap \Omega_j = \emptyset$ if $i\neq j$. For each $1\le i\le k$, the \emph{stable manifold} $W^s(\Omega_i)$ and \emph{unstable manifold} $W^u(\Omega_i)$ are defined as
\begin{align*}
	W^s(\Omega_i) &\= \bigl\{ x\in M : \lim_{n\to +\infty} d( f^n(x) , \Omega_i)=0 \bigr\}, \\ 
	W^u(\Omega_i) &\= \bigl\{ x\in M : \lim_{n\to +\infty} d( f^{-n}(x) , \Omega_i)=0 \bigr\}.
\end{align*}
For $1\le i, \, j \le k$, we write $\Omega_i \rightharpoonup \Omega_j$ if 
\begin{equation*}
	W^u(\Omega_i) \cap W^s(\Omega_j) \nsubseteq \Omega(f).
\end{equation*}
The diffeomorphism $f$ is said to have a \emph{cycle} if there exist $1\le i_1, \, \dots , \, i_m \le k$ such that
\begin{equation*}
	\Omega_{i_1} \rightharpoonup \Omega_{i_2} \rightharpoonup \dots \rightharpoonup \Omega_{i_m} \rightharpoonup \Omega_{i_1}.
\end{equation*}
We say that $f$ has the \emph{no-cycle property}
if it does not have any cycles.

Fix $r\in \N$ and $\alpha\in (0,1]$. Suppose that $M$ is a compact smooth manifold equipped with a Riemannian metric,
and recall that
$\cA^r(M)$ denotes the space
of those $C^r$ Axiom~A diffeomorphisms on $M$ with the no-cycle property.
We first prove that each $f\in \cA^r(M)$ is $\cA^r(M)$-stably hyperbolic, by means of the following three 
results (Lemmas~\ref{A(a)},~\ref{A(b)}, and~\ref{Mane lemma}).

\smallskip

Firstly, it is well known that an Axiom~A diffeomorphism
with the no-cycle condition 
satisfies the following enhanced version of condition (IS) in Definition~\ref{Fstablyhyperbolic}:

\begin{lemma}\label{A(a)}
		 Suppose $M$ is a compact smooth manifold equipped with a Riemannian metric, $r\in \N$, and $f\in \cA^r(M)$. Then $f$ satisfies condition (IS) in Definition~\ref{Fstablyhyperbolic} for $\cF \= \cA^r(M)$. Moreover, if $g\in \cA^r(M)$ is sufficiently close to $f$ then $h_g$ can be taken to be a homeomorphism.
\end{lemma}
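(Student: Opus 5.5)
The plan is to derive Lemma~\ref{A(a)} from the $\Omega$-stability theorem for Axiom~A diffeomorphisms with the no-cycle property (Smale~\cite{Sm70}, Pugh--Shub, cf.~\cite{Pa70}). In its standard form, this theorem says: if $f\in\cA^r(M)$ and $\ve>0$, there is a $C^1$-neighbourhood $\cW$ of $f$ such that every $g\in\cW$ admits a homeomorphism $h_g\colon\Omega(f)\to\Omega(g)$ with $h_g\circ f=g\circ h_g$ and $d_\infty(h_g,\id)<\ve$.

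\textbf{Step 1: neighbourhoods and the Lipschitz bound.} The $C^r$ topology with $r\ge1$ is finer than the $C^1$ topology. Hence $\cW\cap\cA^r(M)$ contains a $C^r$-neighbourhood $U$ of $f$ in $\cA^r(M)$. Shrinking $U$ further in the $C^1$ topology, we can assume $\Hnorm{\infty}{\mathrm{D}g-\mathrm{D}f}<\LIP(f)$. Then $\sup\Hseminorm{}{\mathrm{D}g}<2\LIP(f)$. Since $M$ is a compact Riemannian manifold, $\LIP(g)$ is bounded by $\sup_x\Hseminorm{}{\mathrm{D}g(x)}$ (integrate along geodesics). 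This gives $\LIP(g)<2\LIP(f)$.

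\textbf{Step 2: the maps $h_g$ and $i_g$.} The map $h_g$ comes directly from $\Omega$-stability, and it is a homeomorphism onto $\Omega(g)$; this already gives the ``moreover'' clause. We then set $i_g\=h_g^{-1}\colon\Omega(g)\to\Omega(f)$. Applying $h_g^{-1}$ on both sides of $h_g\circ f=g\circ h_g$ gives $f\circ h_g^{-1}=h_g^{-1}\circ g$ on $\Omega(g)$. For the distance bound, any $y\in\Omega(g)$ can be written as $y=h_g(x)$, so
\[
d\bigl(i_g(y),y\bigr)=d\bigl(x,h_g(x)\bigr)\le d_\infty(h_g,\id)<\ve .
\]
Thus all requirements of condition (IS) hold.

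\textbf{Main obstacle.} The only real subtlety is making sure the cited $\Omega$-stability statement comes with the $C^0$-closeness $d_\infty(h_g,\id)<\ve$, uniformly over a neighbourhood, and not merely the existence of some conjugacy. The standard proofs give this closeness. They build $h_g$ from the shadowing lemma on a filtration neighbourhood provided by the no-cycle property, and the shadowing orbit stays within $\ve$ of the original one once $g$ is $C^1$-close to $f$. I would cite a version that states this explicitly, e.g.~Shub's \emph{Global Stability of Dynamical Systems} or \cite{Pa70}, rather than reprove it. I would also note that $\cA^r(M)$ is $C^1$-open in $\operatorname{Diff}^r(M)$, so nearby $g$ remain in $\cA^r(M)$, although the lemma only uses $g\in\cA^r(M)$ as given.
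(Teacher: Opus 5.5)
Your proposal is correct and takes the same route as the paper. The paper's proof simply cites $C^r$ $\varepsilon$-$\Omega$-stability for Axiom~A diffeomorphisms with no cycles (\cite[Theorem~5.8]{Wen16}) to get a conjugacy $h_g\colon\Omega(f)\to\Omega(g)$ that is $\varepsilon$-close to the identity; your explicit treatment of the bound $\LIP(g)<2\LIP(f)$ via $C^1$-closeness, and of the choice $i_g\= h_g^{-1}$, fills in details that the paper leaves implicit.
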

\begin{proof}
	By \cite[Theorem~5.8]{Wen16}, for every $r\in \N$,
	an Axiom~A diffeomorphism $f\in \cA^r(M)$ satisfying the no-cycle condition has $C^r$ \emph{$\ve$-$\Omega$-stability}, i.e., for all $\ve>0$, there exists a $C^r$ neighbourhood $U\subseteq \operatorname{Diff}^r(M)$ such that for every $g\in U$, 
	the restriction
	$g|_{\Omega(g)}$ is topologically conjugate to $f|_{\Omega(f)}$, and the conjugacy is $\ve$-close to the identity.  
\end{proof}

Regarding condition (RHE) in Definition~\ref{Fstablyhyperbolic}, we have:

\begin{lemma}\label{A(b)}
	 Let $M$ be a compact smooth manifold equipped with a Riemannian metric $\Hseminorm{}{\,\cdot\,}$, and the induced distance function $d$. If $r\in \N$ and $f\in \cA^r(M)$, then $f$ satisfies condition (RHE) in  Definition~\ref{Fstablyhyperbolic} for $\cF\=\cA^r(M)$.
\end{lemma}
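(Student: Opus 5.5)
The plan is to prove (RHE) with constants that are uniform over a $C^1$ neighbourhood of $f$. The tool will be a cone-field argument, combined with continuity of the hyperbolic splitting under perturbation.

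\textbf{Step 1: uniform hyperbolicity near $f$.} By Lemma~\ref{A(a)}, for $g$ $C^1$-close to $f$ the set $\Omega(g)$ lies in a small neighbourhood $W$ of $\Omega(f)$. The hyperbolic splitting of $\Omega(f)$ extends continuously, though not invariantly, to a neighbourhood of $\Omega(f)$; I will use this to define stable and unstable cone fields $\mathcal{C}^s$ and $\mathcal{C}^u$ on $W$. I will also fix an adapted metric (Mather's trick, as in \cite[Proposition~6.4.16]{KH95}), chosen so that $C=1$ in Definition~\ref{d.Axiom.A.diff} for $f$. By openness of the cone conditions, there is $\lambda>1$ with the following property for every $g$ in a small $C^1$ neighbourhood $U$: $\mathrm{D}g$ maps $\mathcal{C}^u$ into itself and expands vectors in it by at least $\lambda$, and $\mathrm{D}g^{-1}$ does the same for $\mathcal{C}^s$. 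In particular, each $\Omega(g)$ is hyperbolic, with constants uniform in $g\in U$. Since the adapted metric is uniformly equivalent to the original one, switching metrics only changes the final constant $K$.

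\textbf{Step 2: local product structure with uniform size.} Using stable manifold theory with parameters, for $g\in U$ and $x\in\Omega(g)$ the local stable and unstable manifolds $W^s_\ve(x,g)$ and $W^u_\ve(x,g)$ exist with a size $\ve$ independent of $g$. They are graphs over $E^s$ and $E^u$ with uniformly bounded slope, and they contract under $g$ (respectively $g^{-1}$) by the factor $\lambda^{-1}$. This gives a uniform $\delta>0$: if $d(x,y)<\delta$, then $W^s_\ve(x,g)\cap W^u_\ve(y,g)$ is a single point $[x,y]$, and $d(x,[x,y])+d(y,[x,y])\le K_0\,d(x,y)$. No claim is made that $[x,y]\in\Omega(g)$; this is not needed, because only ambient estimates are used.

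\textbf{Step 3: the two-sided estimate.} Let $x,y\in\Omega(g)$ satisfy $d(g^m x,g^m y)<\delta$ for $0\le m\le n$, with $\delta$ shrunk so that all brackets stay within the local charts. Set $z=[x,y]$. By an expansivity and shadowing argument in exponential charts, the points $g^m z$ stay within $\ve$ of both $g^m x$ and $g^m y$ for $0\le m\le n$. Stable contraction gives
\[
d(g^m x,g^m z)\le K_1\lambda^{-m}d(x,z).
\]
The pair $(g^m z,g^m y)$ lies on a local unstable leaf for $0\le m\le n$, so backward contraction from time $n$ gives
\[
d(g^m z,g^m y)\le K_1\lambda^{-(n-m)}d(g^n z,g^n y).
\]
Finally I bound $d(g^n z,g^n y)\le d(g^n z,g^n x)+d(g^n x,g^n y)\le K_1\lambda^{-n}d(x,z)+d(g^n x,g^n y)$ and $d(x,z)\le K_0 d(x,y)$. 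Adding the two estimates then yields (\ref{assumptionb}) with $K$ depending only on $K_0$, $K_1$ and the metric-equivalence constants.

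\textbf{Main obstacle.} The hardest part is Step 3's claim that the bracket orbit $g^m z$ stays inside local stable and unstable leaves for the full time $n$, uniformly in $g$. Near points of $\Omega(g)$ there is no invariant splitting off $\Omega(g)$, so I will run the argument with the cone fields on the neighbourhood $W$. Concretely, I will write $g^m y$ in the exponential chart at $g^m x$ and split the displacement into cone components. The unstable component grows by at least $\lambda$ each step and the stable component shrinks, up to errors of order $\delta$ times the $C^1$ modulus of continuity, which is uniform over $U$. Once $\delta$ is small enough, this gives the $\lambda^{-\min\{m,n-m\}}$ bound directly and bypasses brackets altogether. I would try this direct cone computation first.
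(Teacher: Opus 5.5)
Your argument is correct, but your primary route (brackets and local product structure) differs from the paper's. The paper never uses invariant manifolds or brackets.

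\textbf{The paper's route.} It fixes a metric $|\cdot|_a$ adapted to $f$. By \cite[Lemma~4.8]{Wen16}, the same metric is adapted to every nearby $g$, with $\tau_g$ uniformly below $1$ and the box norm $\|\cdot\|_g$ on $\Omega(g)$ uniformly comparable to $|\cdot|_a$. By \cite[Lemma~4.11]{Wen16}, the Lipschitz constant of the remainder $\phi_g=\exp^{-1}\circ g\circ\exp-\mathrm{D}g$ is uniformly small on a fixed ball. The paper then writes $g^i(y)$ in the chart at $g^i(x)\in\Omega(g)$ and splits along $E^s_g\oplus E^u_g$ at the base point. It runs a two-case dichotomy. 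While the stable component dominates, the box norm contracts by $\lambda^{-1}$. From the first time the unstable component dominates, it keeps dominating and expands by $\lambda$, so backward iteration from time $n$ gives the factor $\lambda^{i-n}$.

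\textbf{Your fallback is the paper's route.} Your direct cone computation is exactly this argument, with cones in place of the invariant splitting. Two things make it work: that dominance dichotomy, which you should state explicitly rather than say the bound comes ``directly''; and bounding the nonlinear error multiplicatively, as a small Lipschitz constant times $|w|$, not additively by something of order $\delta$.

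\textbf{What each route buys.} Your bracket route gets the two-sided bound cleanly from $d(g^mx,g^my)\le d(g^mx,g^mz)+d(g^mz,g^my)$. However, it outsources all the uniformity in $g$ to stable manifold theory with parameters and a uniform local product structure. That uniformity is the actual content of the lemma beyond \cite[Proposition~6.4.16]{KH95}. The paper makes it explicit with elementary linear algebra. Its route also needs only $x\in\Omega(g)$ and works for arbitrary $y\in M$.

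\textbf{The obstacle is easier than you think.} The ``main obstacle'' you identify needs no shadowing argument. Since $z\in W^s_\ve(x,g)$, the triangle inequality gives $d(g^jz,g^jy)\le K_1K_0\delta+\delta<\ve$ for $0\le j\le n$. The standard characterization of $W^u_\ve(p,g)$, as the points whose backward orbits stay $\ve$-close to that of $p$, then gives $g^mz\in W^u_\ve(g^my,g)$ for $0\le m\le n$.
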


\begin{proof}
	See Appendix~\ref{A}.
\end{proof}

\begin{rem}
    The single-map version of Lemma~\ref{A(b)}, in which the
    constants $K$, $\delta$, $\lambda$ depend on $f$ alone, is
    the content of \cite[Proposition~6.4.16]{KH95}. 
    The essential new content of Lemma~\ref{A(b)} is that the
    constants  $K$, $\delta$, $\lambda$
of condition (RHE) in Definition~\ref{Fstablyhyperbolic}
can be chosen \emph{uniformly} over a neighbourhood
    of $f$ in $\cA^r(M)$.
\end{rem}

Regarding condition (ML) in Definition~\ref{Fstablyhyperbolic}, we have:

\begin{lemma}\label{Mane lemma}
	Suppose $M$ is a compact smooth manifold equipped with a Riemannian metric. Every map in $\cA^1(M)$ satisfies condition (ML) in Definition~\ref{Fstablyhyperbolic}.
\end{lemma}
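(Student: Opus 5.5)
Fix $f\in\cA^1(M)$, write $\Omega\=\Omega(f)=\bigcup_{i=1}^k\Omega_i$ for the spectral decomposition, and fix $\alpha\in(0,1]$. The plan is to build the sub-action $u$ piece by piece and then glue, following the approach of \cite{STY24} (cf.~\cite{Bou11}). Two observations drive the reduction. First, the $\Omega_i$ are pairwise disjoint compact sets, so $\gamma\=\min_{i\ne j}d(\Omega_i,\Omega_j)>0$. Second, each $\Omega_i$ is $f$-invariant. Hence a function on $\Omega$ is $\alpha$-H\"older as soon as each restriction $u|_{\Omega_i}$ is, provided we also control the jumps between pieces. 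For $x\in\Omega_i$, $y\in\Omega_j$ with $i\ne j$ we have
\[
\abs{u(x)-u(y)}\le 2\Hnorm{\infty}{u}\le 2\Hnorm{\infty}{u}\,\gamma^{-\alpha}d(x,y)^\alpha .
\]
It therefore suffices, for each $i$, to produce $u_i$ on $\Omega_i$ satisfying the following, with $L_i$ depending only on $f$ and $\alpha$:
\[
\phi+u_i-u_i\circ f\le \mpe(f|_{\Omega_i},\phi)\ \text{ on }\Omega_i,\qquad \Hseminorm{\alpha}{u_i}\le L_i\Hseminorm{\alpha}{\phi},\qquad \operatorname{osc}(u_i)\le L_i'\Hseminorm{\alpha}{\phi}.
\]
Since $\mpe(f|_{\Omega_i},\phi)\le\mpe(f,\phi)$, setting $u\=u_i$ on $\Omega_i$ then gives (ML). Before gluing, we normalise each $u_i$ by subtracting a constant so that it vanishes at a chosen point of $\Omega_i$; this turns the oscillation bound into a sup bound, and so the cross-piece H\"older constant is bounded by $2L'\gamma^{-\alpha}\Hseminorm{\alpha}{\phi}$.

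For a single basic set $\Lambda=\Omega_i$, which is a locally maximal, transitive hyperbolic set, the plan is as follows. First, use the spectral decomposition into mixing pieces $f^{N}$ to reduce to the mixing case. Next, fix a Markov partition of $\Lambda$ with small diameter, which gives a two-sided subshift of finite type coding $\pi\:\Sigma_A\to\Lambda$ that is H\"older and semiconjugating. On the symbolic side, the Ma\~n\'e lemma for two-sided shifts is obtained in two steps. We first use the standard Sinai/Bowen cohomology $\phi\circ\pi=\phi^++w-w\circ\sigma$, where $\phi^+$ depends only on future coordinates and both $\Hseminorm{}{\phi^+}$ and $\Hseminorm{}{w}$ are controlled by $\Hseminorm{\alpha}{\phi}$ (with the H\"older exponent halved). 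We then apply the one-sided Ma\~n\'e lemma for expanding maps to $\phi^+$.

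The main obstacle is that this symbolic route loses regularity, since the exponent degrades in the Sinai step, and a function on $\Sigma_A$ need not descend through $\pi$. That is exactly why I would instead follow \cite{STY24}, which works directly on $\Lambda$. The construction there uses the Bousch-type formula
\[
u(x)\=\sup\Bigl\{\,\liminf\text{ of }S_n^f\overline{\phi}(y)+\textstyle\sum(\text{corrections})\,\Bigr\}
\]
taken over points $y$ in the local unstable manifold through $x$, or equivalently a limit of the operators $\sup_{f(y)=\cdot}$ along the backward branches given by the local product structure. The estimate $\Hseminorm{\alpha}{u}\le L\Hseminorm{\alpha}{\phi}$ is obtained from the exponential contraction estimates, with a shadowing argument in the style of Remark~\ref{rm_assumption_(b)_sum}(i) used to compare orbit segments. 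The key check is that the resulting constant depends on $\phi$ only through the factor $\Hseminorm{\alpha}{\phi}$, and this follows from the linearity of every estimate in $\Hseminorm{\alpha}{\phi}$. The oscillation bound comes for free, because the oscillation is at most $\Hseminorm{\alpha}{u}(\diam M)^\alpha$.

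Only $C^1$ regularity is used: hyperbolicity, local product structure and H\"older stable and unstable holonomies on the compact set $\Lambda$. Hence the lemma holds on all of $\cA^1(M)$, and the no-cycle hypothesis plays no role here.
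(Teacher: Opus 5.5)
Your reduction to basic sets and the cross-piece gluing are correct; this is the same patching the paper uses for rational maps in Lemma~\ref{A(c)hr}. They are unnecessary here, though. The paper treats $\Omega(f)$ in one go, since it is locally maximal by \cite[Theorem~5.3]{Wen16} and \cite{STY24} applies to any locally maximal hyperbolic set.

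The genuine gap is the single-piece step, which is the entire content of the lemma. You rightly discard the Markov/Sinai route because it loses H\"older exponent, but what you put in its place does not avoid that loss:
\begin{enumerate}
\item An operator $\sup_{f(y)=\cdot}$ degenerates to composition with $f^{-1}$ for a diffeomorphism, since preimages are unique. It offers none of the optimisation over inverse branches that drives Bousch's formula for expanding maps.
\item A supremum over local unstable manifolds with unspecified ``corrections'' is the geometric form of the Sinai reduction you just rejected.
\item The stable and unstable holonomies you invoke are in general only $\beta$-H\"older for some $\beta<1$. Anything built from $\phi$ through them is therefore only $\alpha\beta$-H\"older.
\end{enumerate}
Nothing in your sketch produces $u\in\Holder{\alpha}(\Omega(f),\R)$ with the \emph{same} $\alpha$ and $\Hseminorm{\alpha}{u}\le L\Hseminorm{\alpha}{\phi}$. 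Appealing to linearity of the estimates in $\Hseminorm{\alpha}{\phi}$ does not replace a construction. You also never address that \cite{STY24} is a Lipschitz ($\alpha=1$) theorem, so something has to change for $\alpha\in(0,1)$.

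The mechanism of \cite{STY24}, which the paper adapts, is not the one you describe. It works with \emph{pseudo-orbits} in a compact neighbourhood $K$ of $\Omega(f)$ with $\Omega(f)=\bigcap_{n\in\Z}f^n(K)$, not with true backward branches. For $\alpha<1$ there are two ingredients.

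First, an $\ell^\alpha$ shadowing estimate
$\sum_{i=1}^n d(x_i,f^i(p))^\alpha\le C_0\sum_{k=1}^n d(f(x_{k-1}),x_k)^\alpha$.
This follows from the exponentially localised shadowing of \cite[Section~2]{STY24} together with subadditivity of $t\mapsto t^\alpha$. Remark~\ref{rm_assumption_(b)_sum}~(i) concerns genuine orbits that stay $\delta$-close, and does not give this pseudo-orbit bound.

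Second, the discrete Lax--Oleinik operator with H\"older cost,
$T[u](x)=\inf_{x'\in K}\{u(x')+\phi(x')-\overline{\phi}_\Lambda+C\,d(f(x'),x)^\alpha\}$,
where $C$ is a fixed multiple of $\Hseminorm{\alpha}{\phi}$. Since $x\mapsto d(f(x'),x)^\alpha$ is $\alpha$-H\"older with constant $1$, every $T[u]$ is automatically $\alpha$-H\"older with constant $C$. This is exactly how the exponent is preserved and how the linear bound $\Hseminorm{\alpha}{u}\le L\Hseminorm{\alpha}{\phi}$ arises.

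The shadowing estimate then controls the sums along the pseudo-orbits appearing in the iterates of $T$, and the sub-action is obtained as in \cite[Sections~3 and~4]{STY24}.
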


\begin{proof}
	Fix $f\in \cA^1(M)$ and $\alpha\in (0,1]$. By \cite[Theorem~5.3]{Wen16}, the nonwandering set $\Omega(f)$ is \emph{locally maximal}, in the sense that there exists a compact neighbourhood $K$ of $\Omega(f)$ such that $\Omega(f) = \bigcap_{n\in \Z} f^n(K)$. In the Lipschitz case $\alpha=1$, the lemma follows immediately from \cite[Theorem~1.3]{STY24}, while if $\alpha\in(0,1)$ we can proceed by adapting
    arguments in \cite{STY24}. More specifically, using the same arguments as in \cite[Section~2]{STY24} (specifically, Step~5 in the proof there), we obtain a modified form of \cite[Corollary~1.6]{STY24} with \cite[(1.5)]{STY24} replaced by the inequality
	\begin{equation*}
		\sum_{i=1}^n d\bigl(x_i, f^i(p)\bigr)^\alpha \le C \sum_{k=1}^{n} d(f(x_{k-1}), x_k)^\alpha,
	\end{equation*}
	where $C$ is a constant depending only on $f$ and $M$. By then using a modified discrete Lax--Oleinik operator (cf.~\cite[Definition~3.1]{STY24}), with the expression (3.1) of \cite{STY24} replaced by 
	\begin{equation*}
		T[u](x) \= \inf_{x'\in K} \{ u(x') + \phi(x') - \overline{\phi}_\Lambda + C d(f(x'),x)^\alpha  \} \quad \text{ for } x\in K, 
	\end{equation*}
	the result then follows 
    using arguments analogous to those in \cite[Sections~3 and~4]{STY24}.
\end{proof}

As a corollary of \cite[Theorems~1.1 and~1.2]{HLMXZ25}, if $f\: M \to M$ is an Axiom~A diffeomorphism, then $(f,\cP)$ satisfies the TPO property. More precisely, we have:
\begin{prop}[Individual TPO for Axiom~A diffeomorphisms]\label{TPO AxiomA}
	Suppose $M$ is a compact smooth manifold equipped with a Riemannian metric, $\alpha\in (0,1]$, and $\cP$ is either $\Holder{\alpha}(M,\R)$ or $C^1(M,\R)$. For every Axiom~A diffeomorphism $f$ on $M$, the locking set $\Lock(f,\cP)$ is an open dense subset of $\cP$.
\end{prop}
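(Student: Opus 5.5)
The plan is to deduce Proposition~\ref{TPO AxiomA} directly from \cite[Theorems~1.1 and~1.2]{HLMXZ25}. Those theorems give, for an Axiom~A diffeomorphism $f$ and for $\cP$ equal to $\Holder{\alpha}(M,\R)$ or $C^1(M,\R)$, an open dense set of potentials with a unique maximizing measure supported on a periodic orbit. So density of $\Lock(f,\cP)$ should come essentially for free, and the work lies in proving openness.

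\emph{Density.} First I would recall the precise form of the result of \cite{HLMXZ25}. If it already asserts that the set of $\phi$ whose maximizing measure is \emph{locked} is open and dense, the proposition follows at once, so I would check this first. If it asserts only that the set of $\phi$ with the PO property contains an open dense set, I would argue as follows. Take $\phi$ in that open dense set $V(f)$, with $\Mmax(f,\phi)=\{\mu_\cO\}$. For each $\psi$ in a ball around $\phi$ contained in $V(f)$, the measure $\Mmax(f,\psi)$ is a single periodic measure $\mu_{\cO(\psi)}$. I would then show that $\cO(\psi)=\cO$ for $\psi$ close to $\phi$. Upper semicontinuity of $\psi\mapsto\Mmax(f,\psi)$ in the weak$^*$ topology gives $\mu_{\cO(\psi)}\to\mu_\cO$. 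The remaining issue is that distinct periodic measures can accumulate on $\mu_\cO$. To exclude this, I would replace $\phi$ by a nearby perturbation $\phi-\ve\, d(\cdot,\cO)$ (or a $C^1$ smoothing of it, via \cite[Theorem~2.7]{HLMXZ25} as in the proof of Theorem~\ref{jointperturbation'}), which makes $\cO$ strictly maximizing. With a Ma\~n\'e sub-action (condition (ML), available by Lemma~\ref{Mane lemma}), the normalized potential $\psi=\overline\phi+u-u\circ f$ is then $\le -c\,d(\cdot,\cO)$ on $\Omega(f)$. A small perturbation $\xi$ with $\|\xi\|$ much smaller than $\ve$ changes this by at most $\|\xi\|_\alpha\, d(\cdot,\cO)^\alpha$ plus a constant. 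This is exactly the Case~A/B/C argument of Theorem~\ref{jointperturbation} with $g=f$, $h_g=i_g=\id$, so $\cO$ stays the unique maximizing orbit. Since every neighbourhood of any $\phi\in\cP$ contains such perturbations, density follows.

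\emph{Openness.} I would take $\phi\in\Lock(f,\cP)$. By definition there is a ball $B(\phi,\theta)$ on which $\Mmax(f,\cdot)=\{\mu_\cO\}$. For any $\psi\in B(\phi,\theta/2)$, the ball $B(\psi,\theta/2)\subseteq B(\phi,\theta)$ shows that $\psi$ is also locked, with the same orbit. Hence $\Lock(f,\cP)$ is open, which is immediate from the definition.

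\emph{Main obstacle.} The main obstacle is the density step in the case where \cite{HLMXZ25} provides only PO rather than locking. Making $\cO$ strictly maximizing and applying the sub-action argument requires a uniform bound $\Hseminorm{\alpha}{u}\le L\Hseminorm{\alpha}{\phi}$ on all of $\Omega(f)$, which is where Lemma~\ref{Mane lemma} is essential. It also requires closeness estimates (condition (RHE), Lemma~\ref{A(b)}) for orbits shadowing $\cO$. Fortunately the $g=f$ specialisation of Theorem~\ref{jointperturbation} packages exactly these, so I would invoke it (and Theorem~\ref{jointperturbation'} in the $C^1$ case) rather than redo the estimate.
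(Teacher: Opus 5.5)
Your openness argument and your choice to get density from \cite{HLMXZ25} agree with the paper. The gap is that density is not ``for free'' as you present it: the missing step is exactly what the paper's proof consists of.

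\cite[Theorem~1.1]{HLMXZ25} is not a statement about $\Holder{\alpha}(M,\R)$ for Axiom~A diffeomorphisms. It is an abstract result for a system satisfying (ACP), (EI) and (NLP), and the paper applies it to the subsystem $(\Omega(f),f|_{\Omega(f)})$. So you must do two things:
\begin{enumerate}
\item[(a)] Check these three properties for that subsystem. The paper uses \cite[Propositions~6.4.15 and~6.4.16]{KH95} together with Lemma~\ref{Mane lemma}.
\item[(b)] In the H\"older case, transfer the conclusion back to $M$. The theorem gives $\psi\in\Holder{\alpha}(\Omega(f))$ with $\Hnorm{\alpha,\Omega(f)}{\psi}<\ve$, a periodic orbit $\cO$ and $\delta>0$ such that $\Mmax(\Omega(f),f|_{\Omega(f)},\phi|_{\Omega(f)}+\psi+\xi)=\{\mu_{\cO}\}$ whenever $\Hnorm{\alpha,\Omega(f)}{\xi}<\delta$. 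Extend $\psi$ to $\tpsi$ on $M$ with $\Hnorm{\alpha,M}{\tpsi}\le\ve$ via \cite[Theorem~1.33]{Wea18}. Then use that restriction to $\Omega(f)$ does not increase norms, and that $\Mmax(M,f,\cdot)=\Mmax(\Omega(f),f|_{\Omega(f)},\cdot|_{\Omega(f)})$ because invariant measures live on $\Omega(f)$. This gives $\phi+\tpsi\in\Lock(f,\Holder{\alpha}(M,\R))$.
\end{enumerate}
For $\cP=C^1(M,\R)$, \cite[Theorem~1.2]{HLMXZ25} gives density of $\Lock(f,C^1(M,\R))$ directly once (a) is checked. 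In particular, the cited results already give locking, not merely PO, so your first branch is the one that applies.

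Your fallback branch would not work as stated. Specialising Theorem~\ref{jointperturbation} (or Theorem~\ref{jointperturbation'}) to $g=f$, $h_f=i_f=\id$ gives $d_f=0$. Then the penalty $2C\Hseminorm{\alpha}{\phi}d_f^{\alpha/2}d(\cdot,\cO)^\alpha$ vanishes, and the admissible $\xi$ must satisfy $\Hseminorm{\alpha}{\xi}\le 0$ and $\Hnorm{\infty}{\xi}\le 0$. Likewise $v_f=0$ in the $C^1$ version. So no locking neighbourhood results; inside the proof one even has $\tau=\eta=0$, so $\rho$ is undefined.

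To rescue it you would have to rerun Cases~A--C yourself, with a \emph{fixed} penalty $\ve\,d(\cdot,\cO)^\alpha$ and with $\tau-\eta$ controlled by $\Hnorm{\infty}{\xi}$ instead of $d_g^\alpha$. In the $C^1$ case there is a further point. The smoothing error $\ve\,(w+d(\cdot,\cO))$ has Lipschitz seminorm of order $\ve$, not ``much smaller than $\ve$''. You therefore need the form of the estimate that tolerates a seminorm comparable to the penalty provided the sup norm is tiny, as exploited in the proof of Theorem~\ref{jointperturbation'}.
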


\begin{proof}
	Let $f \: M \to M$ be an Axiom~A diffeomorphism. Clearly,  $\Lock(f,\cP)$ is an open subset of $\cP$,  so it suffices to prove that $\Lock(f,\cP)$ is dense in $\cP$.
    
    We consider the dynamical system $(\Omega(f), f|_{\Omega(f)} )$. By \cite[Propositions~6.4.15 \&~6.4.16]{KH95}, and Lemma~\ref{Mane lemma}, $(\Omega(f), f|_{\Omega(f)} )$ satisfies the (ACP), (EI), and (NLP) properties of \cite{HLMXZ25}. Note that for every $\phi\in \cP$, $\Mmax(M,f,\phi) = \Mmax (\Omega(f), f|_{\Omega(f)}, \phi|_{\Omega(f)} )$. 
	
	If $\cP = C^1(M,\R)$, then by \cite[Theorem~1.2]{HLMXZ25}, $\Lock\bigl(f,C^1(M,\R)\bigr)$ is dense in $C^1(M,\R)$. 
	
	Assume that $\cP = \Holder{\alpha}(M,\R)$. Fix $\ve>0$ and $\phi\in \Holder{\alpha}(M,\R)$.
	By \cite[Theorem~1.1]{HLMXZ25}, there exists $\psi\in \Holder{\alpha}(\Omega(f))$ with $\Hnorm{\alpha,\Omega(f)}{\psi} < \ve$, $\delta>0$, and a periodic orbit $\cO$ of $f$ such that if $\xi\in \Holder{\alpha}(\Omega(f))$ with $\Hnorm{\alpha,\Omega(f)}{\xi}<\delta$ then $\Mmax (\Omega(f), f|_{\Omega(f)}, \phi|_{\Omega(f)} + \psi + \xi ) = \{\mu_{\cO}\}$. 
    By \cite[Theorem~1.33]{Wea18}, there exists $\tpsi\in \Holder{\alpha}(M,\R)$ with $\Hnormbig{\alpha,M}{\tpsi} \le \ve$ and $\tpsi|_{\Omega(f)} = \psi$. For every $\zeta\in \Holder{\alpha}(M,\R)$ with $\Hnorm{\alpha,M}{\zeta}<\delta$, then $\Hnorm{\alpha,\Omega(f)}{\zeta|_{\Omega(f)}}<\delta$. So we have 
	\begin{equation*}
		\Mmax \bigl(M,f,\phi+\tpsi+\zeta \bigr) 
        = \Mmax  \bigl(\Omega(f), f|_{\Omega(f)}, (\phi + \psi +\zeta)|_{\Omega(f)} \bigr) 
        = \{\mu_{\cO}\}.
	\end{equation*}
	Hence, $\phi+\tpsi \in \Lock\bigl(f,\Holder{\alpha}(M,\R)\bigr)$, which implies $\Lock\bigl(f,\Holder{\alpha}(M,\R)\bigr)$ is dense in $\Holder{\alpha}(M,\R)$. 
\end{proof}

\begin{proof}[\bf Proof of Theorem~\ref{JTPO AxiomA}]
    Suppose $r\in \N$, and $\cP$ is either $C^1(M,\R)$ or $\Holder{\alpha}(M,\R)$.
    By Proposition~\ref{TPO AxiomA}, $\Lock(\cA^r(M) , \cP)$ is dense in $\cA^r(M) \times \cP$.
    So it suffices to prove that $\Lock(\cA^r(M) , \cP)$ is open.

Note that every $\phi \in \cP$ that appears in a pair 
    $(f,\phi)\in \Lock(\cA^r(M),\cP)$ must be nonconstant. 
    Indeed, if $\phi$ is a constant function, then every $f$-invariant measure 
    attains the maximum ergodic average $\mpe(f,\phi)$, so 
    $\Mmax(f,\phi) = \cM(M,f)$; in particular the maximizing measure 
    is not unique, contradicting the PO property. 
    Hence no constant function $\phi$ can belong to $\Lock(\cA^r(M),\cP)$.
    
    Now fix $(f,\phi) \in \Lock(\cA^r(M) , \cP)$; by the above observation, 
    $\phi$ is nonconstant. 
    By Lemmas~\ref{A(a)},~\ref{A(b)}, and~\ref{Mane lemma}, 
    $f$ is $\cA^r(M)$-stably hyperbolic. 
    Since $\phi$ is not a constant, Theorem~\ref{openness} implies 
    that $(f,\phi)$ is in the interior of $\Lock(\cA^r(M), \cP)$. 
    Therefore, $\Lock(\cA^r(M) , \cP)$ is an open subset of $\cA^r(M) \times \cP$.
\end{proof}

\begin{rem}
    Lemma~\ref{A(a)} provides the stronger conclusion that $h_g$ can 
    be taken to be a homeomorphism for $g$ sufficiently close to $f$, 
    but in fact this additional property is not 
needed
for the proof of Theorem~\ref{JTPO AxiomA} itself.
    The abstract framework of Section~\ref{joint perturbation} requires 
    only the intertwining properties of $h_g$ and $i_g$ from 
    condition (IS) in Definition~\ref{Fstablyhyperbolic}, and these alone suffice to establish that 
    $\{(f,\phi)\in\Lock(\cA^r(M),\cP):\phi\text{ is nonconstant}\}$ 
    is open and dense in $\cA^r(M)\times\cP$.

    In all applications treated in this paper, we prove the existence of a homeomorphism $h_g$, so that condition (IS) becomes automatic. We nevertheless state (IS) in the above abstract form, since for other natural choices of $\cF$ such a normalisation may not be available; in that context the full generality of (IS) may become relevant.
\end{rem}

\section{Rational maps on the Riemann sphere: proof of Theorems~\ref{hrational} and~\ref{realqua}}\label{sec.hr}

Let $\Hseminorm{}{\,\cdot\,}$ be a conformal metric on $\widehat{\C}$ with induced distance function $d$. 
Let $f \: \widehat{\C} \to \widehat{\C}$ be a rational map with $\deg f\ge 2$, 
and denote the Julia set of $f$ by $J(f)$. 
An $f$-periodic point $x$ of period $p\in \N$,
and its orbit, are called \emph{attracting} (resp.~\emph{repelling}) if $\abs{(f^p)'(x)} <1$ (resp.~$\abs{(f^p)'(x)} >1$), and \emph{hyperbolic} if $x$ is either attracting or repelling. 
Let $AP(f)$ denote the set of all attracting periodic points of $f$.
For an attracting periodic orbit $\cO$, the \emph{attracting basin} of $\cO$ is defined as $B_f(\cO) \= \bigl\{x\in \widehat{\C}: \lim_{n\to +\infty} d(f^n(x),\cO)=0\bigr\}$.

The following lemma is a standard result:

\begin{lemma}\label{basicp.rational}
	If $f\: \widehat{\C}\to \widehat{\C}$ is a hyperbolic rational map with $\deg f \ge 2$, then its nonwandering set $\Omega(f)$ is equal to the disjoint union of its Julia set and its finitely many attracting periodic orbits, and also equal to the closure of the set of $f$-periodic points. 
\end{lemma}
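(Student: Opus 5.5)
The plan is to prove the two inclusions $\Omega(f)\subseteq J(f)\cup AP(f)$ and $J(f)\cup AP(f)\subseteq \overline{\Per(f)}\subseteq\Omega(f)$, and to check disjointness and finiteness along the way. The tools are the Fatou--Sullivan classification of periodic Fatou components and the standard consequences of hyperbolicity.

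First, the easy inclusions. Periodic points are nonwandering, and $\Omega(f)$ is closed, so $\overline{\Per(f)}\subseteq\Omega(f)$. Repelling periodic points are dense in $J(f)$ (Fatou--Julia), so $J(f)\subseteq\overline{\Per(f)}$. Each attracting periodic orbit is in $\Per(f)$, hence $J(f)\cup AP(f)\subseteq\overline{\Per(f)}$.

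Next, finiteness and disjointness. By Fatou's theorem every attracting cycle attracts a critical point, and there are at most $2\deg f-2$ critical points, so $AP(f)$ is finite. Attracting periodic points lie in the Fatou set, so $AP(f)\cap J(f)=\emptyset$.

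The main step is the reverse inclusion $\Omega(f)\subseteq J(f)\cup AP(f)$, which amounts to showing that every Fatou point $z\notin AP(f)$ is wandering. For a hyperbolic map the postcritical closure is disjoint from $J(f)$, so there are no parabolic cycles and no Siegel disks or Herman rings (their boundaries would lie in the postcritical closure, inside $J(f)$). Combining the classification of periodic components with Sullivan's no-wandering-domains theorem, every Fatou orbit converges to an attracting cycle. Equivalently, I would use the standard fact that hyperbolicity means every critical orbit converges to an attracting cycle, together with the classification.

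With that in hand, I would pick a small closed neighbourhood $N$ of $AP(f)$ with $f(N)\subseteq \operatorname{int} N$, which linearization near each attracting cycle provides. Take $z\in F(f)\setminus AP(f)$. Shrinking $N$ if needed so that $z\notin N$ and $z$ is not in $f^{-k}$ of the cycle points for small $k$, choose an open neighbourhood $V\ni z$ with $\overline V\subseteq F(f)$ compact, $V\cap N=\emptyset$, and $V$ disjoint from its first few iterates. By normality and convergence to the cycles, uniformly on the compact set $\overline V$, there is $n_0$ with $f^n(\overline V)\subseteq N$ for all $n\ge n_0$. For $1\le n<n_0$, the disjointness $f^n(V)\cap V=\emptyset$ follows by shrinking $V$, provided $z$ is not periodic. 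A non-attracting periodic point in the Fatou set cannot occur here, since the dichotomy rules out neutral cycles. Hence $z$ is wandering.

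The identity $\Omega(f)=\overline{\Per(f)}$ then follows from the chain of inclusions. I expect the main obstacle to be citing the structure of Fatou components cleanly; I would cite Milnor's book for the statement that a hyperbolic map has every Fatou orbit converging to an attracting cycle.
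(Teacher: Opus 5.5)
Your proposal is correct and follows the same route as the paper: density of repelling cycles in $J(f)$ gives $J(f)\cup AP(f)\subseteq\overline{\{\text{periodic points}\}}\subseteq\Omega(f)$, Fatou's critical-point bound gives finiteness of $AP(f)$, and the fact that every Fatou orbit of a hyperbolic map converges to an attracting cycle (Milnor, Theorem~19.1) gives $\Omega(f)\subseteq J(f)\cup AP(f)$. Your trapping-neighbourhood argument spells out why non-periodic Fatou points are wandering, a step the paper treats as immediate; the clause about $z$ avoiding $f^{-k}$ of the cycle points is superfluous, since only the non-periodicity of $z$ is used.
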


\begin{proof} 
	By \cite[Theorem~19.1]{Mi06}, every orbit in the Fatou set of $f$ converges to an attracting periodic orbit, so $\Omega(f) \subseteq J(f) \cup AP(f)$.
    On the other hand, by \cite[Theorem~14.1]{Mi06}, the set of repelling periodic points is dense in $J(f)$,
    so the set of $f$-periodic points is dense in $J(f)\cup AP(f)$,
    and every periodic point is nonwandering, so $J(f) \cup AP(f) \subseteq \Omega(f)$. 
    Since $f|_{J(f)}$ is expanding, $J(f) \cap AP(f) = \emptyset$, so indeed 
    $\Omega(f)$ is the disjoint union of $J(f)$ and $AP(f)$, and the $f$-periodic points are dense in $\Omega(f)$.
     The number of attracting periodic points is finite
     (see e.g.~\cite[Theorem~8.2]{Mi06}), so the result is proved.
\end{proof}

We next prove,
by means of
the following Lemmas~\ref{A(a)hr},~\ref{disexpanding},~\ref{A(b)hr}, and~\ref{A(c)hr},
that every hyperbolic rational map of degree $m \ge 2$ is $\cH\cR^m$-stably hyperbolic.
We first prove the following enhanced version of condition (IS) in Definition~\ref{Fstablyhyperbolic}:

\begin{lemma}\label{A(a)hr}
	Suppose $f\in \cH\cR^m$ for some $m\geq 2$. For each $\ve>0$, and for each $g\in \cH\cR^m$ sufficiently close to $f$, there exists a homeomorphism $h_g \: \Omega(f) \to \Omega(g)$ with $d_\infty(h_g,\id)<\ve$ and $ h_g \circ f = g \circ h_g$. Moreover, $h_g(J(f)) = J(g)$ and $h_g(AP(f)) = AP(g)$.
\end{lemma}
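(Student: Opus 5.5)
We build $h_g$ separately on the two pieces of $\Omega(f)$ given by Lemma~\ref{basicp.rational}, namely $J(f)$ and the finite set $AP(f)$, and then glue. Fix $\ve>0$.

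\emph{Julia set.} Since $\cH\cR^m$ is open in $\cR^m$ and $f$ is hyperbolic, $f$ is $J$-stable. We use the $\lambda$-lemma of Ma\~n\'e--Sad--Sullivan and Lyubich \cite{Ly83, MSS83}. Take a small connected neighbourhood (a polydisc in local holomorphic coordinates) $\Lambda\subseteq\cH\cR^m$ of $f$ with base point $f$. Repelling periodic points of $f$ move holomorphically and remain repelling throughout $\Lambda$, because no map in $\Lambda$ has a nonrepelling cycle in $J$; this gives a holomorphic motion of a dense subset of $J(f)$. By the $\lambda$-lemma this motion extends to a holomorphic motion of the closure $J(f)$, giving maps $h_g\:J(f)\to J(g)$. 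These are injective and continuous, they conjugate $f$ to $g$ by density and continuity, and they are surjective because the moved repelling points are dense in $J(g)$ and $h_g$ has compact image. The $\lambda$-lemma also gives joint continuity in $(g,z)$, so $d_\infty(h_g|_{J(f)},\id)\to0$ as $g\to f$. Both $h_g$ and its inverse are continuous bijections of compact sets, so $h_g$ is a homeomorphism.

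\emph{Attracting cycles.} Each attracting periodic point $x$ of period $p$ satisfies $\abs{(f^p)'(x)}<1$, so $(f^p)'(x)\ne1$. By the implicit function theorem applied to $F(g,z)=g^p(z)-z$, there is a continuous local continuation $x(g)$ that is $g$-periodic of period $p$ and still attracting. Set $h_g(x)\=x(g)$; this commutes with the dynamics because the continuation of $f(x)$ is $g(x(g))$, by uniqueness.

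\emph{Surjectivity onto $AP(g)$ (the main obstacle).} We must rule out new attracting cycles of $g$ that are not continuations of cycles of $f$. The number of attracting cycles equals the number of cycles of Fatou components containing critical points. A cleaner route is this: each attracting basin contains a critical point, and for hyperbolic maps the post-critical set lies in the union of the basins, at positive distance from $J(f)$. For $g$ near $f$, every critical point of $g$ is near a critical point of $f$, so its orbit is eventually attracted into a small neighbourhood of one of the continued cycles $x(g)$, where $g^p$ is a contraction. Hence every attracting cycle of $g$ lies within a small neighbourhood of $AP(f)$. Inside such a neighbourhood the continuation is the unique periodic point of that period, by the contraction, so $AP(g)=h_g(AP(f))$ and $h_g$ is a bijection there. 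Shrinking the neighbourhood of $f$ gives $d(x(g),x)<\ve$.

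\emph{Gluing.} Since $J(f)\cap AP(f)=\emptyset$ and $J(g)\cap AP(g)=\emptyset$, combining the two pieces gives a bijection $h_g\:\Omega(f)\to\Omega(g)$ by Lemma~\ref{basicp.rational} applied to $g$. It is continuous on each of two disjoint compact pieces, hence a homeomorphism, and it satisfies $h_g\circ f=g\circ h_g$, $h_g(J(f))=J(g)$, $h_g(AP(f))=AP(g)$ and $d_\infty(h_g,\id)<\ve$.
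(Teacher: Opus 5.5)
\textbf{Verdict: correct, but by a different route from the paper, with one step that needs a line of justification.}

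\textbf{How the paper argues.} The paper never splits $\Omega(f)$. Every cycle of every map near $f$ is hyperbolic, so it applies Lyubich's Main Lemma \cite{Ly83} to the set of \emph{all} periodic points. Their closure is $\Omega(f)$ by Lemma~\ref{basicp.rational}, so this gives $h_g\colon \Omega(f)\to\Omega(g)$ in one step. The identities $h_g(J(f))=J(g)$ and $h_g(AP(f))=AP(g)$ then follow purely topologically: attracting periodic points are isolated in $\Omega$, Julia sets are perfect, and a homeomorphism preserves isolated points.

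\textbf{How your route differs.} You move only the repelling cycles, whose closure is $J(f)$. You continue the attracting cycles separately by the implicit function theorem. To exclude new attracting cycles you add a dynamical input: each attracting cycle attracts a critical point, and critical points move continuously. Then you glue the two pieces. This treats the attracting part more explicitly and gives $h_g(J(f))=J(g)$ by construction. The cost is more steps and an ingredient (Fatou's critical-point theorem) that the paper does not need.

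\textbf{The step that needs justification.} For surjectivity onto $J(g)$ you assert that the continued repelling points are dense in $J(g)$. This is true but not automatic.
\begin{itemize}
\item Over your simply connected $\Lambda$, no map has a \emph{neutral} cycle. That is the real reason multipliers cannot cross the unit circle and continuations cannot collide, rather than ``no nonrepelling cycle in $J$''.
\item Consequently every fixed point of $g^n$ is simple, and $\{(g,z): g^n(z)=z\}$ is a proper, unbranched $(m^n+1)$-sheeted covering of $\Lambda$. It is therefore trivial.
\item Hence every cycle of $g$ is the continuation of a cycle of $f$ of the same type.
\end{itemize}
This counting fact is what the paper's citation of Lyubich implicitly uses. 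It also gives $AP(g)=h_g(AP(f))$ directly, so your critical-point argument becomes unnecessary.

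Alternatively, you can argue by complete invariance. Since $J(f)$ contains no critical values, each point of $J(f)$ has $m$ distinct preimages. Injectivity of $h_g$ and the conjugacy then show that $h_g(J(f))$ is a closed, completely $g$-invariant infinite set, so it contains $J(g)$.
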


\begin{proof}
	Note that $\cH\cR^m$ is an open set (\cite[p.~205]{Mi06}). So rational maps of degree $m$ in a neighbourhood of $f$ are hyperbolic, so each of their periodic points is hyperbolic. For any rational map $g\: \widehat{\C} \to \widehat{\C}$, denote the set of $g$-periodic points by $P(g)$. By Lemma~\ref{basicp.rational}, $\Omega(g)$ is equal to the closure of $P(g)$. Thus, by \cite[Main~Lemma]{Ly83} (see also \cite{MSS83} or \cite{MS98}), for $g\in \cH\cR^m$ sufficiently close to $f$, there exists a topological conjugacy $h_g \: \Omega(f)\to \Omega(g)$. Furthermore (see \cite[Main Lemma]{Ly83} and \cite[Section~2.1]{Ly86}), $h_g$ converges to the identity uniformly as $g$ converges to $f$. If $g \in \cH \cR^m$ and the conjugacy $h_g$ exists, by \cite[Corollary~4.14]{Mi06}, $J(f)$ and $J(g)$ have no isolated points. But every point in $AP(f)$ is isolated in $\Omega(f)$, and every point in $AP(g)$ is isolated in $\Omega(g)$. Now $h_g \: \Omega(f) \to \Omega(g)$ is a homeomorphism, so $h_g(J(f))=J(g)$ and $h_g(AP(f))= AP(g)$, and the result is proved.
\end{proof}

Recall that if $(X,d)$ is a compact metric space and $T \: X \to X$, then $T$ is \emph{distance-expanding} if there exist constants $\eta>0$ and $\theta>1$ such that for any two distinct points $x, \, y\in X$, if $d(x,y)<\eta$ then $d(T(x),T(y)) \ge \theta d(x,y)$. We shall refer to $(\eta,\theta)$ as a pair of \emph{expanding constants} for $T$. 

\begin{lemma}\label{disexpanding}
Suppose $f\in \cH\cR^m$ for some $m\geq 2$.
	 Equip $\widehat{\C}$ with a conformal metric $\Hseminorm{}{\,\cdot\,}$ satisfying (\ref{expanding}) for some $\lambda>1$, and let $d$ denote its induced distance. Then the restriction $f|_{J(f)}$ is an open mapping, and is distance-expanding.
     Moreover, the expanding constants can be chosen uniformly over all 
rational maps in some neighbourhood of $f$.
     More precisely, there exist a neighbourhood $N \subseteq \cH\cR^m$ of $f$, and constants $\eta>0, \, \theta>1$, such that for all $g\in N$ and distinct points $x, \,y \in J(g)$, if $d(x,y) < \eta$, then $d(g(x),g(y)) \ge \theta d(x,y)$.
\end{lemma}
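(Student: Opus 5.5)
Openness of $f|_{J(f)}$ comes first. The Julia set is completely invariant: $f^{-1}(J(f)) = J(f)$. A nonconstant rational map is an open map of $\widehat{\C}$. For $U$ open in $\widehat{\C}$, complete invariance gives $f(U\cap J(f)) = f(U)\cap J(f)$, since any $w\in f(U)\cap J(f)$ has a preimage in $U$, and that preimage lies in $J(f)$. So $f|_{J(f)}$ is open.

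For uniform distance-expansion, the plan is to upgrade the infinitesimal estimate (\ref{expanding}) to a local metric estimate that is robust in $g$. We fix $1<\theta<\lambda$.

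\emph{Step 1: uniform infinitesimal expansion on a neighbourhood.} By continuity of $z\mapsto \Hseminorm{}{\mathrm{D}f(z)}$, taken as the minimal stretch (conformal, so a scalar), there is an open neighbourhood $W$ of $J(f)$ on which the stretch is at least $\lambda_1\in(\theta,\lambda)$. The coefficients of $g$ converge to those of $f$ as $g\to f$ in $\cR^m$. Hence $g\to f$ in $C^1$ on $\widehat{\C}$, uniformly, away from no points since degree is preserved. So there is a neighbourhood $N_1$ on which $\Hseminorm{}{\mathrm{D}g(z)}\ge\lambda_2>\theta$ for all $z$ in a slightly smaller open set $W'\supseteq \overline{B(J(f),2\epsilon)}$.

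\emph{Step 2: Julia sets stay in $W'$.} By Lemma~\ref{A(a)hr}, $h_g(J(f))=J(g)$ with $d_\infty(h_g,\id)<\epsilon$. Hence $J(g)\subseteq B(J(f),\epsilon)$ for $g$ in some neighbourhood $N_2$.

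\emph{Step 3: from derivative to distance, which is the main obstacle.} A lower bound on $\abs{\mathrm{D}g}$ along a geodesic from $x$ to $y$ gives only an upper bound on $d(g(x),g(y))$ directly, so we instead use local inverse branches. Choose $\eta\le\epsilon$ small enough that, uniformly for $g\in N_1\cap N_2$, two things hold:
\begin{enumerate}[label=(\alph*)]
\item $g$ is injective on each ball $B(x,3\eta)$ with $x\in B(J(f),\epsilon)$;
\item $g(B(x,\eta))$ contains the ball $B(g(x),\theta\eta')$ for some suitable $\eta'$.
\end{enumerate}
Both follow from the uniform $C^1$ closeness together with the nonvanishing derivative on the compact set $\overline{B(J(f),2\epsilon)}$, via a quantitative inverse function theorem; the moduli of continuity of $\mathrm{D}g$ are uniform over $g$ near $f$.

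Let $\gamma$ be a geodesic between $g(x)$ and $g(y)$. If $d(g(x),g(y))$ is small, $\gamma$ lies in the ball where the inverse branch $g^{-1}$ mapping $g(x)\mapsto x$ is defined. The branch sends $g(y)$ to $y$, by injectivity and smallness. Its derivative has norm at most $1/\lambda_2$ on $B(J(f),2\epsilon)$. Integrating along $\gamma$ gives $d(x,y)\le \lambda_2^{-1}d(g(x),g(y))$.

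One more check is needed. If $d(x,y)<\eta$, then $d(g(x),g(y))\le 2\LIP(f)\,\eta$ is small. So we shrink $\eta$ so that this distance falls within the domain of the inverse branch, and also ensure the geodesic's preimage stays within $B(J(f),2\epsilon)$.

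Taking $N=N_1\cap N_2$ then yields the claim, with this $\eta$ and with $\theta\le\lambda_2$.
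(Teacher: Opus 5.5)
Your proposal is correct, but it obtains the uniform expansion by a different mechanism from the paper.

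\emph{The paper's route.} The paper linearizes via the exponential map. It writes $d(g(x),g(y))=\abs{F_g(v)}$, where $v=\exp_x^{-1}(y)$ and $F_g=\exp_{g(x)}^{-1}\circ g\circ\exp_x$. It then splits $F_g=Tg+\phi_g$ and reads off the lower bound $\abs{Tg(v)}-\abs{\phi_g(v)}\ge(\theta_0-\ve)\abs{v}$. The uniform smallness of the Lipschitz constant of the remainder $\phi_g$ is imported from \cite[Lemma~4.11]{Wen16}, with a footnote extending that lemma from diffeomorphisms to local diffeomorphisms. This deliberately mirrors the proof of Lemma~\ref{A(b)}, so the verifications of stable hyperbolicity in the Axiom~A and rational settings share the same machinery.

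\emph{Your route.} You bound $d(x,y)$ from above by lifting a geodesic from $g(x)$ to $g(y)$ through a local inverse branch whose derivative is at most $\lambda_2^{-1}$. This uses only uniform $C^1$ convergence $g\to f$ on $\widehat{\C}$ (joint holomorphy in coefficients and $z$) together with a quantitative inverse function theorem. It is therefore self-contained and avoids the cited lemma. The price is the injectivity and covering bookkeeping needed to guarantee that the branch sends $g(y)$ back to $y$. Your openness argument, via the open mapping theorem for nonconstant holomorphic maps plus complete invariance, is also slightly cleaner than the paper's appeal to a nonvanishing derivative near $J(f)$.

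\emph{Fixing the order of constants.} As written, items (a) and (b) tie both the injectivity radius and the branch domain to $\eta$. With that coupling, the instruction to ``shrink $\eta$ so that $d(g(x),g(y))\le 2\LIP(f)\,\eta$ falls within the domain of the inverse branch'' is circular: the domain also shrinks roughly like $3\lambda_2\eta$, which need not exceed $2\LIP(f)\,\eta$. The fix is to choose constants in this order:
\begin{enumerate}[label=(\arabic*)]
\item Fix, independently of $\eta$, a uniform injectivity radius $r_0\le\ve$ and a uniform branch-domain radius $s_0$ from the quantitative inverse function theorem.
\item Shrink $N$ so that $\LIP(g)\le 2\LIP(f)$ for $g\in N$ (this follows from uniform $C^1$ convergence, and the paper builds it into $N_0$).
\item Only then take $\eta<\min\{r_0,\,s_0/(2\LIP(f))\}$.
\end{enumerate}
With this ordering, your lifting argument goes through exactly as you describe.
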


\begin{proof}
	See Appendix~\ref{A}.
\end{proof}

Lemma~\ref{disexpanding} allows us to verify condition (RHE) in Definition~\ref{Fstablyhyperbolic}:

\begin{lemma}\label{A(b)hr}
If $f\in \cH\cR^m$ for some $m\geq 2$,
	then $f$ satisfies condition (RHE) in Definition~\ref{Fstablyhyperbolic} for $\cF\=\cH\cR^m$.
\end{lemma}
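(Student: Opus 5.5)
Since $\Omega(g)=J(g)\sqcup AP(g)$ by Lemma~\ref{basicp.rational}, the plan is to treat the Julia-set part via the uniform expansion of Lemma~\ref{disexpanding} and to show that the attracting part never interacts with it at scale $\delta$. Fix the neighbourhood $N$ and constants $\eta>0$, $\theta>1$ from Lemma~\ref{disexpanding}. Using Lemma~\ref{A(a)hr} (with $\ve$ small), shrink $N$ to a neighbourhood $U$ such that for $g\in U$ the conjugacy $h_g$ exists, maps $J(f)$ onto $J(g)$ and $AP(f)$ onto $AP(g)$, and $d_\infty(h_g,\id)<\ve$. Set $\gamma:=\min\{d(J(f),AP(f)),\Delta(AP(f))\}>0$ (or $\Delta(AP(f))$ alone if $J(f)$ were empty, which it is not). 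Taking $\ve<\gamma/4$ gives $d(J(g),AP(g))\ge\gamma/2$ and $\Delta(AP(g))\ge\gamma/2$ for all $g\in U$. Now choose $\delta:=\min\{\eta,\gamma/2\}$, $\lambda:=\theta$, and $K:=1$.

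Take $x,y\in\Omega(g)$ with $\max_{0\le m\le n}d(g^m(x),g^m(y))<\delta$. Split into cases.

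\emph{Case 1: one of $x,y$ lies in $J(g)$ and the other in $AP(g)$.} This gives $d(x,y)\ge\gamma/2\ge\delta$, contradicting the hypothesis, so this case cannot occur.

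\emph{Case 2: $x,y\in AP(g)$.} Since $\Delta(AP(g))\ge\gamma/2\ge\delta$, we must have $x=y$, and then all distances vanish, so (\ref{assumptionb}) holds trivially.

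\emph{Case 3: $x,y\in J(g)$.} Since $J(g)$ is $g$-invariant, all $g^m(x),g^m(y)$ lie in $J(g)$ and are within $\eta$ of each other. Lemma~\ref{disexpanding} applied iteratively gives $d(g^{j+1}x,g^{j+1}y)\ge\theta\,d(g^jx,g^jy)$ for each $j<n$. Hence
\[
d(g^m(x),g^m(y))\le\theta^{-(n-m)}d(g^n(x),g^n(y))\le\lambda^{-\min\{m,n-m\}}\bigl(d(x,y)+d(g^n(x),g^n(y))\bigr).
\]
This is (\ref{assumptionb}) with $K=1$.

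The only delicate point is the uniform separation in Cases 1 and 2, namely that $J(g)$ and $AP(g)$ stay uniformly apart and the attracting orbits keep a uniform gap as $g$ varies. This is exactly what the homeomorphism $h_g$ being $\ve$-close to the identity and respecting the splitting (Lemma~\ref{A(a)hr}) provides. The expanding estimate itself is immediate from the uniform constants in Lemma~\ref{disexpanding}. One should also check that the metric $d$ used in Definition~\ref{Fstablyhyperbolic} is the conformal metric of Lemma~\ref{disexpanding}. If a different metric is used, the two are bi-Lipschitz equivalent on the compact sphere, which only changes $K$ and $\delta$ by constant factors.
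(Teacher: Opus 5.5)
Your proposal is correct and follows essentially the same route as the paper. You use Lemma~\ref{A(a)hr} to keep $J(g)$ and $AP(g)$ (and the individual points of $AP(g)$) uniformly separated, choose $\delta$ below that separation and below the expansion scale $\eta$ of Lemma~\ref{disexpanding}, and then get (\ref{assumptionb}) with $K=1$, $\lambda=\theta$ by iterating the uniform expansion on $J(g)$ backwards from time $n$. The differences are only cosmetic. The paper takes $\delta=\min\{d(J(f),AP(f))/3,\ \Delta(AP(f))/3,\ \eta\}$ and requires $d_\infty(h_g,\id)<\delta$. Your explicit Case~2 also handles the trivial situation $x=y\in AP(g)$, which the paper's assertion that such $x,y$ must lie in $J(g)$ passes over.
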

\begin{proof}
	Let $\eta>0, \, \theta>1$ be the  constants, and $N$ the neighbourhood, that are guaranteed by Lemma~\ref{disexpanding}. Now $J(f)$ and $AP(f)$ are disjoint compact sets, and $AP(f)$ is finite, so $d(J(f), AP(f))>0$, and $\Delta(AP(f))$ is well defined. Define
	\begin{equation}
		\delta \= \min \bigl\{ 3^{-1} d(J(f), AP(f)), \, 3^{-1}\Delta(AP(f)), \, \eta \bigr\}. \label{delta} 
	\end{equation}
	By Lemma~\ref{A(a)hr}, there exists a neighbourhood $U_0$ of $f$ such that if $g\in U_0$ then $h_g$ exists and $d_\infty(h_g, \id) < \delta$. Thus, by (\ref{delta}), for every $g\in U_0$, we have $d(J(g),AP(g)) > d(J(f),AP(f))-2\delta \ge \delta$ and $\Delta(AP(g)) \ge \Delta(AP(f))-2\delta \ge \delta$. This means that if $g\in U_0$ and $x, \,y \in \Omega(g)$ with $d(x,y)<\delta$, then $x, \, y \in J(g)$. Define $U \= N \cap U_0$. Fix $g\in U$, $n\in \N$, and $x, \, y\in \Omega(g)$ with $\max \bigl\{ d\bigl(g^i(x),g^i(y)\bigr) : 0\leq i\leq n \bigr\} <\delta$. Then $\bigl\{ x, \, y, \, g^1(x), \, g^1(y), \, \dots, \, g^n(x), \,g^n(y) \bigr\} \subseteq J(g)$. Moreover, by Lemma~\ref{disexpanding}, we have
	$d\bigl(g^i(x),g^i(y)\bigr) \le \theta^{i-n} d(g^n(x), g^n(y))$,
	which in particular implies that condition (RHE) in Definition~\ref{Fstablyhyperbolic} holds with $K\=1$ and $\lambda\=\theta$.
\end{proof}

We now verify condition (ML) in Definition~\ref{Fstablyhyperbolic}:

\begin{lemma}\label{A(c)hr}
If $f\in \cH\cR^m$ for some $m\geq 2$,
	then $f$ satisfies condition (ML) in Definition~\ref{Fstablyhyperbolic} for $\cF\=\cH\cR^m$.
\end{lemma}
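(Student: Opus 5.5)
We construct the sub-action $u$ separately on the two disjoint pieces of $\Omega(f)=J(f)\sqcup AP(f)$ given by Lemma~\ref{basicp.rational}, and then patch the pieces together. Fix $\alpha\in(0,1]$ and $\phi\in\Holder{\alpha}(\widehat{\C},\R)$, and write $Q\=\mpe(f,\phi)$. Every $f$-invariant measure is supported on $\Omega(f)$. Moreover $J(f)$ and each attracting orbit are compact and $f$-invariant. Hence $\mpe(J(f),f|_{J(f)},\phi)\le Q$, and $\int\phi\,\mathrm{d}\mu_{\cO}\le Q$ for each attracting orbit $\cO\subseteq AP(f)$.

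\emph{Julia set.} By Lemma~\ref{disexpanding}, $f|_{J(f)}$ is an open distance-expanding Lipschitz map of the compact set $J(f)$, with expanding constants $(\eta,\theta)$. I will invoke the Ma\~n\'e lemma for open distance-expanding maps (the one used in \cite{HHJL25}, in the form of \cite{LS26}). It yields $u_J\in\Holder{\alpha}(J(f),\R)$ with
\begin{equation*}
\phi+u_J-u_J\circ f\le \mpe\bigl(J(f),f|_{J(f)},\phi\bigr)\le Q \quad\text{on } J(f),
\qquad \Hseminorm{\alpha}{u_J}\le L_J\Hseminorm{\alpha}{\phi},
\end{equation*}
where $L_J$ depends only on $f$ and $\alpha$, through $\eta$, $\theta$ and $\LIP(f)$. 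If that reference needs local connectedness, I would instead use the standard construction $u_J(x)=\sup\bigl\{S_n^f(\phi-Q_J)(y)\bigr\}$ over backward branches, run on inverse branches of size $\eta$. Distance expansion gives summable Hölder distortion along inverse branches, bounded by $\Hseminorm{\alpha}{\phi}\,\eta^\alpha/(1-\theta^{-\alpha})$. The delicate point is that $J(f)$ may be disconnected, so $u_J$ only gets a controlled Hölder constant at scales below $\eta$. At larger scales $\abs{u_J(x)-u_J(y)}\le 2\Hnorm{\infty}{u_J}$, and I would normalize $u_J$ so that its oscillation is $O(\Hseminorm{\alpha}{\phi})$. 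The oscillation of $\phi$ on $J(f)$ is at most $\Hseminorm{\alpha}{\phi}(\diam\widehat{\C})^\alpha$. This converts the bound into a global Hölder bound with constant multiplied by $\eta^{-\alpha}$.

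\emph{Attracting orbits.} For each attracting orbit $\cO=\{x_0,\dots,x_{p-1}\}$ with $f(x_i)=x_{i+1}$ (indices mod $p$), set $c_{\cO}\=\int\phi\,\mathrm{d}\mu_{\cO}$. Define $u(x_0)\=0$ and
\begin{equation*}
u(x_k)\=\sum_{i=0}^{k-1}\bigl(\phi(x_i)-c_{\cO}\bigr)\quad\text{for } 1\le k\le p-1.
\end{equation*}
Then $\phi+u-u\circ f=c_{\cO}\le Q$ on $\cO$, and the telescoping closes up because $\sum_{i=0}^{p-1}(\phi(x_i)-c_{\cO})=0$. Since each difference $\phi(x_i)-c_{\cO}$ is bounded by the oscillation of $\phi$ on $\cO$, we get $\abs{u}\le p\,\Hseminorm{\alpha}{\phi}(\diam\widehat{\C})^\alpha$ on $\cO$.

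\emph{Patching.} Define $u$ on $\Omega(f)$ by $u_J$ on $J(f)$ and by the formula above on each attracting orbit. The inequality $\phi+u-u\circ f\le Q$ holds on $\Omega(f)$, because each piece is forward invariant. For the Hölder bound, let $\delta_0>0$ be the minimum of $d(J(f),AP(f))$, the gap $\Delta(AP(f))$, and $\eta$. Two points of $\Omega(f)$ at distance less than $\delta_0$ lie in $J(f)$ and are handled by the local estimate for $u_J$. For two points at distance at least $\delta_0$,
\begin{equation*}
\abs{u(x)-u(y)}\le 2\Hnorm{\infty}{u}\le 2\Hnorm{\infty}{u}\,\delta_0^{-\alpha}\,d(x,y)^\alpha.
\end{equation*}
Since $\Hnorm{\infty}{u}\le c\,\Hseminorm{\alpha}{\phi}$ with $c$ depending only on $f$, after normalization, this gives $\Hseminorm{\alpha}{u}\le L\Hseminorm{\alpha}{\phi}$ with $L$ independent of $\phi$.

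The main obstacle is the uniform sup-norm control of $u_J$, i.e.\ $\operatorname{osc}u_J\lesssim\Hseminorm{\alpha}{\phi}$. To get it I would use topological exactness (or transitivity) of $f$ on $J(f)$. Every point of $J(f)$ has an $\eta$-dense set of preimages within a bounded number $N$ of steps. This bounds the oscillation of $u_J$ by $N\cdot\operatorname{osc}(\phi)$ plus the distortion constant.
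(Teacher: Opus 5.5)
Your proof is correct and follows the paper's route. You use the Ma\~n\'e lemma for open distance-expanding maps \cite[Proposition~3.6]{LS26} on $J(f)$, an explicit telescoping Birkhoff-sum sub-action on each attracting orbit, and patching via the positive separation $\min\{d(J(f),AP(f)),\,\Delta(AP(f))\}$; centring at the orbit average $\int\phi\,\mathrm{d}\mu_{\cO}$ rather than at $\mpe(f,\phi)$ makes no difference. The paper takes a global H\"older bound on $J(f)$ directly from that reference and normalises $u_J$ to vanish at a point to get sup-norm control, so your exactness-based oscillation argument is a valid but unneeded safeguard for the case where only a bound at scales below $\eta$ is available.
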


\begin{proof}
	The strategy of proof will be that for all $\phi \in \Holder{\alpha}\bigl(\widehat{\C}\bigr)$, the function  $u=u_\phi$ can be constructed separately on $J(f)$ and $AP(f)$.

	First we restrict $f$ to the Julia set $J(f)$. 
	By Lemma~\ref{disexpanding}, $f|_{J(f)}$ is open and distance-expanding, so
	by \cite[Proposition~3.6]{LS26} there exists $L_J>0$ such that 
    for all $\phi\in \Holder{\alpha}\bigl(\widehat{\C} \bigr)$,  
    there exists $v_\phi \in \Holder{\alpha}(J(f))$ satisfying $\Hseminorm{\alpha,J(f)}{v_\phi}\le L_J \Hseminorm{\alpha,J(f)}{\phi}$ and $\overline{\phi}+v_\phi-v_\phi\circ f \le 0$ on $J(f)$.
	By adjusting $v_\phi$ by an additive constant, we may assume that $v_\phi(x)=0$ for some $x\in J(f)$, which implies that
    \begin{equation}\label{unormbound}
		\Hnorm{\infty,J(f)}{v_\phi}  \le \bigl(\diam \widehat{\C}\bigr)^\alpha \Hseminorm{\alpha,J(f)}{v_\phi} \le L_J \bigl(\diam \widehat{\C}\bigr)^\alpha \Hseminorm{\alpha,J(f)}{\phi} \le L_J \bigl(\diam \widehat{\C}\bigr)^\alpha \Hseminorm{\alpha,\widehat{\C}}{\phi}.
	\end{equation}
		
	Next we restrict $f$ to $AP(f)$, 
    and define a function $w_\phi \: AP(f) \to \R$ as follows.
    Let $\cO_1,\, \dots,\, \cO_k$ be the finitely many attracting $f$-periodic orbits.
    Define $p_i \= \card \cO_i$ for each $1\le i \le k$, and $p \= \max_{1\le i\le k} p_i$. 
    For each integer $1\le i \le k$, choose a point $x\in \cO_i$ and define 
    \begin{equation*}
        w_\phi \bigl( f^j(x) \bigr) \= S_j^f \overline{\phi}(x) \quad \text{ for each }0\le j \le p_i-1 . 
    \end{equation*}
    By our construction, if $0\le j \le p_i-2$, then $\overline{\phi}\bigl(f^j(x)\bigr) + w_\phi \bigl(f^j(x) \bigr) - w_\phi \bigl(f^{j+1}(x) \bigr) =0$, and $\overline{\phi} \bigl(f^{p_i-1}(x) \bigr) + w_\phi \bigl(f^{p_i-1}(x) \bigr) - w_\phi \bigl(f^{p_i}(x) \bigr) = S_{p_i}^f \overline{\phi}(x) \le 0$, where the last inequality follows from the fact that $\mu_{\cO_{i}}$ is invariant. 
    
    So we conclude that $\overline{\phi} + w_\phi - w_\phi \circ f \le 0$ on $AP(f)$. 
    Moreover, by our construction, we have $\Hnorm{\infty,AP(f)}{w_\phi} \le p\Hnorm{\infty, \widehat{\C}}{\overline{\phi}}$. 
    Note that $\mpe\bigl( f,\overline{\phi} \bigr)=0$, so $\overline{\phi}(x)\ge 0$ at some $x\in \widehat{\C}$ and $\overline{\phi}(y)\le 0$ for some $y\in \widehat{\C}$, which implies that $\Hnorm{\infty,\widehat{\C}}{\overline{\phi}}\le \bigl(\diam \widehat{\C}\bigr)^\alpha \Hseminorm{\alpha,\widehat{\C}}{\phi}$. Therefore, 
	\begin{equation}\label{unormbound2}
		\Hnorm{\infty,AP(f)}{w_\phi} \le p \Hnorm{\infty,\widehat{\C}}{\overline{\phi}} \le p \bigl(\diam \widehat{\C}\bigr)^\alpha \Hseminorm{\alpha,\widehat{\C}}{\phi}.
	\end{equation} 
    
    Let $\tau \= \min \{ d(J(f), AP(f)) , \, \Delta(AP(f))\}$ and then define the function $u_\phi \: \Omega(f) \to \R$ by $u_\phi \= v_\phi$ on $J(f)$ and $u_\phi \= w_\phi$ on $AP(f)$. Fix distinct points $x, \, y\in \Omega(f)$. When $\{x, \,y\} \subseteq AP(f)$, combining the fact that $d(x,y)\ge \tau$ and (\ref{unormbound2}) gives  $\abs{ u_\phi(x) -u_\phi(y) }/d(x,y)^\alpha \le 2p\tau^{-\alpha} \bigl(\diam \widehat{\C}\bigr)^\alpha \Hseminorm{\alpha,\widehat{\C}}{\phi}$.
    When either $x\in AP(f)$ and $y\in J(f)$, or 
    $x\in J(f)$ and $y\in AP(f)$, combining the fact that $d(x,y)\ge \tau$, (\ref{unormbound}), and (\ref{unormbound2})
    gives $\abs{ u_\phi(x) -u_\phi(y) }/d(x,y)^\alpha \le (p+L_J)\tau^{-\alpha} \bigl(\diam \widehat{\C}\bigr)^\alpha \Hseminorm{\alpha,\widehat{\C}}{\phi}$.
    When $\{ x, \,y \}\subseteq J(f)$, then $\abs{ u_\phi(x) -u_\phi(y) }/d(x,y)^\alpha \le L_J \Hseminorm{\alpha,\widehat{\C}}{\phi}$. From these bounds 
    we see that $f$ satisfies condition (ML) in Definition~\ref{Fstablyhyperbolic}
    with $L \= \max \bigl\{ L_J, \, (2p+L_J) \tau^{-\alpha}\bigl(\diam \widehat{\C}\bigr)^\alpha \bigr\}$. 
\end{proof}

The final ingredient needed for the proofs of Theorems~\ref{hrational} and~\ref{realqua} is the Individual TPO for hyperbolic rational maps. 

\begin{lemma}\label{indTPOhr}
    Let $\alpha\in(0,1]$, $f\in \cH\cR^m$ for some $m\geq 2$,
    and $\cP$ denote either $\Holder{\alpha}\bigl( \widehat{\C} \bigr)$ or $C^1 \bigl(\widehat{\C}\bigr)$. Then $\Lock(f,\cP)$ is an open dense subset of $\cP$.
\end{lemma}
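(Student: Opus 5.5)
Openness of $\Lock(f,\cP)$ is immediate from the definition of the locking property, since any $\phi\in\Lock(f,\cP)$ has a neighbourhood of potentials with the same unique maximizing measure. So the plan is to prove density. As in Proposition~\ref{TPO AxiomA}, I would reduce to the subsystem $\bigl(\Omega(f),f|_{\Omega(f)}\bigr)$, using Lemma~\ref{basicp.rational}: $\Omega(f)=J(f)\sqcup AP(f)$. Every $f$-invariant measure is supported on $\Omega(f)$, so $\Mmax\bigl(\widehat{\C},f,\phi\bigr)=\Mmax\bigl(\Omega(f),f|_{\Omega(f)},\phi|_{\Omega(f)}\bigr)$ for every $\phi\in\cP$.

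The main step is to check that $\bigl(\Omega(f),f|_{\Omega(f)}\bigr)$ satisfies the hypotheses of \cite[Theorems~1.1 and~1.2]{HLMXZ25}. These are the (ACP), (EI) and (NLP) properties, which I expect are an Anosov-closing / shadowing property, an exponential-closing inequality of the type in (RHE), and a Ma\~n\'e-type lemma with Hölder control. I would verify them piece by piece.

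\emph{Closing.} By the choice of $\delta$ in the proof of Lemma~\ref{A(b)hr}, any pseudo-orbit segment in $\Omega(f)$ with jumps smaller than $\delta$ lies entirely in $J(f)$, or entirely in a single attracting orbit. On $J(f)$, Lemma~\ref{disexpanding} says $f|_{J(f)}$ is open and distance-expanding. Standard closing and shadowing for open expanding maps (cf.~\cite{PU10}-type arguments via inverse branches) then give (ACP) with exponential estimates. On $AP(f)$ the property is trivial, since the points are isolated periodic points.

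\emph{Exponential inequality.} (EI) is exactly the content of Lemma~\ref{A(b)hr}, with $K=1$.

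\emph{Ma\~n\'e lemma.} (NLP) is given by Lemma~\ref{A(c)hr}.

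With these in hand, for $\cP=C^1\bigl(\widehat{\C}\bigr)$ I would apply \cite[Theorem~1.2]{HLMXZ25} directly to get density. For $\cP=\Holder{\alpha}$ I would follow the proof of Proposition~\ref{TPO AxiomA} verbatim. First obtain a small $\psi\in\Holder{\alpha}(\Omega(f))$ locking $\phi|_{\Omega(f)}+\psi$ on $\Omega(f)$. Then extend $\psi$ to $\tpsi$ on $\widehat{\C}$ with the same norm bound via McShane \cite[Theorem~1.33]{Wea18}. Finally, restriction to $\Omega(f)$ does not increase Hölder norms, so $\phi+\tpsi\in\Lock(f,\cP)$.

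I expect the main obstacle to be matching the precise form of the (ACP) hypothesis in \cite{HLMXZ25}, which is phrased for hyperbolic or expanding systems, to a disconnected $\Omega(f)$ that has isolated attracting points. If the direct verification is awkward, I would bypass it as follows. Apply the known TPO result for open distance-expanding maps (Contreras-type, \cite{Co16}, or \cite{HLMXZ25} on $J(f)$ alone) to get a locking perturbation $\phi_1$ on $J(f)$, with maximizing orbit $\cO\subseteq J(f)$. Then add a small bump $-\ve'$ supported near $AP(f)$ and away from $J(f)$, together with small $\mathrm{sharpening}$ perturbations. Any ergodic measure lives either on $J(f)$ or on a single attracting orbit, so comparing the finitely many values $\int\phi\,\mathrm{d}\mu_{\cO_i}$ with $\mpe(f|_{J(f)},\phi_1)$ yields a perturbation with a unique, strictly separated maximizing periodic orbit. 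That strict separation is stable under small perturbations, which gives locking.
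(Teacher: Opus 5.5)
Your proposal is correct and takes essentially the same route as the paper. You reduce to $\bigl(\Omega(f),f|_{\Omega(f)}\bigr)$ and verify (ACP) by observing that small-jump pseudo-orbits lie either in $J(f)$, where \cite[Proposition~4.2.3]{PU10} applies because $f|_{J(f)}$ is open and distance-expanding, or in a single attracting cycle. You take (EI) and (NLP) from Lemmas~\ref{A(b)hr} and~\ref{A(c)hr}, and then conclude as in Proposition~\ref{TPO AxiomA}; this direct verification goes through, so your fallback route is not needed.
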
 

\begin{proof}
	Provided $(\Omega(f), f)$ satisfies
    conditions (ACP), (EI), and (NLP) from \cite{HLMXZ25}, the result follows by an argument analogous to the one used to prove Theorem~\ref{TPO AxiomA}. 
    
    To check that indeed $(\Omega(f),f)$ does satisfy these three conditions, we note that
	(EI) and (NLP) follow from Lemmas~\ref{A(b)hr} and~\ref{A(c)hr}, respectively. For the condition (ACP), if 
    $$0<\ve < \min \{d(J(f), AP(f)), \, \Delta(AP(f))\},$$ then since $f(J(f)) = J(f)$ and $f(AP(f))= AP(f)$, every $\ve$-pseudo orbit is either contained entirely in $J(f)$, or contained 
entirely within one of the attracting periodic orbits that makes up $AP(f)$. 
For pseudo orbits contained in a periodic orbit, property~(ACP) is 
trivially satisfied.
For pseudo orbits contained in $J(f)$, 
property~(ACP) follows from \cite[Proposition~4.2.3]{PU10}, since 
$(J(f),f)$ is open and distance-expanding by Lemma~\ref{disexpanding}.
\end{proof}

Finally, we can prove Theorems~\ref{hrational} and~\ref{realqua}.

\begin{proof}[\bf Proof of Theorem~\ref{hrational}]
	This follows by combining Lemmas~\ref{A(a)hr},~\ref{A(b)hr},~\ref{A(c)hr},~\ref{indTPOhr}, and Theorem~\ref{openness}, and using an argument analogous to the one used to prove Theorem~\ref{JTPO AxiomA}. 
\end{proof}

\begin{proof}[\bf Proof of Theorem~\ref{realqua}]
	Let $\alpha\in (0,1]$, and let $\cP$ denote either $C^1\bigl( \widehat{\C} \bigr)$ or $\Holder{\alpha}\bigl( \widehat{\C} \bigr)$. Let $\cH \cQ$ denote the set of hyperbolic real quadratic polynomials. Note that $\cQ$ and $\cH\cR^2$ are both equipped with the standard topology on the parameter space. Since $\cH \cQ = \cH\cR^2 \cap \cQ$ and $\cH\cR^2$ is open (see \cite[p.~205]{Mi06}), it follows that $\cH \cQ$ is open.
	By \cite{GS97} and \cite[p.~4]{Ly97}, $\cH \cQ$ is a dense subset of $\cQ$, so it suffices to prove the Joint TPO property for $\cH\cQ \times \cP$.
	
	By Lemmas~\ref{A(a)hr},~\ref{A(b)hr}, and~\ref{A(c)hr}, every $f \in \cH\cQ \subseteq \cH\cR^2$ is stably hyperbolic (i.e.,~satisfies Definition~\ref{Fstablyhyperbolic}) for $\cF \= \cH\cR^2$, and hence also for $\cF \= \cH\cQ$. By this and Theorem~\ref{openness}, the set $\cL_0 \= \{(f,\phi) \in \Lock(\cH\cQ,\cP) : \phi \text{ is not a constant}\}$ is an open subset of $\cH\cQ \times \cP$. Since $\operatorname{Const} \= \cH\cQ \times \{ \phi\in \cP : \phi \text{ is a constant}\}$ is nowhere dense in $\cH\cQ \times \cP$, by Lemma~\ref{indTPOhr}, $\cL_0 = \Lock(\cH\cQ,\cP) \smallsetminus \operatorname{Const}$ is dense in $\cH\cQ \times \cP$, so the result is proved.
\end{proof}

\section{$C^r$ one-dimensional maps and the logistic family: proof of Theorems~\ref{1dim} and~\ref{realqua01}}\label{sec.r1d}
We first recall some basic definitions and properties in one-dimensional dynamics (see \cite[Chapter~3]{dMvS93} for more background). We follow the conventions in \cite{dMvS93} and \cite{KSS07}.

Let $r\in \N$, $M$ be a compact interval or a circle equipped with the Euclidean metric, and $f\in C^r(M,M)$. 
A subset $K\subseteq M$ is said to be a \emph{hyperbolic} set for $f$ if $f(K)\subseteq K$ and there exist constants $C>0$ and $\theta>1$ such that $\abs{(f^n)'(x)} \ge C \theta^n$ for all $x\in K$ and $n\in \N$. 
It can be seen from \cite[p.~226]{dMvS93} that there exist a Riemannian metric $\Hseminorm{f}{\, \cdot\, }$ and a constant $\lambda>1$ such that 
\begin{equation}\label{adexpand}
	\Hseminorm{f}{\mathrm{D}f} > \lambda \quad\text{ on } K.
\end{equation}
Since $M$ is compact, any two Riemannian metrics are mutually equivalent, so the topologies of $C^r(M,\R)$ and $C^r(M,M)$ are independent of the choice of metric.

An $f$-periodic point $x$, of period $p$,
is said to be \emph{attracting} if $\abs{(f^p)'(x)} <1$,  and the orbit of an attracting periodic point is called an attracting periodic orbit.
As in the case of rational maps, if $x\in M$ is an attracting periodic point, then so is
every $y\in \cO^f(x)$.  
The map $f$ is said to be \emph{hyperbolic} (or, as alternative terminology, to satisfy \emph{Axiom~A}) if $f$ has a hyperbolic set $K(f)$
and finitely many attracting periodic points such that the forward orbit under $f$ at every point in $M\smallsetminus K(f)$ converges to an attracting periodic orbit (cf.~\cite[p.~221]{dMvS93}). 

Kozlovski,
Shen \& van Strien \cite[Theorem~2]{KSS07} showed that hyperbolic maps are dense in $C^r(M,M)$, for every $r\in \N$. We will use their result, together with our theorems from Section~\ref{joint perturbation}, 
to prove Theorem~\ref{1dim}.

We first restrict our arguments to the
maps in $C^r(M,M)$ that preserve the endpoints, in
the case that $M$ is an interval. More precisely, we denote $C_0^r(M,M) \= C^r(M,M)$ when $M$ is the circle, 
and $C^r_0(M,M) \= \{ f\in C^r(M,M) : f(\partial M) \subseteq \partial M\}$ when $M$ is a compact interval. 
Let $HC^r_0(M,M)$ denote the space of hyperbolic maps
in $C^r_0(M,M)$.

For such maps, we have:

\begin{prop}\label{endpointsp}
	Let $r\in \N$, $\alpha\in (0,1]$, $M$ be a compact interval or a circle, and $\cP$ denote $\Holder{\alpha}(M,\R)$ or $\cP \= C^1(M,\R)$. 
    The space $HC^r_0(M,M)$ 
    is an open subset of $C^r_0(M,M)$.
    If $f\in HC_0^r(M,M)$, then $f$ is $C_0^r(M,M)$-stably hyperbolic in the sense of Remark~\ref{nonwander}~(i), and if $\phi\in \Lock(f, \cP)$ is not a constant, then $(f,\phi)$ is in the interior of $\Lock(C^r_0(M,M), \cP)$.
\end{prop}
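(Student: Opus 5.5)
The proof has three parts: openness of hyperbolicity, verification of the three conditions of Definition~\ref{Fstablyhyperbolic} with $\Omega$ replaced by a suitable invariant set $\varsigma(f)$ as in Remark~\ref{nonwander}~(i), and a final appeal to Theorem~\ref{openness}.

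For $f\in HC^r_0(M,M)$, I will take
\begin{equation*}
\varsigma(f) \= K(f)\cup AP(f),
\end{equation*}
where $K(f)$ is the maximal hyperbolic set and $AP(f)$ is the finite set of attracting periodic points. Every $f$-invariant measure is supported on $\varsigma(f)$, since forward orbits off $K(f)$ converge to attracting orbits. When $M$ is an interval, the endpoints, which are mapped into $\partial M$, are periodic or preperiodic. They therefore lie either in $K(f)$ (repelling) or in basins of attracting orbits. Non-hyperbolic (neutral) endpoint orbits are excluded by hyperbolicity of $f$.

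\emph{Openness.} Openness of $HC^r_0$ in $C^r_0$ is standard: the hyperbolic set persists, and attracting orbits persist under $C^1$-small perturbations. The remaining point is that no new non-hyperbolic dynamics appears. I expect to get this from the $C^1$-openness of Axiom~A for one-dimensional maps, with a compactness argument as in \cite[Chapter~3]{dMvS93}, \cite{KSS07}.

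\emph{Condition (IS).} The hyperbolic set $K(f)$ is expanding. With respect to the adapted metric $\Hseminorm{f}{\cdot}$ of \eqref{adexpand}, $f|_{K(f)}$ is distance-expanding on a neighbourhood $W$ of $K(f)$. For $g$ $C^1$-close to $f$, set $K(g) \= \bigcap_n g^{-n}(\overline W)$. Shadowing for expanding sets then gives a conjugacy $h_g\colon K(f)\to K(g)$ with $d_\infty(h_g,\id)\to 0$; the inverse conjugacy serves as $i_g$. On the attracting orbits, $h_g$ is the continuation of each attracting periodic orbit via the implicit function theorem. Together these give (IS), exactly as in Lemma~\ref{A(a)hr}.

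\emph{Condition (RHE).} I will argue as in Lemmas~\ref{disexpanding} and~\ref{A(b)hr}, in three steps.
\begin{itemize}
\item The inequality $\Hseminorm{f}{g'}>\lambda'>1$ holds on a fixed neighbourhood of $K(f)$ uniformly for $g$ near $f$, by $C^1$-closeness. Hence the restriction of $g$ to $K(g)$ is distance-expanding with uniform constants. These are measured in the adapted metric, which is bi-Lipschitz to the Euclidean metric; the bi-Lipschitz constant is absorbed into $K$.
\item Choose $\delta$ smaller than a third of $d(K(f),AP(f))$ and of $\Delta(AP(f))$. Then close pairs in $\varsigma(g)$ lie in $K(g)$.
\item Estimate~\eqref{assumptionb} follows with $\lambda=\lambda'$.
\end{itemize}

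\emph{Condition (ML).} I will patch sub-actions as in Lemma~\ref{A(c)hr}. On $AP(f)$, use the explicit orbit-sum construction. On $K(f)$, use the Mañé lemma for distance-expanding maps \cite[Proposition~3.6]{LS26}. The bounds on the cross terms come from the positive gap $\tau$ between the pieces.

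\emph{Main obstacle.} That Mañé lemma requires openness of $f|_{K(f)}$, which can fail for interval maps. I would first try to circumvent this with the Lax--Oleinik approach of \cite{STY24}, as in Lemma~\ref{Mane lemma}, which uses only local maximality of $K(f)$ together with the shadowing estimate
\begin{equation*}
\sum d(x_i,f^i(p))^\alpha\le C\sum d(f(x_{k-1}),x_k)^\alpha.
\end{equation*}
Local maximality holds because $K(f)=\bigcap_n f^{-n}(\overline W)$.

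\emph{Conclusion.} Finally, if $\phi\in\Lock(f,\cP)$ is nonconstant, then $(f,\phi)\in\Lock(C^r_0,\cP)$ trivially, since locking for the fixed map $f$ is exactly joint locking at the point $(f,\phi)$. Theorem~\ref{openness}, which applies in the generalised setting of Remark~\ref{nonwander}~(i), then puts $(f,\phi)$ in the interior of $\Lock(C^r_0(M,M),\cP)$.
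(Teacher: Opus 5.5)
Your overall architecture matches the paper's proof. You take $\varsigma(f)=K(f)\cup AP(f)$, which is the paper's $L(f)$. You get (IS) from structural stability of hyperbolic one-dimensional maps; the paper cites \cite[Theorem~2.4, Chapter~3]{dMvS93} and extends the conjugacy over the attracting orbits. You get (RHE) from uniform expansion near $K(f)$ plus separation from $AP(f)$, and your mean-value-theorem argument is a fine one-dimensional substitute for the exponential-map argument of Lemma~\ref{disexpanding}. You obtain (ML) by patching as in Lemma~\ref{A(c)hr}, and you conclude via Theorem~\ref{openness} and Remark~\ref{nonwander}~(i).

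\textbf{The gap.} The gap is (ML) on $K(f)$, which your proposal does not actually establish. You set aside \cite[Proposition~3.6]{LS26} because openness of $f|_{K(f)}$ supposedly can fail. In its place you offer only a tentative detour through \cite{STY24}, whose hypotheses you have not checked in this non-invertible setting on a manifold with boundary. These include a suitable non-invertible form of local maximality, the invariance $f(K(f))=K(f)$ (or passage to $\bigcap_n f^n(K(f))$), and the H\"older version with a uniform constant $L$.

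\textbf{The missing observation.} What the paper uses is that $f|_{K(f)}$ \emph{is} open here. This is the openness part of Lemma~\ref{disexpanding} with $J(f)$ replaced by $K(f)$; see also the proof of Lemma~\ref{inTPO1dim}. The argument runs as follows.
\begin{enumerate}
\item $K(f)$ is exactly the complement of the union of the attracting basins. A point of $K(f)$ cannot be attracted, since $\abs{(f^n)'}$ grows exponentially along its orbit. Hence $f^{-1}(K(f))=K(f)$.
\item $f'\neq 0$ on some neighbourhood $V$ of $K(f)$.
\item Since $f(\partial M)\subseteq\partial M$, $f$ maps relatively open subsets of $V$ onto relatively open subsets of $M$. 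This includes one-sided neighbourhoods of an endpoint belonging to $K(f)$.
\item Therefore, for every open $U\subseteq V$, the set $f(U\cap K(f))=f(U)\cap K(f)$ is open in $K(f)$.
\end{enumerate}
So $f|_{K(f)}$ is open and distance-expanding in the adapted metric. Then \cite[Proposition~3.6]{LS26} applies directly, and the patching of Lemma~\ref{A(c)hr} goes through verbatim.

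Your concern is legitimate only for general $C^r(M,M)$, where an endpoint lying in $K(f)$ can be mapped into the interior of $M$. That is precisely why this proposition is stated for $C^r_0(M,M)$. General interval maps are handled afterwards through the endpoint-preserving extension of Lemma~\ref{extension}.
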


\begin{proof}
       Denote by $HC^r_0(M,M)$ the set of hyperbolic maps in $C^r_0(M,M)$. 
       It follows from \cite[Theorem~2.4, Chapter~3]{dMvS93} that $HC^r_0(M,M)$ is an open subset of $C^r_0(M,M)$. 

       For each $g \in HC^r_0(M,M)$, define $L(g)$ to be the union of $K(g)$ and the set of all attracting periodic points of $g$. 
       By definition, we have $g(L(g)) \subseteq L(g)$. 
       Since the forward orbit of every point outside $L(g)$ converges to an attracting periodic orbit, it follows that $\Omega(g) \subseteq L(g)$. 
       Consequently, every $g$-invariant measure is supported on $L(g)$.

       Replacing $\Omega(g)$ by $L(g)$ in Definition~\ref{Fstablyhyperbolic}, it follows from Theorem~\ref{openness}, Remark~\ref{nonwander}~(i), and the openness of $HC^r_0(M,M)$ in $C^r_0(M,M)$, that it suffices to verify that every hyperbolic map $g \in C^r_0(M,M)$ is $HC^r_0(M,M)$-stably hyperbolic.

      Condition~(IS) in Definition~\ref{Fstablyhyperbolic} for each $f \in HC^r_0(M,M)$ follows from \cite[Theorem~2.4, Chapter~3]{dMvS93}. 
      Indeed, this result states that if $f \in C^r_0(M,M)$ is hyperbolic, then for every $g \in C^r_0(M,M)$ sufficiently close to $f$, there exists a conjugacy $h_g \: K(f) \to K(g)$ that converges uniformly to the identity as $g $ tends to $ f$. 
      Moreover, an inspection of the proof of \cite[Theorem~2.4, Chapter~3]{dMvS93} shows that $h_g$ can be extended to a conjugacy $H_g \: L(f) \to L(g)$ that also converges uniformly to the identity as $g$ tends to $f$.

      For conditions (RHE) and (ML) in Definition~\ref{Fstablyhyperbolic}, given $f \in HC^r_0(M,M)$, consider the metric $\abs{\,\cdot\,}_f$ and a constant $\lambda > 1$ such that $\Hseminorm{f}{\mathrm{D}f} > \lambda$ on $K(f)$. 
      The verification then proceeds exactly as in the proofs of Lemmas~\ref{disexpanding}, \ref{A(b)hr}, and~\ref{A(c)hr} if we replace $J(f)$ by $K(f)$.
      Hence for each $f\in HC_0^r(M,M)$, $f$ is $HC^r_0(M,M)$-stably hyperbolic. By Theorem~\ref{openness}, if $\phi\in \Lock(f, \cP)$ is not a constant, then $(f,\phi)$ is in the interior of $\Lock(C^r_0(M,M), \cP)$.
      
\end{proof}

\begin{lemma}\label{inTPO1dim}
	Let $r\in \N$, $\alpha\in (0,1]$, $M$ be a circle or a compact interval, $f\in C^r_0(M,M)$ be a hyperbolic map, and $\cP$ denote $C^1(M,\R)$ or $\Holder{\alpha}(M,\R)$. Then $\Lock(f, \cP)$ is an open dense subset of $\cP$. 
\end{lemma}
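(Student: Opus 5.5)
Openness of $\Lock(f,\cP)$ is immediate from the definition, since the locking property already requires a whole neighbourhood of $\phi$ to have the same unique maximizing measure. So the task is density. The plan is to follow the proof of Proposition~\ref{TPO AxiomA} and Lemma~\ref{indTPOhr}. We work on the subsystem $(L(f),f|_{L(f)})$, where $L(f)=K(f)\cup AP(f)$ is the set from the proof of Proposition~\ref{endpointsp} and $AP(f)$ is the finite set of attracting periodic points. Every $f$-invariant measure is supported on $L(f)$, so for every $\phi\in\cP$,
\[
\Mmax(M,f,\phi)=\Mmax\bigl(L(f),f|_{L(f)},\phi|_{L(f)}\bigr).
\]
We would verify the hypotheses (ACP), (EI), (NLP) of \cite{HLMXZ25} for this subsystem. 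Theorems~1.1 and~1.2 of \cite{HLMXZ25} then give density of locking for the restricted system, for H\"older and $C^1$ potentials respectively.

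The key steps, in order, are as follows.
\begin{enumerate}
\item \textbf{Separation of $L(f)$.} The sets $K(f)$ and $AP(f)$ are disjoint and compact, $AP(f)$ is finite, and both are forward invariant. Hence for $0<\ve<\min\{d(K(f),AP(f)),\Delta(AP(f))\}$, every $\ve$-pseudo orbit lies entirely in $K(f)$ or entirely in one attracting orbit.
\item \textbf{(EI) and (NLP).} These follow from (RHE) and (ML) for $L(f)$. Those conditions were established in the proof of Proposition~\ref{endpointsp} by the arguments of Lemmas~\ref{A(b)hr} and~\ref{A(c)hr}, with $J(f)$ replaced by $K(f)$ and using the adapted metric $\Hseminorm{f}{\,\cdot\,}$ from (\ref{adexpand}). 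We then transfer to the Euclidean metric, which is bi-Lipschitz equivalent, so H\"older classes and the form of the estimates are unchanged up to constants.
\item \textbf{(ACP).} On the attracting orbits (ACP) is trivial. On $K(f)$ we use that, in the adapted metric, $f|_{K(f)}$ is distance-expanding and expansive, and we obtain shadowing of pseudo orbits as in \cite[Proposition~4.2.3]{PU10}.
\item \textbf{Extension to $M$.} For $\cP=\Holder{\alpha}(M,\R)$, we take the small perturbation $\psi$ on $L(f)$ given by \cite[Theorem~1.1]{HLMXZ25} and extend it by McShane \cite[Theorem~1.33]{Wea18} to $\tpsi$ on $M$ with the same norm bound. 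Restricting any small $\zeta$ to $L(f)$ then shows $\phi+\tpsi\in\Lock(f,\cP)$. For $C^1$ we argue the same way, invoking \cite[Theorem~1.2]{HLMXZ25}, whose perturbations are already global $C^1$ functions on the manifold. When $M$ is an interval it has boundary; we handle this by embedding $M$ in a circle or by checking that the $C^1$ construction of \cite{HLMXZ25} only uses local smoothness.
\end{enumerate}

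The main obstacle is (ACP) on $K(f)$. \cite[Proposition~4.2.3]{PU10} requires $f|_{K(f)}$ to be an \emph{open} map of $K(f)$, and for an interval map the hyperbolic set $K(f)$ need not be open-invariant in that sense, since turning points or endpoints may map into $K(f)$ in non-open ways. The first attempt would be to avoid openness: use the inverse branches of $f$ on a neighbourhood of $K(f)$, which exist because $|f'|>\lambda$ there, to build shadowing orbits in $M$. Then, because $K(f)$ is locally maximal (the maximal invariant set near itself, as in \cite[Chapter~3]{dMvS93}), the shadowing orbit stays near $K(f)$ and hence lies in $L(f)$. If local maximality of $K(f)$ is not directly available, we would replace $K(f)$ by the maximal invariant set in a small neighbourhood of it, which is still hyperbolic and still carries all invariant measures outside $AP(f)$.
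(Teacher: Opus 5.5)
Your overall plan matches the paper's. You restrict to the subsystem carrying all invariant measures and verify (ACP), (EI), (NLP) of \cite{HLMXZ25}. The last two come from (RHE) and (ML) as in Proposition~\ref{endpointsp}, and (ACP) is checked separately on the attracting cycles and on $K(f)$ after separating pseudo-orbits as in Lemma~\ref{indTPOhr}. You then extend perturbations to $M$ as in Proposition~\ref{TPO AxiomA}.

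The only divergence is (ACP) on $K(f)$, and the obstacle you flag there does not arise. The paper shows directly that $f|_{K(f)}$ is open, in two steps:
\begin{enumerate}
\item[(a)] Since $f'\neq 0$ on a neighbourhood $V$ of $K(f)$, the map $f|_V$ is open. Endpoints cause no trouble, because $f(\partial M)\subseteq\partial M$.
\item[(b)] $f^{-1}(K(f))=K(f)$. If $x\notin K(f)$ had $f(x)\in K(f)$, the orbit of $x$ would converge to an attracting cycle. But $\abs{(f^n)'(f(x))}\ge C\theta^n$ for all $n$ is incompatible with the orbit of $f(x)$ converging to an attracting cycle.
\end{enumerate}
Hence $f(W\cap K(f))=f(W)\cap K(f)$ for open $W\subseteq V$, and \cite[Proposition~4.2.3]{PU10} applies verbatim.

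In particular, your claim that turning points may map into $K(f)$ is false under the paper's definition of a hyperbolic map. A critical point lies outside $K(f)$, so its orbit converges to an attracting cycle and can never enter the forward-invariant expanding set $K(f)$.

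Your workaround (inverse branches near $K(f)$, with local maximality putting the closing orbit back into $K(f)$) is still sound. The local maximality you need holds for the same reason. If $N$ is a neighbourhood of $K(f)$ disjoint from the attracting cycles, a point whose forward orbit stays in $N$ cannot lie in an attracting basin, so $\bigcap_{n\ge 0}f^{-n}(N)=K(f)$. Your fallback of replacing $K(f)$ by a larger maximal invariant set is therefore unnecessary.

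Forward local maximality together with $f$ being a local diffeomorphism near $K(f)$ immediately gives openness of $f|_{K(f)}$. So both routes rest on the same fact. The paper's is shorter because it cites the closing result for open distance-expanding maps instead of rebuilding it from inverse branches.
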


\begin{proof}
    Arguing as in Lemma~\ref{disexpanding}, by equipping $M$ with the metric $\Hseminorm{f}{\, \cdot \, }$, it can be shown that $f|_{K(f)}$ is distance-expanding. 
    Moreover, $\Hseminorm{f}{\mathrm{D}f}$ does not vanish on a neighbourhood $V$ of $K(f)$, which implies that $f|_V$ is an open map. 
    Since the forward orbit of every point outside $K(f)$ converges to an attracting periodic orbit, it follows that $f^{-1}(K(f)) = K(f)$. 
    Hence, $f|_{K(f)}$ is also an open map. 
    
    Using an argument analogous to that in Lemma~\ref{indTPOhr}, one verifies that $f$ satisfies the (ACP) property in \cite{HLMXZ25}. 
    Furthermore, conditions (RHE) and (ML) in Definition~\ref{Fstablyhyperbolic} established in Proposition~\ref{endpointsp} imply that $f$ also satisfies the properties (EI) and (NLP) of \cite{HLMXZ25}. 
    The conclusion then follows by an argument analogous to that of Proposition~\ref{TPO AxiomA}.
\end{proof}

Applying Proposition~\ref{endpointsp} and Lemma~\ref{inTPO1dim} to Theorem~\ref{1dim} requires a transition from $C^r(M,M)$ to the subclass $C^r_0(M,M)$. 
We facilitate this by developing a method to extend general interval maps to endpoint-preserving ones without altering their fundamental dynamical properties. 
The details of this construction follow.

\begin{lemma}\label{extension}
	Let $M=[a,b]$ be an interval, $r\in \N$, and $f\in C^r(M,M)$. 
    Assume that $\{f(a), \, f(b)\} \subseteq (a,b)$ and $f'(a) \neq 0 \neq f'(b)$. 
    Denote $\tau \= \frac{1}{2}\min\{f(a)-a,\, b-f(b)\}>0$. Then there exists $a_0<a$, $b_0>b$, and $F\in C^r_0([a_0,b_0],[a_0,b_0])$ satisfying the following properties:
	\begin{enumerate}[label=\rm{(\roman*)}]
		\smallskip
		\item $\{F(a_0),\, F(b_0) \} \subseteq \{a_0, \, b_0\}$ and $F|_{M} = f$.
		
		\smallskip
		\item For every $x\in [a_0,a] \cup [b,b_0]$, $F'(x) \neq 0$.
			
		\smallskip
		\item There exist $\theta>1$, $a_1 \in (a_0,a)$, and $b_1 \in (b,b_0)$ such that $F([a_0,a_1) \cup (b_1,b_0]) \cap [a+\tau,b-\tau] =\emptyset$ and $F( [a_1,a]\cup [b,b_1] ) \subseteq [a+\tau,b-\tau]$, and $\abs{F'} > \theta $ on $[a_0,a_1) \cup (b_1,b_0]$.
	\end{enumerate}
\end{lemma}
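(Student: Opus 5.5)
The plan is to build $F$ separately on $[a_0,a]$ and on $[b,b_0]$. I describe the left end; the right end is symmetric, with $a$ replaced by $b$ and orientations reversed. The idea is to prescribe $F'$ on the left extension as a $C^{r-1}$ function $G$ that has constant sign, matches the jet of $f'$ at $a$, and becomes a large constant a short distance to the left of $a$. Then we set $F(x)\= f(a)-\int_x^a G$. This makes $F$ $C^r$ across $a$, strictly monotone (so (ii) holds), and very steep far from $a$ (so (iii) holds), while a single free parameter lets the endpoint value be hit exactly (so (i) holds).

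\emph{Step 1 (local jet).} Let $\sigma\=\operatorname{sign} f'(a)\neq 0$, and let $P(x)\=\sum_{k=0}^{r} f^{(k)}(a)(x-a)^k/k!$, using one-sided derivatives at $a$. Choose $\ve>0$ so small that $\sigma P'>0$ on $[a-\ve,a]$, and so that $\abs{P-f(a)}<\tau/4$ there. Fix a $C^\infty$ cutoff $\beta$ with $\beta=1$ on $[a-\ve',a]$ and $\beta=0$ on $(-\infty,a-2\ve']$, where $2\ve'<\ve$ will be fixed below. For a constant $A>0$ set
\begin{equation*}
G\=\beta P'+(1-\beta)\,\sigma A .
\end{equation*}
This is a convex combination of two quantities of sign $\sigma$, so $\sigma G>0$ everywhere. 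Near $a$ we have $G=P'$, so all derivatives of $F$ up to order $r$ agree with those of $f$ at $a$, and the glued map is $C^r$.

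\emph{Step 2 (target endpoints).} Choose the endpoints as $a_0\=a-\ve$ and $b_0\=b+\ve$, with the same $\ve$ used at both ends. The target value is $F(a_0)=a_0$ if $\sigma>0$ and $F(a_0)=b_0$ if $\sigma<0$; in either case $F$ must move from $f(a)$ (which lies in $[a+2\tau,b-2\tau]$) monotonically out to $a_0$ or $b_0$. Since $F(a_0)$ depends affinely and strictly monotonically on $A$, with coefficient of size about $\ve-2\ve'$, there is a unique $A$ achieving the target. That $A$ is of order $(b-a)/\ve$, hence $A>2$ once $\ve$ is small.

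Next fix $\ve'$ so small that, on the transition interval $[a-2\ve',a]$, the total variation $\int\abs{G}\le (\sup\abs{P'}+A)\,2\ve'$ is below $\tau/4$. The dependence of $A$ on $\ve'$ is only mild (through the length $\ve-2\ve'$), so a consistent choice exists, for instance by letting $\ve'\to0$ with $\ve$ fixed and using continuity in $\ve'$.

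\emph{Step 3 (verification of (iii)).} On $[a-2\ve',a]$ we have $\abs{F-f(a)}<\tau/2$, so $F$ takes values in $[a+\tau,b-\tau]$ there. On $[a_0,a-2\ve']$, $F$ is affine with slope $\sigma A$, and it runs monotonically from a value within $\tau/2$ of $f(a)$ to $a_0$ or $b_0$. It therefore crosses the relevant level ($a+\tau$ if $\sigma>0$, $b-\tau$ if $\sigma<0$) exactly once, at a point $a_1\in(a_0,a-2\ve')$. By monotonicity, $F([a_1,a])\subseteq[a+\tau,b-\tau]$ and $F([a_0,a_1))$ misses this interval. On $[a_0,a_1)$ we have $\abs{F'}=A>\theta\=2$. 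Doing the same at $b$ and taking $\theta$ to be the minimum of the two constants completes the construction. Finally, $F$ maps $[a_0,b_0]$ into itself: on $M$ because $f(M)\subseteq M$, and on the extensions by monotonicity between $f(a)$ (resp.\ $f(b)$) and the endpoint targets.

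The main obstacle is the simultaneous bookkeeping in Steps 1–2: the constant $A$, which is forced by exactly hitting the endpoint, must be compatible with the smallness of the transition region needed to keep $F$ inside $[a+\tau,b-\tau]$ near $a$. I expect to handle this by fixing $\ve$ first, then treating $A=A(\ve')$ as a continuous function of $\ve'$ that stays bounded as $\ve'\to0$, and choosing $\ve'$ last.
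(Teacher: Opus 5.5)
Your construction is correct, and it takes a genuinely different route from the paper.

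\textbf{The paper's route.} The paper extends $f$ past $a$ and $b$ by the polynomials $g_k$, $h_k$: the degree-$r$ Taylor jet of $f$ at $a$ (resp.\ $b$), plus $k/(r+1)!$ times $(a-x)^{r+1}$ (resp.\ $(x-b)^{r+1}$). It chooses $k$ so that $\abs{F'}>2$ beyond the point $a_1$ (resp.\ $b_1$) where the level $a+\tau$ or $b-\tau$ is reached. Only afterwards does it locate $a_0,b_0$, by solving $g(a_0)\in\{a_0,b_0\}$ and $h(b_0)\in\{a_0,b_0\}$. This forces a four-case analysis on the signs of $f'(a),f'(b)$. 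When both are negative, the conditions $g(a_0)=b_0$ and $h(b_0)=a_0$ are coupled, and the paper solves them by minimizing $\abs{g(x)-y}+\abs{h(y)-x}$ over a rectangle.

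\textbf{Your route.} You reverse the order. You fix $a_0=a-\ve$, $b_0=b+\ve$ first and let the slope $A$ of the affine outer piece be the free parameter. Each endpoint condition then becomes a single affine equation in one unknown on its own side. So all sign cases are treated uniformly, and the coupled two-dimensional step disappears. The exactly constant slope on $[a_0,a-2\ve']$ also makes (iii) immediate.

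The price is a cutoff function and the $\ve,\ve',A$ bookkeeping, which you resolve correctly. For $\ve'<\ve/4$ the solution $A(\ve')$ is bounded above. It exceeds $2$ once $2\ve'\sup\abs{P'}$ is small and $\ve$ is small relative to $\tau$. Hence $2\ve'(\sup\abs{P'}+A)\to0$, as you need. The paper's explicit polynomial family, for its part, is reused directly in Lemma~\ref{exnormbound}.

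\textbf{A caveat shared with the paper.} You assert that $f(a)\in[a+2\tau,b-2\tau]$. The stated $\tau=\frac12\min\{f(a)-a,\,b-f(b)\}$ only gives $f(a)\ge a+2\tau$ and $f(b)\le b-2\tau$. In fact the statement is false as written. Take $M=[0,1]$ and $f(x)=0.99-0.49x$. Then $\tau=1/4$, while (iii) together with (i) forces $F(a)=f(0)=0.99\in[1/4,3/4]$. The paper's proof contains the same slip (``then $f(b)\in(a+\tau,b-\tau)$'').

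The remedy is to set $\tau\=\frac12\min\{f(a)-a,\,b-f(a),\,f(b)-a,\,b-f(b)\}$. The later use in Lemma~\ref{extensionprop} only needs some $\tau>0$ satisfying (iii), and with this $\tau$ your argument goes through verbatim.
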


\begin{lemma}\label{exnormbound}
	Let the intervals $M=[a,b]$ and $M_0 = [a_0,b_0]$ be such that $a_0< a<b<b_0$. Let $r\in \N$. Then there exists a constant $D_r\ge 1$ such that for every $f\in C^r(M,\R)$, there exists a $C^r$ extension $F$ of $f$ to $M_0$ with $F(a_0)=F(b_0)=0$ and $\Hnorm{C^r,M_0}{F} \le D_r \Hnorm{C^r, M}{f}$.
\end{lemma}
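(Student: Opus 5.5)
The plan is an explicit linear extension operator, built from a one-sided Taylor extension at each endpoint and cut off by a fixed smooth bump. Since $M$, $M_0$ and $r$ are fixed, every constant produced will depend only on $a,b,a_0,b_0,r$. That gives a bound $\Hnorm{C^r,M_0}{F}\le D_r\Hnorm{C^r,M}{f}$, where $\Hnorm{C^r}{f}\=\sum_{k=0}^r\Hnorm{\infty}{f^{(k)}}$.

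First fix a $C^\infty$ cutoff $\chi_a\:[a_0,a]\to[0,1]$ with $\chi_a\equiv 1$ near $a$ and $\chi_a\equiv 0$ near $a_0$. Choose $\chi_b$ on $[b,b_0]$ analogously. These functions depend only on the four points, so their $C^r$ norms are fixed constants.

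Next, on $[a_0,a]$ define
\begin{equation*}
F(x)\=\chi_a(x)\sum_{k=0}^{r}\frac{f^{(k)}(a)}{k!}(x-a)^k .
\end{equation*}
Define $F$ on $[b,b_0]$ the same way using $\chi_b$ and the one-sided derivatives of $f$ at $b$. On $M$ set $F\=f$.

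Then check regularity at the junctions. Near $a$ we have $\chi_a\equiv1$, so the $k$-th derivative of $F$ from the left at $a$ equals $f^{(k)}(a)$ for $0\le k\le r$. This matches the one-sided derivatives of $f$ from the right, so $F$ is $C^r$ across $a$, and likewise across $b$. Near $a_0$ and $b_0$ we have $F\equiv0$, so $F(a_0)=F(b_0)=0$ and $F$ is smooth there.

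Finally, the norm bound. On $[a_0,a]$, the Leibniz rule gives
\begin{equation*}
\Hnorm{\infty}{F^{(j)}}\le \sum_{i\le j}\binom{j}{i}\Hnorm{\infty}{\chi_a^{(j-i)}}\sum_{k=0}^r\frac{\abs{f^{(k)}(a)}}{k!}\,\max_{x\in[a_0,a]}\Absbig{\tfrac{\mathrm{d}^i}{\mathrm{d}x^i}(x-a)^k}.
\end{equation*}
Each factor other than $\abs{f^{(k)}(a)}\le\Hnorm{C^r,M}{f}$ is a constant depending only on $r$ and $a-a_0$. The same estimate holds on $[b,b_0]$. Summing over $j\le r$ and taking the maximum with the trivial bound on $M$ produces $D_r$, and we may enlarge it so that $D_r\ge1$.

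There is no real obstacle here. The only points needing care are that $C^r$ on the closed interval $M$ means one-sided derivatives at the endpoints, which makes the matching argument valid, and that $D_r$ is independent of $f$ because the operator $f\mapsto F$ is linear and built from fixed auxiliary functions.
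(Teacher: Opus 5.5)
Your argument is correct, and it takes a different route from the paper.

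The paper uses no cutoff function. On $[a_0,a]$ it takes the degree-$(r+1)$ polynomial $g_k(x)=\sum_{i=0}^{r}\frac{f^{(i)}(a)}{i!}(x-a)^i+\frac{(-1)^{r+1}k}{(r+1)!}(x-a)^{r+1}$, the same family used in the proof of Lemma~\ref{extension}. It then tunes the single free coefficient $k$ so that $g_k(a_0)=0$. An explicit $k_-$, proportional to $\Hnorm{C^r,M}{f}$ with a factor depending only on $r$ and $a-a_0$, satisfies $g_{k_-}(a_0)\ge 0\ge g_{-k_-}(a_0)$. The intermediate value theorem then gives a root $k_1\in[-k_-,k_-]$. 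The $C^r$ bound is read off by differentiating this polynomial term by term, and the construction at $b$ is symmetric.

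Since $g_k(a_0)$ is affine in $k$ with slope $(a-a_0)^{r+1}/(r+1)!>0$, that root is unique and linear in $f$. So the paper's extension is also a linear operator, just written less transparently.

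The paper's version buys economy: the extension is a polynomial on each side, no auxiliary bump functions are needed, and it reuses the $g_k,h_k$ machinery already set up for Lemma~\ref{extension}. Your version makes linearity and the $f$-independence of $D_r$ immediate and needs no root-finding. It also gives the stronger conclusion that $F$ vanishes identically near $a_0$ and $b_0$, at the modest cost of fixing cutoffs $\chi_a,\chi_b$ whose $C^r$ norms enter $D_r$.

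Either version suffices for the proof of Theorem~\ref{1dim}, which uses only $F(a_0)=F(b_0)=0$ together with the linear norm bound.
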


The proofs of Lemmas~\ref{extension} and~\ref{exnormbound} rely on some auxiliary technical arguments unconnected to the dynamical ideas used in the main proof of Theorem~\ref{1dim}, so are relegated to Appendix~\ref{A}.

\begin{lemma}\label{extensionprop} 
    Let $M=[a,b]$ be an interval, $r\in \N$, and $f\in C^r(M,M)$, and
    let $M_0 \= [a_0,b_0]$ and $F$ be 
    as in Lemma~\ref{extension}.
    Then $F$ satisfies the following properties:
	\begin{enumerate}[label=\rm{(\roman*)}]
		\smallskip
		\item There exists $\kappa>0$ such that for each $H \in C^r(M_0,\R)$ with $H(a_0)=H(b_0) =0$ and $\Hnorm{C^r,M_0}{H}< \kappa$, we have $F+H\in C_0^r(M_0,M_0)$.

		\smallskip
		\item There exists a neighbourhood $W \subseteq C^r_0(M_0,M_0)$ of $F$ such that for every $G \in W$ with $G(M) \subseteq M$, every $G$-invariant measure is supported on $M \cup \{a_0, \, b_0\}$.
		
		\smallskip
		\item $F$ is hyperbolic.

		\smallskip
		\item If $W$ is the neighbourhood of $F$ as in (ii), then for every $G\in W$ with $G(M)\subseteq M$, if $\phi \in C(M_0)$ is such that $\phi(a_0) < \mpe(G,\phi)$ and $\phi(b_0)< \mpe(G,\phi)$, then every $(G,\phi)$-maximizing measure has support contained in $M$. 
	\end{enumerate}
\end{lemma}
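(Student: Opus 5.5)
Throughout, I take $f$ to be hyperbolic, with $\{f(a),f(b)\}\subseteq(a,b)$ and $f'(a)\neq 0\neq f'(b)$, as required by Lemma~\ref{extension}. The plan is to prove (i)--(iv) in order, using only the properties (i)--(iii) of $F$ from Lemma~\ref{extension}.

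For (i), the point is that $F$ maps $M_0$ into $M_0$ with room to spare in the interior: $F([a_0,b_0])$ meets $\partial M_0$ only at the endpoints. Indeed, $F(M)=f(M)\subseteq M$, and on the wings $F'\neq 0$, so $F$ is strictly monotone there. Since $F(a_0),F(b_0)\in\{a_0,b_0\}$ and $F(a),F(b)\in(a,b)$, the image of $(a_0,a]$ and $[b,b_0)$ lies in the open interval $(a_0,b_0)$, and on compact subsets away from $\partial M_0$ it stays a positive distance from $\partial M_0$. Near the endpoints, say $x$ close to $a_0$ with $F(a_0)=a_0$, I use the derivative. Put $G=F+H$; then $G(a_0)=a_0$, and $G'(x)$ is close to $F'(x)$, whose sign is fixed on $[a_0,a_1]$ and points inward. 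Thus $G$ maps a small neighbourhood of $a_0$ into $M_0$ by the mean value theorem. The same argument works if $F(a_0)=b_0$, and at $b_0$. Choosing $\kappa$ smaller than the relevant margin and derivative bound gives $G(M_0)\subseteq M_0$ and $G(\partial M_0)\subseteq\partial M_0$.

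For (ii), I use property (iii) of Lemma~\ref{extension}: on $[a_0,a_1)\cup(b_1,b_0]$ we have $|F'|>\theta>1$, and these wings are mapped outside $[a+\tau,b-\tau]$. Meanwhile $[a_1,a]\cup[b,b_1]$ is mapped into $[a+\tau,b-\tau]\subseteq M$. For $G$ $C^1$-close to $F$, the same hold with $\theta'>1$ and $\tau/2$. Now take $x\in M_0\smallsetminus(M\cup\{a_0,b_0\})$; I claim $x$ is not recurrent in a measure sense. Its orbit either enters $M$ (after which it stays, since $G(M)\subseteq M$), or remains in the wings forever. Expansion with $|G'|>\theta'$ near the endpoints, which are fixed or exchanged, forces an orbit staying forever in the wings to lie on $\partial M_0$. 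So the points of $M_0\smallsetminus(M\cup\partial M_0)$ are wandering, or at least visited at most once, and by Poincaré recurrence every invariant measure gives them zero mass. This is the step needing most care: I must check that orbits starting in $(a_0,a_1)$ cannot return, using monotonicity together with expansion away from the fixed or periodic endpoint.

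For (iii), hyperbolicity of $F$ means $K(F)\=K(f)\cup\partial M_0$. This set is hyperbolic, since $\partial M_0$ is a repelling fixed point or period-2 orbit by $|F'|>\theta$. The attracting orbits of $F$ are those of $f$. Every other point either eventually lands in $M$, and then converges to an attracting orbit of $f$ or lies in $K(f)$, or lies on $\partial M_0$; points landing in $K(f)$ I include in $K(F)$ via preimages, as in the definition. For (iv), write any maximizing $\mu$ as $\mu=c\,\mu_M+c_0\,\mu_{\partial}$ using (ii). The measure $\mu_\partial$ is supported on the invariant set $\partial M_0$ and has integral at most $\max\{\phi(a_0),\phi(b_0)\}<\mpe(G,\phi)$. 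Hence $c_0>0$ would force $\int\phi\,\mathrm d\mu<\mpe(G,\phi)$ (note $\mu_M$ gives at most $\mpe$). So $c_0=0$.
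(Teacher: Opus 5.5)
Your parts (i), (ii) and (iv) follow the paper's proof. For (i), the paper uses the explicit margin $F([a_1,b_1])\subseteq M$ with $\kappa=\min\{\theta/2,\,a-a_0,\,b_0-b\}$, where you use a compactness margin; this is the same argument. For (iv) you use the same convex decomposition. In (ii), the step you defer is exactly the paper's argument, and it does go through. Since $G$ is monotone on $[a_0,a_1]$ with $G(a_0)\in\partial M_0$ and $G(a_1)\in M$, a point $y\in(a_0,a_1)$ with $G(y)\notin[a_1,b_1]$ lands in the outer wing adjacent to $G(a_0)$. It lies at distance more than $\theta'(y-a_0)$ from that endpoint. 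So the distance to the relevant point of $\partial M_0$ grows geometrically along any orbit that never enters $M$, which is impossible. The paper phrases this via the lengths of $G^n([a_0,x])$.

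There are two harmless inaccuracies. First, in Cases 2--3 of Lemma~\ref{extension} one endpoint is fixed and the other is mapped onto it, so $\partial M_0$ need not be a fixed point or a $2$-cycle. Second, an orbit may pass through the wings several times before entering $M$, so the correct statement is ``finitely often'', not ``at most once''.

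The genuine gap is in (iii). Your first choice $K(F)=K(f)\cup\partial M_0$ fails: points of $(a_0,a)\cup(b,b_0)$ that are mapped into $K(f)$ lie outside it, yet do not converge to an attracting orbit. Once you enlarge it to $K_0=\partial M_0\cup\bigcup_{i\ge0}F^{-i}(K(f))$, your justification (``$\partial M_0$ is repelling'') no longer covers the set. Proving that $K_0$ is a hyperbolic set is exactly the content of (iii), and it needs three checks.

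First, you need $M\cap K_0=K(f)$, i.e.\ $f^{-n}(K(f))\subseteq K(f)$. This holds because a point of $M\smallsetminus K(f)$ converges to an attracting orbit and so cannot land on the expanding set.

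Second, you need uniform expansion on the wing preimages. Take $x$ in the wings with first entry $F^k(x)\in M$; then $F^k(x)\in K(f)$ by the first check. For $i\le k-2$ we have $F^i(x)\in[a_0,a_1)\cup(b_1,b_0]$, because $F([a_1,b_1])\subseteq M$, so $|F'|>\theta$ there. The single remaining step at $F^{k-1}(x)$ is controlled only by $s=\min_{[a_0,a]\cup[b,b_0]}|F'|>0$. This is where Lemma~\ref{extension}~(ii) is indispensable: a critical point of $F$ in $[a_1,a]$ mapped into $K(f)$ would destroy expansion. Together these give $|(F^m)'(x)|\ge c\,\min\{\theta,\lambda\}^m$, with $c$ depending only on $s$, $\theta$ and the hyperbolicity constants of $K(f)$.

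Third, you need $K_0$ to be compact. The paper shows that preimages of increasing depth accumulate only at $\partial M_0$, using the wing expansion. Alternatively, a limit point of $K_0$ inherits the bound from the second check by continuity of $(F^n)'$, so it cannot lie in the basin of an attracting orbit, and hence lies in $K_0$.
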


\begin{proof}
	 Our proof will use the properties of $F$ from Lemma~\ref{extension}, where the constants $\tau$ and $\theta$ are as in that lemma.
	
	\smallskip
	(i) Define
    $        \kappa \= \min \{ \theta/2, \, a-a_0 , \, b_0-b \}$.
    Clearly, $\kappa>0$. Let $H\in C^r(M_0,\R)$ satisfy $H(a_0)=H(b_0)=0$ and $\Hnorm{C^r,M_0}{H}< \kappa$. 
    Set $\tF \= F+H$. Then 
    $        \bigl\{\tF(a_0),\, \tF(b_0)\bigr\} \subseteq \{a_0, \,b_0\}$.
    Lemma~\ref{extension}~(iii) implies that $a \le F \le b$ on $[a_1,b_1]$. 
    Hence
    \begin{equation*}
        a_0 \le a-\kappa < \tF < b+\kappa \le b_0\quad \text{ on } [a_1,b_1],
    \end{equation*} which yields $\tF([a_1,b_1]) \subseteq M_0$. 
    Moreover, Lemma~\ref{extension}~(iii) also gives $ \Absbig{\tF'}\geq \abs{F'}-\abs{H'}>\theta-\kappa\geq \theta/2> 0$ on $[a_0,a_1)$, so $\tF$ is monotonic. 
    Since $\tF(a_0)=F(a_0) \in M_0$ and $\tF(a_1) \in M \subseteq M_0$, then $\tF([a_0,a_1)) \subseteq M_0$. Similarly, $\tF((b_1,b_0]) \subseteq M_0$. 
    
    In conclusion, $\tF\in C^r(M_0,\R)$, 
    $\bigl\{\tF(a_0),\, \tF(b_0)\bigr\} \subseteq \{a_0, \,b_0\},\, \tF([a_0,b_0]) \subseteq [a_0,b_0]$,
    and therefore $\tF \in C^r_0(M_0,M_0)$.
	
	\smallskip
	(ii) Define 
    $\eta \= \min \bigl\{ \tau, \, 2^{-1}(\theta-1) \bigr\}\text{ and } W \= B_{C_0^r(M_0,M_0)} (F,\eta)$,
     where $B_{C_0^r(M_0,M_0)}(F,\eta)$ denotes the set of functions $G \in C_0^r(M_0,M_0)$ with $\Hnorm{C^r,M_0}{G-F} < \eta$. 
	
	Fix $G\in W$ with $G(M) \subseteq M$, and set $M_1 \= M \cup \{a_0, \, b_0\}$. 
   Since $G\in C_0^r(M_0,M_0)$ and $G(M)\subseteq M$, we have $G(M_1)\subseteq M_1$. 
To show that every $G$-invariant probability measure is supported on $M_1$, it suffices to show that for every $x \in M_0 \smallsetminus M_1 = (a_0,a) \cup (b,b_0)$, there exists $n \in \N$ such that $G^n(x) \in M_1$. Indeed, if this holds, the forward invariance $G(M_1) \subseteq M_1$ ensures that the forward orbit of $x$ never returns to the open set $M_0 \smallsetminus M_1$, meaning no point in $M_0 \smallsetminus M_1$ is recurrent. By the Poincar\'{e} recurrence theorem, any $G$-invariant measure must assign zero mass to $M_0 \smallsetminus M_1$, and therefore 
has support contained in $M_1$.

    We now prove this forward orbit property.
    First consider the case where $x\in (a_0,a) $. 
    Suppose for a contradiction that for every $n\in \N$, $G^n(x) \notin M_1$. 
    By Lemma~\ref{extension}~(iii), $F([a_1,a]\cup [b,b_1]) \subseteq [a+\tau,b-\tau]$. 
    Since $\Hnorm{\infty,M_0}{G-F}  <\eta \le \tau$, it follows that $G([a_1,a] \cup [b,b_1]) \subseteq M$. 
    Hence $G^n(x) \notin [a_1, b_1]$ for all $n\in \N$.
	
	Now since $\Hnorm{\infty,M_0}{F'-G'}< \eta \le 2^{-1}(\theta-1)$ and $\abs{F'}>\theta$ on $(a_0,a_1) \cup (b_1,b_0)$, again by Lemma~\ref{extension}~(iii), we obtain that
    \begin{equation*}
        \abs{G'}\geq  \abs{F'}-\Hnorm{\infty,M_0}{F'-G'} >2^{-1}(\theta+1)>1\quad \text{ on }(a_0,a_1) \cup (b_1, b_0).
    \end{equation*} 
    So $G$ is monotone on both $(a_0,a_1)$ and $(b_1,b_0)$. As we have shown that $G(a_1) , \, G(b_1) \in M$, we have
	\begin{equation}\label{inc}
		\begin{aligned}
			\text{ either }\quad G([a_0,a_1]) &\subseteq [a_0,b] \quad\text{ or }\quad G([a_0,a_1]) \subseteq [a,b_0], \\
			\text{ either }\quad G([b_1,b_0]) &\subseteq [a_0,b] \quad\text{ or }\quad G([b_1,b_0]) \subseteq [a,b_0].
		\end{aligned}
	\end{equation}
	Since $\abs{G'} > 2^{-1}(1+\theta)$ on $[a_0,x]$ and $G(x) \notin [a_1,b_1]$, the image $G([a_0,x])$ is a closed interval with 
    one endpoint in $\{a_0, b_0\}$ and length
    $\abs{ G([a_0,x]) } > 2^{-1}(1+\theta) (x-a_0)$.
    Thus, because $G([a_0, a_1])$ cannot cover both $(a_0, a_1)$ and $(b_1, b_0)$ simultaneously without violating \eqref{inc}, the fact that $G(x) \notin [a_1, b_1]$ forces $G([a_0,x])$ to be contained entirely in $[a_0,a_1)$ or entirely in $(b_1,b_0]$.
    In particular, $\abs{G'} > 2^{-1} (1+\theta)$ on all of $G([a_0,x])$. 
    Using (\ref{inc}) and the fact that $G^n(x) \notin [a_1,b_1]$ for all $n\in \N$, we may iterate the arguments above to obtain 
    \begin{equation*}
        \abs{ G^n([a_0,x]) } > 2^{-n} (1+\theta)^{n} (x-a_0) \quad\text{ for all }n\in \N.
    \end{equation*} 
    However, $\lim_{n\to +\infty} 2^{-n} (1+\theta)^{n} (x-a_0) = +\infty$, which is impossible since $G^n([a_0,x])\subseteq [a_0,b_0]$. 
    This contradiction proves that there exists $n\in \N$ such that $G^n(x)\in M_1$.
    
    The proof in the case that $x\in (b,b_0)$ is
    entirely analogous to the one above for the case that $x\in(a_0,a)$, and will be omitted. Therefore, (ii) is proved.
	
	\smallskip
	(iii) Set
    $K_0 \= \{a_0, \, b_0\} \cup  \bigcup_{i=0}^{+\infty} F^{-i}(K(f))$.
    We next show that $F$ is hyperbolic for $K(F) = K_0$.
	
	Since $F(\{a_0, \, b_0\}) \subseteq \{a_0, \, b_0\}$, by the definition of $K_0$ we have $F(K_0) \subseteq K_0$. 
    Fix $x\in [a_0,b_0] \smallsetminus K_0$. 
    
    If $x\in M$, then $x\in M \smallsetminus K(f)$. 
    By the hyperbolicity of $f$, the orbit $\bigl\{F^i(x)\bigr\}_{i\in \N}$ converges to an attracting periodic orbit of $f$, which is also an attracting periodic orbit of $F$. 
    
    If $x \notin M$, then $x\in (a_0,a)\cup (b,b_0)$. 
    By (ii), there exists $n\in \N$ such that $F^n(x) \in M$. 
    Since $x\notin K_0$, we have $x\notin F^{-n}(K(f))$, and hence $F^n(x) \in M \smallsetminus K(f)$. 
    Consequently, the orbit $\bigl\{F^{n+i}(x)\bigr\}_{i\in \N}$ converges to an attracting periodic orbit of $F$. 
    In conclusion, for every point $x\in M_0 \smallsetminus K_0$, the orbit $\bigl\{F^i(x)\bigr\}_{i\in \N}$ converges to an attracting periodic orbit of $F$.

	Next, we prove that $F$ is expanding on $K_0$. 
    By the hyperbolicity of $f$, there exist constants $C>0$ and $\lambda>1$ such that $\abs{ (f^m)'(y) } \ge C \lambda^m$ for all $y \in K(f)$ and $m\in \N$. 
    We claim that $f^{-n}(K(f)) \subseteq K(f)$ for all $n\in \N$. 
    Otherwise, there would exist $n\in \N$ and $x\in f^{-n}(K(f))\smallsetminus K(f)$. 
    Then $\bigl\{f^i(x)\bigr\}_{i\in \N}$ converges to an attracting periodic orbit of $f$, which contradicts the fact that $\abs{ (f^m)' (f^n(x)) } \ge C \lambda^m$ for every $m\in \N$. Using this, we obtain $M \cap K_0 = K(f)$. Indeed, clearly, $K(f) \subseteq M\cap K_0$. 
    Conversely, if $y\in K_0 \cap M$, then there exists $n\in \N_0$ such that $f^n(y)=F^n(y) \in K(f)$. 
    Since $f^{-n}(K(f)) \subseteq K(f)$, we conclude that $y\in K(f)$. 
	
	Fix $x\in K_0$. 
    If $x\in \{a_0, \, b_0\}$, by Lemma~\ref{extension}~(i)(iii), we have
    $F(\{a_0, \, b_0\}) \subseteq \{a_0, \, b_0\}$, $\abs{F'(a_0)} > \theta$, and $\abs{F'(b_0) }> \theta$. 
    Thus, for every $m\in \N$, we have $\abs{ (F^m)' (a_0)} > \theta^m$ and $\abs{ (F^m)' (b_0)} > \theta^m$. 
    If $x\in M$, then $x\in K(f)$, and hence $\abs{ (F^m)'(x) } \ge C \lambda^m$ for every $m \in \N$.
	If $x\in (a_0,a)\cup (b,b_0)$, by (ii) there exists $k\in \N$ such that $F^k(x) \in M$ and $F^{k-1}(x) \notin M$. 
    Then, $F^k(x) \in K(f)$. 
    By Lemma~\ref{extension}~(iii), we have $F([a_1,b_1]) \subseteq M$, and hence $F^i(x) \notin [a_1,b_1]$ for all $i\in \{0,\,\dots,\,k-2\}$. 
    So Lemma~\ref{extension}~(iii) implies that for all $i\in \{0,\,\dots,\,k-2\}$, $\Absbig{F'\bigl(F^i(x)\bigr)} > \theta$. 
    Moreover by Lemma~\ref{extension}~(ii), the constant $s \= \min \{ \abs{F'(y)} : y\in [a_0,a] \cup [b,b_0] \} > 0$ is well defined. 
    Thus, $\abs{(F^m)'(x)} \ge s\theta^{m-1}$ if $1 \le m \le k$ and $\abs{(F^m)'(x)} \ge C s \theta^{k-1} \lambda^{m-k}$ if $m\ge k+1$. 
    Consequently, in all cases, we have $\abs{ (F^m)'(x) } \ge \min  \{ 1, \, C , \, s/\theta, \, Cs/\theta \} \cdot \min\{\theta, \, \lambda\}^m $ for every $m\in \N$. 
    Hence, $F$ is expanding on $K_0$.

	It remains to prove that $K_0$ is compact. 
    Since $F^{-i}(K(f))$ is compact for every $i\in \N_0$, it suffices to show that for any sequence $\{x_i\}_{i\in \N_0}$ satisfying $x_i\in F^{-i}(K(f)) \smallsetminus \bigcup_{j=0}^{i-1} F^{-j}(K(f))$ for each $i\in \N_0$, every limit point of $\{x_i\}_{i\in \N_0}$ belongs to $K_0$. 
    Let $\{x_i\}_{i\in \N_0}$ be such a sequence and let $x_*$ be a limit point. 
    We claim that $x_*\in \{a_0,\, b_0\}$. 
    
    Fix $i\in \N$ with $i\ge 2$. Then $F^i(x_i) \in K(f)$, but $F^j(x_i) \notin K(f) $ if $j\in \{0,\,\dots,\, i-1\}$. 
    Since $f^{-n}(K(f)) \subseteq K(f)$ for all $n\in \N$, it follows that $F^j(x_i) \notin M  $ if $j\in \{0,\,\dots,\, i-1\}$. 
    Moreover since Lemma~\ref{extension}~(iii) implies $F([a_1,b_1]) \subseteq M$, we obtain $F^j(x_i) \notin [a_1,b_1]$ if $0\le j <i-1$. 
    If $x_i \in [a_0,a_1)$, using the same inductive argument in (ii), we have $\abs{b_0-a_0} \ge \Absbig{ F^{i-1}([a_0,x_i]) } > \theta^{i-1} \abs{a_0- x_i}$. 
    Similarly, if $x_i \in (b_1,b_0] $, we also have $\abs{b_0-a_0} \ge \Absbig{ F^{i-1}([x_i,b_0]) } > \theta^{i-1} \abs{b_0- x_i}$. 
    Therefore, $\lim_{i\to +\infty} \min \{ \abs{a_0-x_i} , \, \abs{b_0-x_i} \} =0$, and hence the only possible limit points of $\{x_i\}_{i\in \N_0}$ are $a_0$ and $b_0$. 
    This shows that $K_0$ is compact.
		
	\smallskip
	(iv) Fix $G\in W$. 
    Since $G(\{ a_0, \, b_0 \}) \subseteq \{a_0, \, b_0\}$ and $G(M) \subseteq M$,  it follows from (ii) that every $G$-invariant measure is a convex combination of a $G$-invariant measure supported on $\{a_0, \, b_0\}$ and a $G$-invariant measure supported on $M$. 
    More precisely, for every $\mu \in \cM(M_0,G)$, there exist $t_{\mu,1}, \, t_{\mu,2} \in [0,1]$ with $t_{\mu,1}+t_{\mu,2}=1$ and $\mu_M ,\, \mu_e\in \cM(M_0,G)$ with $\supp\mu_M \subseteq M$ and $\supp\mu_e \subseteq \{a_0, \, b_0\}$ such that $\mu=t_{\mu,1} \mu_M +t_{\mu,2} \mu_e$. 
    If $\phi \in C(M_0)$ satisfies $\phi(a_0)<\mpe(G,\phi)$ and $\phi(b_0)< \mpe(G,\phi)$,  and if $\mu\in \Mmax(G,\phi)$, then $\mu_e$ can not be $\phi$-maximizing. 
    Hence $t_{\mu,2}=0$, and therefore $\mu=\mu_M$ is supported on $M$.
\end{proof}

After the above preparatory results, we are now ready to prove Theorem~\ref{1dim}.

\begin{proof}[\bf Proof of Theorem~\ref{1dim}]
	If $M$ is the circle, then Theorem~\ref{1dim} follows by combining Proposition~\ref{endpointsp}, Lemma~\ref{inTPO1dim}, \cite[Theorem~2]{KSS07}, and the fact that hyperbolicity is an open property in $C^r(M,M)$ (see \cite[Theorem~2.4, Chapter~3]{dMvS93}), by an argument analogous to the proof of Theorem~\ref{realqua}.

    We now assume that $M$ is a compact interval and write $M=[a,b]$.  
	Fix $k\in C^r(M,M)$, $\phi\in \cP$, and $\ve>0$. 
    Note that $\{l \in C^r(M,M) : \{l(a) , \, l(b)\} \subseteq (a,b), \, l'(a)\neq 0\text{ and } l'(b)\neq 0\}$ is open and dense in $C^r(M,M)$.
    Moreover, by \cite[Theorem~2]{KSS07}, hyperbolic maps are dense in $C^r(M,M)$. 
	Therefore, there exists a hyperbolic map $f\in C^r(M,M)$ such that $\{f(a),f(b)\}\subseteq (a,b)$, $f'(a)\neq 0$, $f'(b)\neq 0$, and
	\begin{equation}\label{pertmap}
		\Hnorm{C^r}{f-k} < \ve.
	\end{equation}
    By Lemma~\ref{extension} and Lemma~\ref{extensionprop}~(iii), $f$ admits a hyperbolic extension $F \in C_0^r([a_0,b_0],[a_0,b_0])$ with $a_0<a<b<b_0$. 
    Set $M_0\=[a_0,b_0]$, and define $\cP_0\=\Holder{\alpha}(M_0,\R)$ if $\cP=\Holder{\alpha}(M,\R)$, and $\cP_0\=C^1(M_0,\R)$ if $\cP=C^1(M,\R)$. If $\cP = \Holder{\alpha}(M)$ for some $\alpha\in (0,1]$, then for simplicity we write $\Hnorm{\cP}{\cdot} \= \Hnorm{\alpha,M}{\cdot}$ and $\Hnorm{\cP_0}{\cdot} \= \Hnorm{\alpha,M_0}{\cdot}$. Similarly, if $\cP = C^1(M)$, then for simplicity we write  $\Hnorm{\cP}{\cdot} \= \Hnorm{C^1,M}{\cdot}$ and $\Hnorm{\cP_0}{\cdot} \= \Hnorm{C^1,M_0}{\cdot}$.
	
	Let $\Phi\in \cP_0$ satisfy $\Phi|_M=\phi$, and 
    \begin{equation}\label{Phia0b0}
	        	\Phi(a_0) < \mpe(f,\phi)-1 \quad \text{ and }\quad
	\Phi(b_0) < \mpe(f,\phi)-1.
	\end{equation}
Since $F|_M = f$ and $F(M)=f(M)\subseteq M$, Lemma~\ref{extensionprop}~(ii) guarantees that every $F$-invariant measure is supported on $M\cup\{a_0,b_0\}$. We claim that $\mpe(F,\Phi) = \mpe(f,\phi)$.
Indeed, any $(f,\phi)$-maximizing measure is $F$-invariant with $\Phi$-average equal to $\mpe(f,\phi)$
     (since $F|_M=f$ and $\Phi|_M=\phi$), so $\mpe(F,\Phi)\ge\mpe(f,\phi)$.
Conversely, for each $F$-invariant measure $\mu$, writing
     $\mu = t\mu_M + (1-t)\mu_e$ where $\mu_M$ is supported on $M$,
     $\mu_e$ is supported on $\{a_0,b_0\}$, and $t\in[0,1]$,
    we get from (\ref{Phia0b0}) that
\begin{equation*}
    \int \Phi\,\mathrm{d}\mu 
    = t\int\phi\,\mathrm{d}\mu_M + (1-t)\int\Phi\,\mathrm{d}\mu_e
    \le t \cdot \mpe(f,\phi) + (1-t)\cdot\max\{\Phi(a_0),\Phi(b_0)\}
    \le \mpe(f,\phi) .
\end{equation*}
Consequently $\mpe(F,\Phi) \le \mpe(f,\phi)$, so indeed $\mpe(F,\Phi) = \mpe(f,\phi)$.

	By Lemma~\ref{inTPO1dim}, there exists $\Psi\in \Lock(F,\cP_0)$ satisfying $\Hnorm{\cP_0}{\Phi-\Psi} < \min\{\ve,\,1/4\}.$
	Consequently,
	\begin{equation}\label{mpeless}
		\begin{aligned}
			\Psi(a_0) 
			< \Phi(a_0) + 1/4 
			&< \mpe(F,\Phi) - 3/4 
			< \mpe(F,\Psi) - 1/2,\\
			\Psi(b_0) 
			< \Phi(b_0) + 1/4 
			&< \mpe(F,\Phi) - 3/4 
			< \mpe(F,\Psi) - 1/2.
		\end{aligned}
	\end{equation}

	We may assume without loss of generality that $\Psi$ is not a constant. Indeed, since $\Lock(F,\cP_0)$ is open, we may replace $\Psi$ by a sufficiently small perturbation satisfying the same properties.
	By Proposition~\ref{endpointsp}, there exists a neighbourhood $U_1 \subseteq C^r_0(M_0,M_0) \times \cP_0$ of $(F,\Psi)$ such that $U_1 \subseteq \Lock(C^r_0(M_0,M_0), \cP_0)$.
	
    By Proposition~\ref{endpointsp}, $HC^r_0(M_0,M_0)$ is an open subset of $C^r_0(M_0,M_0)$ containing $F$. Because $F$ satisfies conditions (IS) and (ML) in Definition~\ref{Fstablyhyperbolic}, and $U_1$ guarantees that $\Psi\in\Lock(G,\cP_0)$ for all $G$ in a neighbourhood of $F$, Remark~\ref{nonwander}~(ii) ensures that the map $\mpe(\cdot, \Psi)$ is continuous at $F$ (noting that if $\cP_0=C^1(M_0,\R)$ then we use the continuous embedding $C^1 \subseteq \Holder{1}$). 
    Moreover, for each $G\in C^r_0(M_0,M_0)$, the map $\mpe(G,\cdot) \: \cP_0 \to \R$ is $1$-Lipschitz with respect to the $\Hnorm{\infty}{\cdot}$ norm. 
    More precisely, for all $G\in C^r_0(M_0,M_0)$, $\xi_1, \, \xi_2\in \cP_0$, we have $\abs{\mpe(G,\xi_1) - \mpe(G,\xi_2)} \le \Hnorm{\infty}{\xi_1-\xi_2} \le \Hnorm{\cP_0}{\xi_1-\xi_2}$. 
    Hence there exists a neighbourhood $U_2 \subseteq C^r_0(M_0,M_0) \times \cP_0$ of $(F,\Psi)$ such that 
    $\abs{\mpe(G,\Psi_1)-\mpe(F,\Psi)} < 1/4$ for every pair $(G,\Psi_1) \in U_2$. 
    
    Let $W \subseteq C^r_0(M_0,M_0)$ be the neighbourhood of $F$ given by Lemma~\ref{extensionprop}~(ii), and define $U_3 \= W \times B_{\cP_0}(\Psi,1/4)$. Set 
\begin{equation*}
    U_0 \= U_1 \cap U_2 \cap U_3, 
\end{equation*}
	so that $U_0$ is a neighbourhood of $(F,\Psi)$.
    Since $U_0\subseteq U_1$, for each pair $(G,\Psi_1)\in U_0$, the $(G,\Psi_1)$-maximizing measure is unique and periodic. Moreover, by \eqref{mpeless} we obtain
	\begin{equation}\label{mpeless2}
		\begin{aligned}
			\Psi_1(a_0) 
			&< \Psi(a_0)+1/4 
			< \mpe(F,\Psi)-1/4 
			< \mpe(G,\Psi_1),\\
			\Psi_1(b_0) 
			&< \Psi(b_0)+1/4 
			< \mpe(F,\Psi)-1/4 
			< \mpe(G,\Psi_1).
		\end{aligned}
	\end{equation}

	Let $\kappa>0$ be the constant in Lemma~\ref{extensionprop}~(i) applied to $F$ and $M_0$.
	Choose $\delta\in (0,\kappa)$ such that
	\begin{equation}\label{neighdistance}
		B_{C^r_0(M_0,M_0)}(F,\delta)\times B_{\cP_0}(\Psi,\delta)\subseteq U_0.
	\end{equation}
	Set $\psi\=\Psi|_M$. Then
	\begin{equation}\label{pertfunction}
		\Hnorm{\cP}{\phi-\psi} \le \Hnorm{\cP_0}{\Phi-\Psi} < \ve.
	\end{equation}
	
	We now show that $\psi$ belongs to the interior of $\Lock(C^r(M,M),\cP)$.
	Applying Lemma~\ref{exnormbound} to $M\subseteq M_0$, we obtain constants $D_1\ge 1$ and $D_r\ge 1$. 
	Denote
	\begin{equation*}
	    	V \= B_{C^r(M,M)}(f,\delta/D_r)\times B_{\cP}(\psi,\delta/D_1),
	\end{equation*}
     and choose an arbitrary pair $(g,\xi)\in V$.

	By Lemma~\ref{exnormbound}, the map $g-f$ admits an extension $H\in C^r(M_0,\R)$ satisfying $H(a_0)=H(b_0)=0$ and
	\begin{equation}\label{ext1}
		\Hnorm{C^r,M_0}{H}
		\le D_r\Hnorm{C^r,M}{g-f}
		\le \delta
		<\kappa.
	\end{equation}
	Define $G\=F+H$. Then $G|_M=g$, and by \eqref{ext1} together with Lemma~\ref{extensionprop}~(i), we have $G\in C^r_0(M_0,M_0)$.
	On the other hand, if $\cP=\Holder{\alpha}(M,\R)$, then by \cite[Theorem~1.33]{Wea18} there exists an extension $\zeta_1\in \Holder{\alpha}(M_0,\R)$ of $\xi-\psi$ such that
	$\Hnorm{\alpha, M_0}{\zeta_1} \le \Hnorm{\alpha,M}{\xi-\psi}$.
	If $\cP=C^1(M,\R)$, then by Lemma~\ref{exnormbound} there exists an extension $\zeta_2\in C^1(M_0,\R)$ of $\xi-\psi$ such that
	$\Hnorm{C^1,M_0}{\zeta_2} \le D_1\Hnorm{C^1,M}{\xi-\psi}$.
     
   Since $D_1\ge 1$, in either case there exists an extension $\zeta\in \cP_0$ of $\xi-\psi$ satisfying
	\begin{equation}\label{ext2}
		\Hnorm{\cP_0}{\zeta} \le D_1 \Hnorm{\cP}{\xi-\psi} \le \delta.
	\end{equation}
	Define $\Xi\=\Psi+\zeta$. Then $\Xi|_M=\xi$. Moreover, by \eqref{ext1} and \eqref{ext2},
    \begin{equation*}
        	(G,\Xi)\in B_{C^r_0(M_0,M_0)}(F,\delta)\times B_{\cP_0}(\Psi,\delta).
    \end{equation*}
Hence by \eqref{neighdistance}, $(G,\Xi)\in U_0 \subseteq U_1$. Because $U_1 \subseteq \Lock(C^r_0(M_0,M_0), \cP_0)$, the pair $(G,\Xi)$ has the locking property: its maximizing measure is unique and periodic, and $\Mmax(G,\Xi_1) = \Mmax(G,\Xi)$ for every $\Xi_1 \in \cP_0$ sufficiently close to $\Xi$. 
    
    Now $G|_M = g \in C^r(M,M)$, so $G(M) \subseteq M$.
    Since $(G, \Xi) \in U_0 \subseteq U_3$, we also know that $G\in W$.
    Moreover, the bounds in \eqref{mpeless2} establish that $\Xi(a_0) < \mpe(G,\Xi)$ and $\Xi(b_0) < \mpe(G,\Xi)$.
    Combining these, Lemma~\ref{extensionprop}~(iv) gives that the unique $(G,\Xi)$-maximizing measure, $\mu^*$ say, has support contained in $M$. Since $G|_M=g$ and $\Xi|_M=\xi$, it follows that $\mu^*$ is precisely the unique $(g,\xi)$-maximizing measure.
    
    It remains to verify that $(g,\xi)$ has the locking property (rather than merely the PO property). Given any $\xi_1\in\cP$ sufficiently close to $\xi$, by Lemma~\ref{exnormbound} (if $\cP=C^1(M,\R)$) or by \cite[Theorem~1.33]{Wea18} (if $\cP=\Holder{\alpha}(M,\R)$), there exists an extension $\zeta_1\in\cP_0$ of $\xi_1-\xi$ satisfying $\Hnorm{\cP_0}{\zeta_1}\le D_1\Hnorm{\cP}{\xi_1-\xi}$. 
    Setting $\Xi_1 \= \Xi+\zeta_1$, we have $\Xi_1|_M=\xi_1$ and $\Hnorm{\cP_0}{\Xi_1-\Xi} \le D_1\Hnorm{\cP}{\xi_1-\xi}$, so by choosing $\xi_1$ sufficiently close to $\xi$, we can make the extension $\Xi_1$ arbitrarily close to $\Xi$. The locking property of $(G,\Xi)$ then yields $\Mmax(G,\Xi_1) = \Mmax(G,\Xi) = \{\mu^*\}$. 
Since $\mu^*$ is supported on $M$ and $G|_M = g$, the measure $\mu^*$
is $g$-invariant with $\int \xi_1\,\mathrm{d}\mu^* 
= \int \Xi_1\,\mathrm{d}\mu^* = \mpe(G,\Xi_1)$.
Conversely, any $(g,\xi_1)$-maximizing measure $\nu$ is also
$(G,\Xi_1)$-maximizing (since $\nu$ is $G$-invariant and $\int\Xi_1\,\mathrm{d}\nu
= \int\xi_1\,\mathrm{d}\nu = \mpe(g,\xi_1) = \mpe(G,\Xi_1)$),
so $\nu = \mu^*$ by uniqueness.
Hence $\Mmax(g,\xi_1) = \{\mu^*\} = \Mmax(g,\xi)$, and therefore
$(g,\xi)\in\Lock(C^r(M,M),\cP)$.
    
	Since $(g,\xi)\in V$ was arbitrary, we conclude that $(f,\psi)$ belongs to the interior of $\Lock(C^r(M,M),\cP)$.
	
	Finally, by \eqref{pertmap}, \eqref{pertfunction}, the arbitrariness of $\ve>0$, $k\in C^r(M,M)$, and $\phi\in \cP$, we conclude that the interior of $\Lock(C^r(M,M),\cP)$ is an open dense subset of $C^r(M,M)\times \cP$. This completes the proof of Theorem~\ref{1dim}.
\end{proof}

We conclude this section with the proof of Theorem~\ref{realqua01}.

\begin{proof}[\bf Proof of Theorem~\ref{realqua01}]
Since $g_a(x) = ax(1-x)$ is linear in $a$, 
the topology on $\cF$ 
coincides with the subspace topology inherited from $C^1([0,1],[0,1])$. 
Since $g_a(0) = g_a(1) = 0$ for each $a \in [0,4]$, every map in $\cF$
preserves the boundary of $[0,1]$, so
$\cF \subseteq C^1_0([0,1],[0,1])$. 

Let $\cH\cF \subseteq \cF$ denote the subset of hyperbolic maps.
By Graczyk--\'Swi\c{a}tek~\cite{GS97}
and Lyubich~\cite[p.~4]{Ly97},
$\cH\cF $ is open and dense in $\cF$.
Since $\cH\cF$ is open and dense in $\cF$, the product
$\cH\cF \times \cP$ is open and dense in $\cF \times \cP$.
It therefore suffices to show that the joint locking set
$\Lock(\cH\cF, \cP)$ contains an open dense subset
of $\cH\cF \times \cP$.

To show that maps in $\cH\cF$ are $\cH\cF$-stably hyperbolic,
fix $f \in \cH\cF$.
Since $f \in HC^1_0([0,1],[0,1])$ and $\cH\cF \subseteq C^1_0([0,1],[0,1])$,
Proposition~\ref{endpointsp} guarantees that $f$ is
$C^1_0([0,1],[0,1])$-stably hyperbolic in the sense of
Remark~\ref{nonwander}~(i).
Restricting the neighbourhood of $C^1_0([0,1],[0,1])$ furnished by Definition~\ref{Fstablyhyperbolic}
to the smaller family $\cH\cF \subseteq C^1_0([0,1],[0,1])$,
it follows that $f$ is $\cH\cF$-stably hyperbolic.

Now note that Individual TPO holds for maps in $\cH\cF$:
by Lemma~\ref{inTPO1dim}, for every $f \in \cH\cF$,
the locking set $\Lock(f, \mathcal{P})$ is an open dense subset of $\cP$.

We now claim that the set
\begin{equation*}
	  \mathcal{L}_0
  \=
  \{
    (g, \phi) \in \Lock(\cH\cF,\,\cP)
    :
    \phi \text{ is nonconstant}
  \}
\end{equation*}
is an open dense subset
of $\cH\cF \times \mathcal{P}$.

To check openness, suppose $(g, \phi) \in \cL_0$.
Then $g$ is $\cH\cF$-stably hyperbolic by the above, and $\phi$ is nonconstant,
so Theorem~\ref{openness} implies that $(g, \phi)$ lies in the interior of
$\Lock(\cH\cF,\cP)$. Clearly, being nonconstant is an open property in $\cP$, so $(g,\phi)$ lies in the interior of $\cL_0$.
Since $(g, \phi) \in \cL_0$ was arbitrary, it follows that $\cL_0$ is
open in $\cH\cF \times \cP$.

To check density of $\cL_0$ in $\cH\cF \times \cP$,
let $\cW$ be any nonempty open set in $\cH\cF \times \cP$.
Then $\cW$ contains a basic open set of the form $\cV \times \cB$,
where $\cV$ is nonempty and open in $\cH\cF$, and $\cB$ is
nonempty and open in $\cP$.
Choose any $f \in \cV$.
We know that $\Lock(f,\cP)$ is dense in $\cP$,
by the above,
so there exists a nonconstant potential
$\phi \in \cB \cap \Lock(f,\cP)$.
It follows that $(f, \phi) \in \cL_0 \cap \cW$, so indeed $\cL_0$ is
dense in $\cH\cF \times \cP$.

Since $\cL_0$ is open and dense in $\cH\cF \times \cP$,
and $\cH\cF \times \cP$ is open and dense in $\cF \times \cP$,
the set $\cL_0$ is in particular dense in $\cF \times \cP$.
Moreover, every element of $\cL_0$ lies in the interior of
$\Lock(\cF,\cP)$ (since $\cH\cF$ is open in $\cF$,
any $\cH\cF$-locking neighbourhood is also a $\cF$-locking neighbourhood),
therefore $\Lock(\cF,\cP)$ contains an open dense subset of
$\cF \times \cP$; 
in other words $\cF \times \cP$ has the
Joint TPO property, as required.
\end{proof}

\appendix
\section{Proof of some technical lemmas}\label{A}

First, we give a complete proof of Lemma~\ref{A(b)}.

Let $f\: M \to M$ be an Axiom~A diffeomorphism with the splitting $T_x M = E^s(x) \oplus E^u(x)$. A Riemannian metric on $M$ is said to be \emph{adapted to $f$} if there exists a constant $\tau_f\in (0,1)$ such that for the induced norm $\Hseminorm{}{\,\cdot\,}$, and every $x\in \Omega(f)$,  
\begin{equation*}
    \Hseminorm{}{\mathrm{D}f(x)(v)} \le \tau_f \Hseminorm{}{v}   \text{ for all } v\in E^s(x) \quad \text{ and } \quad
    \Hseminormbig{}{\mathrm{D}f^{-1}(x)(w)} \le \tau_f \Hseminorm{}{w}   \text{ for all }w\in E^u(x).
\end{equation*}
By \cite[Theorem~4.4]{Wen16}, there exists a smooth Riemannian metric adapted to $f$. Furthermore, we denote by $\Hnorm{f}{\cdot}$ the \emph{box-adjusted norm} of $\Hseminorm{}{\,\cdot\,}$, defined as
\begin{equation}\label{box}
	\Hnorm{f}{v} \= \max \{ \Hseminorm{}{v_s}, \, \Hseminorm{}{v_u} \} \quad \text{ for } x\in \Omega(f) \text{ and } v\in T_x M,
\end{equation}
where $v = v_s + v_u$ is the unique splitting with $v_s\in E^s(x)$ and $v_u \in E^u(x)$.

\begin{proof}[\bf Proof of Lemma~\ref{A(b)}]
	Fix $r\in \N$ and $f\in \cA^r(M)$. We consider a Riemannian metric $\Hseminorm{a}{\,\cdot\,}$ that is adapted to $f$, and use the distance $d_a$ induced by $\Hseminorm{a}{\,\cdot\,}$. Since $M$ is compact, both $\Hseminorm{}{\,\cdot\,}$ and $\Hseminorm{a}{\,\cdot\,}$ are equivalent, as are $d$ and $d_a$, so it is sufficient to prove condition (RHE) in Definition~\ref{Fstablyhyperbolic} for $d_a$. Throughout this proof, we always consider exponential maps and Lipschitz constants with respect to $\Hseminorm{a}{\,\cdot\,}$ and $d_a$.

	For every $x\in M$, let $\exp_x$ be the exponential map at $x$, and we write $T_x M (\delta) \= \{v\in T_x M : \Hseminorm{a}{v} \le \delta\}$ for every $\delta>0$.
    
	Recall the following well-known result in Riemannian geometry
    (see e.g.~\cite[Chapter~10]{Le18}): there exists a constant $\rho>0$ 
    (the injectivity radius) such that for every $x\in M$, $\exp_x \: T_x M(\rho) \to \overline{B(x,\rho)}$ is a homeomorphism and $d_a(x,\exp_x(v)) = \Hseminorm{a}{v}$ for all $v\in T_x M(\rho)$.

	For every Axiom~A diffeomorphism $g\: M \to M$, we consider the local map on the tangent spaces 
    \begin{equation*}
        F_g(x,v) = \bigl( \exp_{g(x)}^{-1} \circ g \circ \exp_x \bigr) (v) \quad \text{ if } x\in M \text{ and } v \in T_x M(\rho/\LIP(g)),
    \end{equation*}
    where $\exp_{g(x)}^{-1}$ denotes the inverse of $\exp_{g(x)}|_{T_{g(x)}M(\rho)}$.
Since $g$ is a diffeomorphism, $g(M)=M$, so 
$\diam(M) =\diam(g(M)) \le \LIP(g)\diam(M)$, therefore
    $\LIP(g)\ge 1$.
    
    By the definition of $F_g$, if $x, \, y \in M$ and $n\in \N$ satisfy $d_a\bigl(g^i(x),g^i(y)\bigr)< \rho/\LIP(g)$ for all $0\le i\le n-1$, then letting $v\in T_xM(\rho)$ satisfy $\exp_x(v) = y$ gives $\exp_{g^i(x)} \bigl(F^i_g(x,v) \bigr) = g^i(y)$ and $d_a\bigl(g^i(x),g^i(y)\bigr) < \rho$ for all $0\le i\le n$.
    So if $0\le i\le n$ then $F_g^i(x,v)$ is well defined and
	\begin{equation}\label{equaldistance}
		d_a\bigl(g^i(x),g^i(y)\bigr) = \Hseminormbig{a}{F_g^i(x,v)}. 
	\end{equation}

	In the following, we construct a neighbourhood $U$ of $f$.
    First, define 
	\begin{equation}\label{Ar}
		\eta\= \rho/(2\LIP(f)).
	\end{equation}
	There exists a $C^1$ neighbourhood (hence also a $C^r$ neighbourhood) $U_0$ of $f$ such that if $g\in U_0$ then 
	\begin{equation}\label{Alipbound}
		\LIP(g) \le 2\LIP(f).
	\end{equation}
For each $g\in U_0$, the map $F_g$ is well defined on
$TM(\eta)\=\bigcup_{x\in M}T_xM(\eta)$ by
     (\ref{Ar}) and (\ref{Alipbound}).
     For each $x\in M$ and $v\in T_xM(\eta)$, define the nonlinear remainder
\begin{equation*}
    \phi_g(x,v) \= \exp_{g(x)}^{-1}\bigl(g(\exp_x(v))\bigr)
    - \mathrm{D}g(x)(v) \in T_{g(x)}M,
\end{equation*}
     so that $\phi_g$ is also well defined on $TM(\eta)$ for $g\in U_0$.
     Note that $\phi_g(x,0) = 0\in T_{g(x)}M$ for every $x\in M$.

	Next, applying \cite[Lemma~4.8]{Wen16} to $f$ and $\Hseminorm{a}{\,\cdot\,}$, there exist constants $\ve>0$, $C>1$, and a $C^1$ neighbourhood (hence also a $C^r$ neighbourhood) $U_1$ of $f$ such that if $g\in U_1$ and $\Omega(g) \subseteq B(\Omega(f), \ve)$ then $\Hseminorm{a}{\,\cdot\,}$ is also adapted to $g$, with 
	\begin{equation}\label{U11}
		\tau_g < \tau_f + 3^{-1}(1-\tau_f) <1,
	\end{equation}
	and the norm $\Hnorm{g}{\cdot}$ defined by (\ref{box}) on $\Omega(g)$ satisfies
	\begin{equation}\label{U12}
		C^{-1} \Hnorm{g}{v} \le \Hseminorm{a}{v} \le C \Hnorm{g}{v} \quad \text{ for } x\in \Omega(g) \text{ and } v\in T_xM(\eta),
	\end{equation}
    where the constant $C$ is uniform in $g\in U_1$.
	By \cite[Lemma~4.11]{Wen16} applied to $f$, there exists a constant $\delta_0>0$ and a $C^1$ neighbourhood (hence also a $C^r$ neighbourhood)\footnote{We choose $U_2$ to be contained in $U_0$, in order that $\phi_g$ is well-defined for all $g\in U_2$.} $U_2 \subseteq U_0$ of $f$ such that if $g \in U_2$ then 
	\begin{equation}\label{U2}
		\LIP_{v,\delta_0}^{\Hseminorm{a}{\cdot}} (\phi_g) < 3^{-1}C^{-2} (1-\tau_f),
	\end{equation} 
	where $\LIP_{v,\delta_0}^{\Hseminorm{a}{\cdot}} (\phi_g) \= \sup_{x\in M} \bigl(\LIP_{\Hseminorm{a}{\cdot}} \phi_g(x, \cdot)|_{T_xM(\delta_0)} \bigr) $.
	Finally, since $f$ has $C^r$ $\ve$-$\Omega$-stability (\cite[Theorem~5.8]{Wen16}), there exists a $C^r$ neighbourhood $U_3$ of $f$ such that if $g\in U_3$ then $\Omega(g) \subseteq B(\Omega(f), \ve)$.
	Now define $U \= U_0 \cap U_1 \cap U_2 \cap U_3$,
	so that  (\ref{Alipbound}), (\ref{U11}), (\ref{U12}),  (\ref{U2}) hold for all $g\in U$.
	
	Now define constants
	\begin{align}
		K_0 &\= C^2, \label{AK} \\
		\delta &\= \min\{\eta, \, \delta_0\}, \label{Adelta} \\
		\lambda &\= \min \bigl\{  3 (2+\tau_f)^{-1} , \,  3 (1+2\tau_f)^{-1} - 3^{-1} (1-\tau_f) \bigr\}, \label{Alambda}
	\end{align}
	noting that $\lambda>1$ since $0<\tau_f<1$.
    
	Next we shall verify that the inequality (\ref{assumptionb}) from condition (RHE) in Definition~\ref{Fstablyhyperbolic} holds for all $g\in U$. 
    Fix $g\in U$, $n\in\N$, $x\in\Omega(g)$, and $y\in M$ with
$\max_{0\le i\le n}d_a(g^i(x),g^i(y))\le\delta$.
Now $d_a(x,y)\le\delta\le\eta\le\rho$, so 
$v\=\exp_x^{-1}(y)\in T_xM(\rho)$ is well defined.
We verify by induction that $F_g^i(x,v)$ is well defined for
all $0\le i\le n$, and that \eqref{equaldistance} holds at each step.
At $i=0$ both are immediate
from $\exp_x(v)=y$.
Assuming \eqref{equaldistance} holds at
step $i$, we have
\begin{equation*}
    d_a(g^i(x),g^i(y)) = \Hseminormbig{a}{F_g^i(x,v)}
    \le \delta \le \eta = \rho/(2\LIP(f)) \le \rho/\LIP(g),
\end{equation*}
using \eqref{Adelta}, \eqref{Ar}, and \eqref{Alipbound},
so $F_g^{i+1}(x,v)$ is well defined and \eqref{equaldistance} holds
at step $i+1$.
Since 
$\exp_x\:T_xM(\delta)\to B_{(M,d_a)}(x,\delta)$ is a homeomorphism and (\ref{equaldistance})
identifies
$d_a(g^i(x),g^i(y))$ with $\Hseminormbig{a}{F_g^i(x,v)}$,
it now suffices to prove that
	\begin{equation}\label{Aeq}
		\Hseminormbig{a}{ F_g^i (x,v) } \le K_0 \lambda^{-\min \{ i, \,  n-i\}} \bigl( \Hseminorm{a}{v} + \Hseminormbig{a}{F^n_g(x,v)} \bigr).
	\end{equation}	 
    By (\ref{U12}), (\ref{Adelta}), and (\ref{U2}), we have 
    \begin{equation*}
               \LIP_{v,\delta}^{\Hnorm{g}{\cdot}} (\phi_g )\leq C^2   \LIP_{v,\delta}^{\Hseminorm{a}{\cdot}} (\phi_g) \le C^2\LIP_{v,\delta_0}^{\Hseminorm{a}{\cdot}} (\phi_g) <3^{-1}(1-\tau_f),
    \end{equation*} 
    where $\LIP_{v,\delta}^{\Hnorm{g}{\cdot}} (\phi_g )\=\sup_{z\in M} \bigl(\LIP_{\Hnorm{g}{\cdot}} \phi_g(z, \cdot)|_{T_zM(\delta)} \bigr)$. Combining this with (\ref{U11}) and (\ref{Alambda}) gives
	\begin{equation}\label{Albound}
    \begin{aligned}
        \tau_g + \LIP_{v,\delta}^{\Hnorm{g}{\cdot}} (\phi_g )&<\tau_f+3^{-1}(1-\tau_f)+3^{-1}(1-\tau_f)< \lambda^{-1} \quad \text{ and }\\
        \tau_g^{-1} - \LIP_{v,\delta}^{\Hnorm{g}{\cdot}} (\phi_g) &> 3 (1+2\tau_f)^{-1}-3^{-1}(1-\tau_f)>\lambda.    
        \end{aligned}		
	\end{equation}
	For each $z\in \Omega(g)$, let $T_z M = E_g^s(z) \oplus E_g^u(z)$ be the hyperbolic splitting of $g$, and for every $w\in T_zM$, write $w= w_s+w_u$ for the unique decomposition with $w_s \in E_g^s(z)$ and $w_u \in E_g^u(z)$. 
    Since $\phi_g(z,0)=0$, the definition of
    $\LIP_{v,\delta}^{\Hnorm{g}{\cdot}}(\phi_g)$ and (\ref{box}) give,
    for each $z\in\Omega(g)$ and $w\in T_zM(\delta)$,
    \begin{equation}\label{phig_bound}
        \Hseminorm{a}{(\phi_g(z,w))_\sigma}
        \le \Hnorm{g}{\phi_g(z,w)}
        \le \LIP_{v,\delta}^{\Hnorm{g}{\cdot}}(\phi_g)\,\Hnorm{g}{w},
        \quad\text{for each } \sigma\in\{s,u\}.
    \end{equation}
    Then by (\ref{box}), the fact that $\Hseminorm{a}{\,\cdot\,}$ is adapted to $g$, 
    (\ref{phig_bound}),
    and (\ref{Albound}), for each $z\in \Omega(g)$ and each $w\in T_zM(\delta)$, we obtain
	\begin{equation}\label{es}
		\begin{aligned}
			\Hseminorm{a}{( F_g(z,w) )_s } 
            &\le  \Hseminorm{a}{\mathrm{D}g(z)(w_s)} + \Hseminorm{a}{ ( \phi_g(z,w) )_s } 
			 \le \tau_g \Hseminorm{a}{w_s} + \Hseminorm{a}{ ( \phi_g(z,w) )_s } \\
			& \le \tau_g \Hnorm{g}{w} + \LIP_{v,\delta}^{\Hnorm{g}{\cdot}} (\phi_g) \Hnorm{g}{w} 
			 < \lambda^{-1} \Hnorm{g}{w}.
		\end{aligned}
	\end{equation}
Similarly, 
$\mathrm{D}g(z)(w_u)\in E_g^u(g(z))$, so $\Hseminorm{a}{\mathrm{D}g(z)(w_u)} \ge \tau_g^{-1}
\Hseminorm{a}{w_u}$. 
Since $(F_g(z,w))_u = \mathrm{D}g(z)(w_u) + (\phi_g(z,w))_u$,
it follows from 
(\ref{phig_bound}) 
and (\ref{Albound})
that
\begin{equation}\label{eu}
    \Hseminorm{a}{(F_g(z,w))_u}
    \ge \Hseminorm{a}{\mathrm{D}g(z)(w_u)}
     - \Hseminorm{a}{(\phi_g(z,w))_u}
    \ge \tau_g^{-1}\Hseminorm{a}{w_u}
     - \LIP_{v,\delta}^{\Hnorm{g}{\cdot}}(\phi_g)\,\Hnorm{g}{w}.
\end{equation}

Next, we consider separately the following two cases.

\smallskip
\emph{Case~1.} Assume that $\Hseminormbig{a}{\bigl( F_g^i(x,v) \bigr)_u } \le \Hseminormbig{a}{\bigl( F_g^i(x,v) \bigr)_s }$ for all $0\le i \le n$. 
In this case, we conclude from (\ref{es}) that for each 
$1 \le i \le n$,
\begin{equation}\label{Ac1}
	\begin{aligned}
		\Hnormbig{g}{F_g^i(x,v)} 
        &= \Hseminormbig{a}{\bigl( F_g^i(x,v) \bigr)_s } 
        < \lambda^{-1} \Hseminormbig{a}{\bigl( F_g^{i-1}(x,v) \bigr)_s } 
        < \cdots 
        < \lambda^{-i} \Hnorm{g}{v} \\
		&\le\lambda^{-\min\{ i, \, n-i \}} \bigl( \Hnorm{g}{v} + \Hnormbig{g}{F^n_g(x,v)} \bigr),
	\end{aligned}
\end{equation}
and the case $i=0$ holds trivially.

\smallskip
\emph{Case~2.} Assume that 
$\Hseminormbig{a}{\bigl( F_g^k(x,v) \bigr)_u } > \Hseminormbig{a}{\bigl( F_g^k(x,v) \bigr)_s }$
for some $0 \le k\le n$, where $k$ is chosen to be the smallest number with this property. An argument analogous to Case~1 gives that if $i\notin \{k,\,\dots,\,n\}$, then 
\begin{equation}\label{Ac21}
	\Hnormbig{g}{F_g^i(x,v)} < \lambda^{-i} \Hnorm{g}{v}.
\end{equation}
Since $\Hseminormbig{a}{\bigl( F_g^k(x,v) \bigr)_u} = \Hnormbig{g}{ F_g^k(x,v) }$, by (\ref{eu}), (\ref{Albound}), and (\ref{es}), we obtain that if $k\le n-1$ then
\begin{equation}\label{Ac2}
	\Hseminormbig{a}{\bigl( F_g^{k+1}(x,v) \bigr)_u } >
    \bigl(\tau_g^{-1} - \LIP_{v,\delta}^{\Hnorm{g}{\cdot}} (\phi_g) \bigr) \Hnormbig{g}{ F_g^k(x,v)  } > \lambda \Hnormbig{g}{ F_g^k(x,v)  } >\Hseminormbig{a}{\bigl( F_g^{k+1}(x,v) \bigr)_s }.
\end{equation}
Since $\Hseminormbig{a}{(F_g^i(x,v))_u} > \Hseminormbig{a}{(F_g^i(x,v))_s}$
for all $k \le i \le n$ (which follows by iterating (\ref{Ac2})
forward from $k$), the backward iteration of (\ref{Ac2}) from step
$n$ gives, for all $k \le i\le n-1$,
\begin{equation}\label{Ac22}
	 \Hnormbig{g}{ F_g^i(x,v)  } 
     = \Hseminormbig{a}{(F_g^i(x,v))_u}
     \leq \lambda^{i-n}  \Hnormbig{g}{ F_g^n(x,v)  }.
\end{equation}
Clearly, (\ref{Ac22}) is also true when $k=n$.
In this case, we conclude from (\ref{Ac21}) and (\ref{Ac22}) that if $0\le i \le n$ then
\begin{equation}\label{Ac23}
	\Hnormbig{g}{ F_g^i(x,v)  } 
    \leq \lambda^{-\min\{ i, \, n-i \}} \bigl( \Hnorm{g}{v} + \Hnormbig{g}{F^n_g(x,v)} \bigr).
\end{equation}

Finally, combining (\ref{Ac1}), (\ref{Ac23}), (\ref{AK}), (\ref{U12}) gives (\ref{Aeq}).
Together with (\ref{equaldistance}) and the fact that
$\exp_x\:T_xM(\delta)\to B_{d_a}(x,\delta)$ is a homeomorphism,
this establishes the required (\ref{assumptionb})
for the distance $d_a$ with
constant $K_0$.
Since $M$ is compact, the original metric $d$ and the adapted metric $d_a$ are bi-Lipschitz equivalent. 
Adjusting the constants $\delta$ and $K_0$ by the bi-Lipschitz equivalence constants immediately yields \eqref{assumptionb} for the metric $d$, 
thus completing the proof.
\end{proof}

Next we consider Lemma~\ref{disexpanding}, 
which is essentially a one-sided version of Lemma~\ref{A(b)}: since $f|_{J(f)}$ is expanding (with no stable
subbundle), only the expanding half of the hyperbolic splitting
argument is required, and the proof proceeds by a straightforward
adaptation of the argument above.
We therefore give only a sketch, referring the reader to the proof of
Lemma~\ref{A(b)} for the full details of the analogous steps.

\begin{proof}[\bf Proof of Lemma~\ref{disexpanding}]
We first show that the restriction $f|_{J(f)}\: J(f) \to J(f)$ is an open map. Since $\abs{\mathrm{D}f} > \lambda > 0$ on $J(f)$, there exists an open 
neighbourhood $V \subseteq \widehat{\C}$ of the Julia set on which the derivative is nonvanishing; hence, the restriction $f|_V : V \to \widehat{\C}$ 
is an open map. The complete invariance of the Julia set, specifically the fact that $f^{-1}(J(f)) = J(f)$ (see \cite[Lemma~4.3]{Mi06}), 
ensures that for each open set $U \subseteq V$, we have $f(U \cap J(f)) = f(U) \cap J(f)$. Because $f(U)$ is open in 
$\widehat{\C}$, its intersection with $J(f)$ is open in the subspace topology, thus $f|_{J(f)}$ is an open map.
	
	It remains to prove the second assertion.
	Let $\exp$ be the exponential map induced by $\Hseminorm{}{\,\cdot\,}$. 
    Firstly, there exists $\eta_0>0$ such that $\exp_x \: T_x \widehat{\C}(\eta_0) \to \overline{B(x,\eta_0)}$ is a homeomorphism and $d(x,\exp_x(v)) = \Hseminorm{}{v}$ for every $x\in \widehat{\C}$ and every $v\in T_x \widehat{\C}(\eta_0)$. 
    For $g\in \cH\cR^m$, $x\in \widehat{\C}$, and $v\in T_x \widehat{\C}(\eta_0/\LIP(g))$, define $F_g(x,v) \= \bigl( \exp_{g(x)}^{-1} \circ g \circ \exp_x \bigr) (v)$, where $\exp_{g(x)}^{-1}$ denotes the inverse of $\exp_{g(x)}|_{T_{g(x)} \widehat{\C}(\eta_0)}$. 
    Then define $\phi_g \= F_g - Tg$, where $Tg$ is the tangent map of $g$. 
	
	Since $\mathrm{D} g(x)$ depends continuously on $g \in \cH \cR^m$ and $x\in \widehat{\C}$, and the spaces $\widehat{\C}$ and $J(f)$ are compact, there exist constants $\delta>0$, $\theta_0 >1$, and a neighbourhood $N_0$ of $f$ such that $\LIP(g) \le 2\LIP(f)$ and $\Hseminorm{}{\mathrm{D}g(x)} > \theta_0$ for every $g\in N_0$ and every $x\in B(J(f),\delta)$. 
    
    Then, using Lemma~\ref{A(a)hr}, there exists a neighbourhood $N_1$ of $f$ such that if $g\in N_1$ then $J(g) \in B(J(f), \delta)$. 
    
    Next, given $\ve \in (0,\theta_0-1)$, by \cite[Lemma~4.11]{Wen16}, there exists a neighbourhood $N_2$ of $f$ and $\delta_0>0$ such that if $g\in N_2$ then $\LIP_{v,\delta_0}^{\Hseminorm{}{\cdot}}(\phi_g) < \ve$ on $T \widehat{\C}(\delta_0)$.\footnote{The statement of \cite[Lemma~4.11]{Wen16} is given for diffeomorphisms, but an inspection of its proof shows that the argument carries over unchanged to local diffeomorphisms, and in particular to hyperbolic rational maps.} 
    
    Finally, defining $\eta \= \min\{\eta_0/(2\LIP(f)), \, \delta_0\}$, $\theta \= \theta_0 - \ve>1$, and $N \= N_0 \cap N_1 \cap N_2$, if  $g\in N$ and distinct points $x, \,  y\in J(g)$ with $d(x,y)< \eta$, then we have
	\begin{equation*}
		\begin{aligned}
			d(g(x) , g(y)) 
            &= \Absbig{\exp_{g(x)}^{-1}(g(y))} 
            = \Absbig{F_g\bigl(\exp_{x}^{-1}(y)\bigr)} 
            \ge \Absbig{Tg\bigl(\exp_{x}^{-1}(y)\bigr)}- \Absbig{\phi_g\bigl(\exp_{x}^{-1}(y)\bigr)} \\
			&\ge \theta \Absbig{\exp_{x}^{-1}(y)} 
            = \theta d(x,y),
		\end{aligned}
	\end{equation*}  
	and the result follows.
\end{proof}

Next, we consider Lemmas~\ref{extension} and~\ref{exnormbound}, concerning extensions of $C^r$ interval maps. 
Before proving these lemmas, we first describe the construction of such extensions.

Consider $r \in \N$, $a < b$, and $f \in C^r([a,b],\R)$. 
To construct an extension $F \in C^r(\R,\R)$, it suffices to find two functions $g \in C^r((-\infty,a],\R)$ and $h \in C^r([b,+\infty),\R)$ such that for each integer $0 \le i \le r$,
\begin{equation*}
	g^{(i)}(a) = f^{(i)}(a) \text{ and } h^{(i)}(b) = f^{(i)}(b) ,
\end{equation*}
where $f^{(i)}$ denotes the $i$-th derivative of $f$;
then defining $F$ by $F = f$ on $[a,b]$, $F = g$ on $(-\infty,a)$, and $F = h$ on $(b,+\infty)$ yields the desired extension.

In what follows, we restrict our attention to functions $g \in C^r((-\infty,a],\R)$ and $h \in C^r([b,+\infty),\R)$ of the following form: for each $k \in \R$, define
\begin{align}
g_k(x) &= \sum_{i=0}^{r} \frac{f^{(i)}(a)}{i!}(x-a)^i + \frac{(-1)^{r+1}k}{(r+1)!}(x-a)^{r+1}, \label{f-k}\\
h_k(x) &= \sum_{i=0}^{r} \frac{f^{(i)}(b)}{i!}(x-b)^i + \frac{k}{(r+1)!}(x-b)^{r+1}. \label{f+k}
\end{align}
Clearly, for each $k \in \R$ and each $i \in \{0,\,1,\,\dots,\,r\}$, we have $g_k^{(i)}(a) = f^{(i)}(a)$ and $h_k^{(i)}(b) = f^{(i)}(b)$, so $g_k$ and $h_k$ satisfy the required matching conditions. 
Moreover, for every $x \in (-\infty,a]$, the function $g_k(x)$ is linear and monotone increasing with respect to $k$, and for every $x \in [b,+\infty)$, the function $h_k(x)$ is also linear and monotone increasing with respect to $k$.

\begin{proof}[\bf Proof of Lemma~\ref{extension}]
Clearly $\tau \in (0, \min \{ f(a)-a,\, b-f(b)\})$. 

\smallskip
\emph{Claim A.} If $f'(b) > 0$, then there exist $l_+ \in \mathbb{R}$ and $b_1 > b$ such that $h_{l_+}' > 0$ on $[b,+\infty)$, $h_{l_+}' > 2$ on $(b_1,+\infty)$, and $h_{l_+}(b_1) = b - \tau$.
\smallskip

\emph{Proof of Claim A.} Since $h_0'(b) = f'(b) > 0$ and $h_0'$ is continuous, there exists $\theta > 0$ such that $h_0' > 0$ on $[b, b+\theta)$. Hence, for all $k \in [0,+\infty)$ and $x \in [b, b+\theta)$, it follows from (\ref{f+k}) that
\begin{equation}\label{eq:hprime-positive}
h_k'(x) = h_0'(x) + \frac{k}{r!}(x-b)^r > 0.
\end{equation}
For each $w \in (0,\theta)$, define
\begin{equation}\label{eq:kw}
k_w \= \frac{r!}{ w^r }\biggl( \Hnorm{C^r }{f} \sum\limits_{i=0}^{r-1} \frac{w^i}{i!} + 2 \biggr) > 0.
\end{equation}
If $x > b + w$, a direct estimation using (\ref{f+k}) and (\ref{eq:kw}) gives
\begin{align}
h_{k_w}'(x)
&\ge \frac{k_w}{r!}(x-b)^r - \Hnorm{C^r }{f} \sum\limits_{i=0}^{r-1} \frac{(x-b)^i}{i!} 
\ge \frac{ 2 (x-b)^r}{w^r}+
\Hnorm{C^r}{f} \sum\limits_{i=0}^{r-1} \biggl(  \frac{(x-b)^r w^i}{i! w^r} - \frac{(x-b)^i}{i!} \biggr) \notag \\
&> 2 + \Hnorm{C^r}{f} \sum\limits_{i=0}^{r-1} \frac{(x-b)^i}{i!} \biggl(\frac{(x-b)^{r-i}}{ w^{r-i}} - 1 \biggr) 
 \ge 2.\label{eq:derivative-large}
\end{align}
Next, since $\lim_{w \to 0} h_0(b+w) = f(b)$ and, by (\ref{eq:kw}), $\lim_{w \to 0} k_w w^{r+1} = 0$, it follows from (\ref{f+k}) that
\begin{equation}\label{eq:limit}
\lim\limits_{w \to 0} h_{k_w}(b+w) 
= \lim\limits_{w \to 0} \Bigl( h_0(b+w) + \frac{k_w}{(r+1)!} w^{r+1} \Bigr) = f(b)<b-\tau.
\end{equation}
Hence, there exists $w_0 \in (0,\theta)$ such that $h_{k_{w_0}}(b+w_0) < b - \tau$. 
By (\ref{eq:derivative-large}), there exists $b_1 > b + w_0$ such that $h_{k_{w_0}}(b_1) = b - \tau$.
Define $l_+ \= k_{w_0}$. 
Combining (\ref{eq:hprime-positive}), (\ref{eq:derivative-large}), and the fact that $w_0<\theta$, we have $h_{l_+}' > 0$ on $[b,+\infty)$. 
Combining (\ref{eq:derivative-large}) and the fact that $b_1>b+w_0$ gives $h_{l_+}' > 2$ on $(b_1,+\infty)$, 
thus completing the proof of Claim~A.

Arguing similarly, the following three claims can also be established.
	
	\smallskip
	\emph{Claim B.} If $f'(b) <0$, then there exist $l_+ \in \R$ and $b_1>b$ such that $h_{l_+}' < 0$ on $[b,+\infty)$, $h_{l_+}' < -2$ on $(b_1,+\infty)$, and $h_{l_+}(b_1) = a+\tau$.
	
	\smallskip
	\emph{Claim C.} If $f'(a) >0$, then there exist $l_- \in \R$ and $a_1<a$ such that $g_{l_-}' > 0$ on $(-\infty, a]$, $g_{l_-}' > 2$ on $(-\infty, a_1)$, and $g_{l_-}(a_1) = a +\tau$.
	
	\smallskip
	\emph{Claim D.} If $f'(a) <0$, then there exist $l_- \in \R$ and $a_1<a$ such that $g_{l_-}' < 0$ on $(-\infty, a]$, $g_{l_-}' < -2$ on $(-\infty, a_1)$, and $g_{l_-}(a_1) = b-\tau$.
	
\smallskip

Since $f'(a) \neq 0 \neq f'(b)$, combining Claims~A, B, C, and~D gives the following claim:

\smallskip
\emph{Claim E.} There exist $l_+ \in \R$, $l_- \in \R$, $a_1<a$, and $b_1>b$ such that $\Absbig{h_{l_+}'} \neq 0$ on $[b,+\infty)$, $\Absbig{g_{l_-}'} \neq 0$ on $(-\infty,a]$, $\Absbig{h_{l_+}'} >2$ on $(b_1,+\infty)$, $\Absbig{g_{l_-}'} > 2$ on $(-\infty,a_1)$, and $h_{l_+} (b_1), \, g_{l_-} (a_1) \in \{a+\tau,\, b-\tau\}$.

\medskip

Let $l_+ \in \R$, $l_- \in \R$, $b_1>b$, and $a_1 < a$ be as in Claim~E.
Suppose $a_0$ and $b_0$ have been chosen such that 
$a_0 < a_1$ and $b_0 > b_1$. Define a function $F \: [a_0,b_0] \to \R$ by
\begin{equation}\label{F}
F(x) \=
\begin{cases}
g_{l_-}(x) & \text{ if } x \in [a_0,a),\\
f(x) & \text{ if }  x \in M,\\
h_{l_+}(x) &  \text{ if } x \in (b,b_0].
\end{cases}
\end{equation}
By construction, $F \in C^r([a_0,b_0], \R)$ and $F|_M = f$. 
Moreover, since $g_{l_-}' \neq 0$ on $(-\infty,a]$, and $h_{l_+}' \neq 0$ on $[b,+\infty)$, the function $F$ 
is such that $F'(x) \neq 0$
for all $x\in [a_0,a] \cup [b,b_0]$, 
i.e.,~the required property~(ii) of the lemma is satisfied. 

We now show that property~(iii) is also satisfied.
Since $\tau  \in (0, \min \{ f(a)-a, \, b-f(b)\} )$, then $f(b) \in (a+\tau, b-\tau)$, and by 
property~(ii), the function $h_{l_+}$ is monotone. 
Since $h_{l_+}(b_1)$ is equal to either $a+\tau$ or $b-\tau$, by Claim~E, it follows that
\begin{equation*}
	h_{l_+}((b_1,b_0])\cap[a+\tau,b-\tau]=\emptyset \quad\text{ and }\quad h_{l_+}([b,b_1])\subseteq[a+\tau,b-\tau].
\end{equation*}
Moreover, Claim~E also gives that $\Absbig{h_{l_+}'}> 2$ on $(b_1,b_0]$.
Arguing similarly, we also see that
\begin{equation*}
	g_{l_-}([a_0,a_1))\cap[a+\tau,b-\tau]=\emptyset, \quad 
    g_{l_-}([a_1,a])\subseteq[a+\tau,b-\tau],
\end{equation*}
and $\abs{g_{l_-}'} > 2$ on $[a_0,a_1)$, so $F$ does indeed satisfy property~(iii).	

\smallskip

Note that property~(i) 
(i.e.,~ $\{F(a_0),\, F(b_0) \} \subseteq \{a_0, \, b_0\}$ and $F|_{M} = f$) is satisfied
if additionally
\begin{equation}\label{a0b0_additional}
	\{g_{l_-} (a_0),\, h_{l_+} (b_0)\}\subseteq \{a_0, \, b_0 \}.
\end{equation}
Moreover, property~(iii) then implies that $F([a_1,b_1]) \subseteq M$, and $F$ is monotone on $[a_0,a_1]$ and $[b_1,b_0]$.
If $\max\{ F(x) : x\in[a_0,b_0] \}> b_0$, then the maximum is attained at $a_0$ or $b_0$, contradicting the above assumption. Similarly, $\min\{ F(x) : x\in[a_0,b_0]\} \ge a_0$. 
Hence,
$F([a_0,b_0])\subseteq[a_0,b_0]$,
and thus $F \in C^r_0([a_0,b_0],[a_0,b_0])$.
So it remains to show that $a_0 < a_1$ and $b_0 > b_1$ can be chosen such that (\ref{a0b0_additional}) holds.

For this we distinguish four cases according to the
possible signs of $f'(a)$ and $f'(b)$.
	
	\smallskip
	\emph{Case~1.} Assume that $f'(a) >0$ and $f'(b) >0$. 	
Using Claim~E and the assumption $f'(b) >0$, we have $h_{l_+}(b_1) < b_1$ and $h_{l_+}' > 2$ on $(b_1,+\infty)$. 
Hence $x \mapsto h_{l_+}(x) - x$ is strictly increasing with derivative greater than $1$, and is negative at $x=b_1$. 
Hence, there exists $b_0 > b_1$ such that $h_{l_+}(b_0) = b_0$.
Similarly, there exists $a_0 < a_1$ such that $g_{l_-}(a_0) = a_0$.

	\smallskip

	\emph{Case~2.} Assume that $f'(a) <0$ and $f'(b) >0$. 	
	Using Claim~E and the assumption that $f'(b)>0$, the same argument as in Case 1 yields $b_0 > b_1$ such that $h_{l_+}(b_0)=b_0$.
    Moreover, using Claim~E and the assumption that $f'(a)<0$, the function $g_{l_-}$ is strictly decreasing on $(-\infty,a]$ with $g_{l_-}'<-2$ on $(-\infty,a_1)$; 
    in particular, $g_{l_-}(x)\to+\infty$ as $x\to-\infty$. 
    Since $g_{l_-}(a_1)  < b < b_0$, the intermediate value theorem yields 
$a_0 < a_1$ such that $g_{l_-}(a_0) = b_0$.

	\smallskip

	\emph{Case~3.} Assume that $f'(a) >0$ and $f'(b) <0$. 	
	This case is analogous to Case 2 and is omitted.
	
	\smallskip
	\emph{Case~4.} Assume that $f'(a) <0$ and $f'(b) <0$. Using Claim~E and our assumptions $f'(a)<0$ and $f'(b)<0$, we have $g_{l_-}' <0$ on $(-\infty,a]$ and $h_{l_+}'<0$ on $[b,+\infty)$. Then we seek $a_0 < a_1$ and $b_0 > b_1$ such that
\begin{equation*}
	g_{l_-}(a_0) = b_0 \text{ and } h_{l_+}(b_0) = a_0.
\end{equation*}
Set $p \= b_1 - a_1$, $a_2 \= a_1 - p$, and $b_2 \= b_1 + p$. By the derivative bounds, one checks that
\begin{equation*}
	g_{l_-}(a_2) >b_2 \text{ and } h_{l_+}(b_2) < a_2.
\end{equation*}
More precisely, since $g_{l_-}(a_1) > a$ and $g_{l_-}' < -2$ on $[a_2,a_1]$, we have $g_{l_-} (a_2) > 2p + a > p+b_1 = b_2$. 
Similarly, $h_{l_+}(b_2) < a_2$.

Define
$H(x,y)\=\abs{g_{l_-}(x)-y}+\abs{h_{l_+}(y)-x}$
on $[a_2,a_1]\times[b_1,b_2]$. 

Let $\mathrm{D}_x^- H$ and $\mathrm{D}_x^+ H$ respectively denote the left and right partial derivatives of $H$ with respect to the first variable, 
and let $\mathrm{D}_y^- H$ and $\mathrm{D}_y^+ H$ 
respectively denote the left and right partial derivatives of $H$ with respect to the second variable. 
Let $a_* \in [a_2,a_1]$ and $b_* \in [b_1,b_2]$ be such that $H(a_*,b_*) = \min\{ H(x,y) : x\in [a_2,a_1], \, y\in [b_1,b_2] \}$. 
If $a_* = a_1$, the minimality of $H(a_*,b_*)$ implies $\mathrm{D}_x^- H(a_*,b_*) \le 0$. 
However, since $g_{l_-}(a_1) <b < b_*$, we have 
$\mathrm{D}_x^- H(a_*,b_*) \ge -g_{l_-}'(a_1) -1 >1> 0$,
which leads to a contradiction. 
If $a_*=a_2$, the minimality of $H(a_*,b_*)$ implies $\mathrm{D}_x^+ H(a_*,b_*) \ge 0$. 
 However, since $g_{l_-}(a_2) > b_2\ge b_*$, we have 
 $\mathrm{D}_x^+ H(a_*,b_*) \le 
    g_{l_-}'(a_2) +1 
    < -1 < 0$, 
which leads to a contradiction. 
Thus, $a_* \in (a_2,a_1)$. 
Similarly, we have $b_* \in (b_1,b_2)$. 
Now assume that $g_{l_-}(a_*) \neq b_*$, by the minimality of $H(a_*,b_*)$, 
we have $\mathrm{D}^+_x H(a_*,b_*) \ge 0$ and $\mathrm{D}_x^- H (a_*,b_*) \le 0$. 
However, if $g_{l_-}(a_*) < b_*$, 
then $\mathrm{D}_x^+ H(a_*,b_*) \ge - g_{l_-}'(a_*) -1>1>0$ and $\mathrm{D}_x^- H(a_*,b_*) \ge - g_{l_-}'(a_*) -1>1>0$, which leads to a contradiction. 
If $g_{l_-}(a_*) > b_*$, 
then $\mathrm{D}_x^+ H(a_*,b_*) \le g_{l_-}'(a_*) +1 < -1 <0$ and $\mathrm{D}_x^- H(a_*,b_*) \le g_{l_-}'(a_*) +1 < -1 <0$, which leads to a contradiction. 
Thus $g_{l_-}(a_*) =b_*$. Similarly, $h_{l_+}(b_*) =a_*$. This completes the proof.
\end{proof}

\begin{proof}[\bf Proof of Lemma~\ref{exnormbound}]
We first control the left extension. 
For $k \in \R$, recall the candidate extension $g_{k}$ on $[a_0,a]$ defined (cf.~(\ref{f-k})) by
$$g_k(x) = \sum_{i=0}^{r} \frac{f^{(i)}(a)}{i!}(x-a)^i + \frac{(-1)^{r+1}k}{(r+1)!}(x-a)^{r+1}.$$ 
Define the parameter boundary
\begin{equation}\label{eq:kminus}
k_- \= \Hnorm{C^r,M}{f} (r+1)!(r+1)\bigl(1 + (a-a_0)^{-r-1}\bigr).
\end{equation}
Note that if $y>0$ then $\sum_{i=0}^{r} y^i \le (r+1)\max\{1,\, y^{r+1}\} \le (r+1)\bigl(1+y^{r+1}\bigr)$. Setting $y \= a-a_0$, we get from a direct estimate that
\begin{equation*} 
	\begin{aligned} 
		g_{k_-}(a_0) &\ge \frac{k_-}{(r+1)!} (a-a_0)^{r+1} - \sum_{i=0}^{r} \Absbig{f^{(i)} (a)} (a-a_0)^i \\ 
		&\ge \Hnorm{C^r,M}{f}(r+1) \bigl(1+ (a-a_0)^{r+1} \bigr) - \Hnorm{C^r,M}{f}\sum_{i=0}^{r}(a-a_0)^i  
		\ge 0. 
	\end{aligned} 
\end{equation*}
By a symmetric lower bound, $g_{-k_-}(a_0) \le 0$.
Since $g_k(a_0)$ is continuous (indeed affine) with respect to $k$, the intermediate value theorem guarantees the existence of some $k_1 \in [-k_-,k_-]$ such that $g_{k_1}(a_0)=0$.
Next we estimate the $C^r$-norm of $g_{k_1}$ on $[a_0,a]$. A straightforward computation yields, for each $0\le i\le r$,
\begin{equation*}
	g_{k_1}^{(i)}(x) = \sum_{j=0}^{r-i} \frac{f^{(i+j)}(a)}{j!} (x-a)^j + \frac{(-1)^{r+1}k_1}{(r+1-i)!} (x-a)^{r+1-i}.
\end{equation*}
Since $\abs{k_1}\le k_-$ and
$k_-$ is proportional to $\Hnorm{C^r,M}{f}$, with a proportionality factor depending only on $r$ and $a-a_0$ 
(cf.~(\ref{eq:kminus})), 
each derivative $\Absbig{g_{k_1}^{(i)}(x)}$ is bounded by a constant multiple of $\Hnorm{C^r,M}{f}$. Maximizing these over $0 \le i \le r$
gives a uniform bound
	\begin{equation*}
		\Hnorm{C^r,[a_0,a]}{g_{k_1}} \le C_- \Hnorm{C^r,M}{f},
	\end{equation*}
for some constant $C_- \ge 1$ depending only on $r$ and $a-a_0$.

An entirely analogous argument on the right side yields an extension
$h_{k_2}$ on $[b,b_0]$ such that $h_{k_2}(b_0) = 0$ and $\Hnorm{C^r,[b,b_0]}{h_{k_2}} \le C_+ \Hnorm{C^r,M}{f}$ for some constant $C_+ \ge 1$ depending only on $r$ and $b_0-b$.
Finally, define the glued function $F$ 
on $M_0 = [a_0, b_0]$ by 
\begin{equation*}
	F(x) = \begin{cases}
		g_{k_1}(x) &\text{ if } x \in [a_0,a), \\
		f(x) &\text{ if } x \in M, \\
		h_{k_2}(x) &\text{ if } x \in (b,b_0],
	\end{cases}
\end{equation*}
where the matching of the first $r$ derivatives, at both points $a$ and $b$, ensures that $F$ is $C^r$ on $M_0$.
Note that $F(a_0)=g_{k_1}(a_0)=0$
and $F(b_0)=h_{k_2}(b_0)=0$,
and moreover
\begin{equation*}
	\Hnorm{C^r,M_0}{F}\le \Hnorm{C^r,[a_0,a]}{g_{k_1}} + \Hnorm{C^r,M}{f} + \Hnorm{C^r,[b,b_0]}{h_{k_2}} \le D_r \Hnorm{C^r,M}{f},
\end{equation*}
where $D_r \= 1 + C_- + C_+$ depends only on $r$ and the lengths of the intervals.
This completes the proof.
\end{proof}

\end{document}